\let\oldproofname=\proofname
\renewcommand{\proofname}{\rm\bf{\oldproofname}}
\renewcommand{\emph}[1]{{\rm\bf{#1}}}
\newtheorem{theorem}{Theorem}[section]
\newtheorem{lemma}[theorem]{Lemma}
\newtheorem{proposition}[theorem]{Proposition}
\newtheorem{corollary}[theorem]{Corollary}
\newtheorem{assumption}[theorem]{Assumption}
\theoremstyle{definition}
\newtheorem{definition}[theorem]{Definition}
\newtheorem{remark}[theorem]{Remark}
\numberwithin{equation}{section}
\newcommand{\loc}{\mathrm{loc}}
\DeclareMathOperator{\e}{e}
\DeclareMathOperator{\Div}{div}
\DeclareMathOperator{\sgn}{sgn}
\DeclareMathOperator{\dist}{dist}
\DeclareMathOperator{\cc}{c}
\newcommand{\IL}{\mathbb{L}}
\DeclareMathOperator{\tr}{Tr}
\newcommand{\ext}{\mathrm{E}}
\newcommand{\energy}{\mathscr{E}}
\newcommand{\1}{\boldsymbol{1}}
\newcommand{\C}{\mathbb{C}}
\newcommand{\N}{\mathbb{N}}
\newcommand{\R}{\mathbb{R}}
\newcommand{\smooth}[1][]{\rC_{\cc}^{\infty}(#1)}
\newcommand{\smoothD}[1][]{\rC_{D}^{\infty}(#1)}
\renewcommand{\H}{\mathrm{H}}
\newcommand{\W}{\mathrm{W}}
\renewcommand{\L}{\mathrm{L}}
\DeclareMathOperator{\D}{dom}
\DeclareMathOperator{\Ker}{ker}
\renewcommand{\d}{\mathrm{d}}
\DeclareMathOperator{\esssup}{esssup}
\DeclareMathOperator{\essinf}{essinf}
\newcommand{\glob}{\mathrm{glob}}
\newcommand{\inter}{\mathrm{int}}
\renewcommand{\S}{\mathrm{S}}
\renewcommand{\i}{\mathrm{i}}
\DeclareMathOperator{\im}{Im}
\DeclareMathOperator{\re}{Re}
\newcommand{\rB}{\mathrm{B}}
\newcommand{\rC}{\mathrm{C}}
\newcommand{\cH}{\mathcal{H}}
\newcommand{\cL}{\mathcal{L}}
\newcommand{\LL}{\mathscr{L}}
\newcommand{\sub}{\subseteq}
\def\Xint#1{\mathchoice
	{\XXint\displaystyle\textstyle{#1}}%
	{\XXint\textstyle\scriptstyle{#1}}%
	{\XXint\scriptstyle\scriptscriptstyle{#1}}%
	{\XXint\scriptscriptstyle%
		\scriptscriptstyle{#1}}%
	\!\int}
\def\XXint#1#2#3{{\setbox0=\hbox{$#1{#2#3}{%
				\int}$ }
		\vcenter{\hbox{$#2#3$ }}\kern-.6\wd0}}
\def\barint{\,\Xint-} 
\title[Functional calculus and dynamical boundary conditions]{Bounded functional calculus for divergence form operators with dynamical boundary conditions}
\author{Tim B\"ohnlein}
\author{Moritz Egert}
\email{boehnlein@mathematik.tu-darmstadt.de, egert@mathematik.tu-darmstadt.de}
\address{Fachbereich Mathematik, Technische Universit\"at Darmstadt, Schlossgartenstr.~7, 64289 Darmstadt, Germany}
\author{Joachim Rehberg}
\email{joachim.rehberg@wias-berlin.de}
\address{Weierstrass Institut, Mohrenstr.~39, 10117 Berlin, Germany}
\subjclass[2020]{Primary: 35J25, 47F10. Secondary: 35B65, 46E35.}
\date{\today}
\dedicatory{}
\keywords{dynamical boundary conditions, maximal parabolic regularity, $p$-ellipticity, bounded $\H^{\infty}$-calculus, bilinear embedding, trace theorems, Bellmann function}
\begin{document}
	\begin{abstract}
		We consider divergence form operators with complex coefficients on an open subset of Euclidean space. Boundary conditions in the corresponding parabolic problem are dynamical, that is, the time derivative appears on the boundary. As a matter of fact, the elliptic operator and its semigroup act simultaneously in the interior and on the boundary. We show that the elliptic operator has a bounded $\H^\infty$-calculus in $\L^p$ if the coefficients satisfy a $p$-adapted ellipticity condition. A major challenge in the proof is that different parts of the spatial domain of the operator have different dimensions. Our strategy relies on extending a contractivity criterion due to Nittka and a non-linear heat flow method recently popularized by Carbonaro--Dragičević to our setting. 
	\end{abstract}
	
	\maketitle
	
	\section{Introduction}
	This paper is dedicated to elliptic operators in divergence form along with their associated parabolic problems, which are commonly encountered in science. These scenarios frequently pose challenges due to discontinuous coefficient functions and singular objects on the right-hand side, which reside on sets with Hausdorff dimension smaller than the spatial dimension. It is widely recognized, e.g.\ in the theory of electricity (see the monograph of the Nobel prize winner I.~Tamm \cite[Chap.~1.4]{Tamm_book}) that (spatial) jumps in the coefficient function are intimately connected to the presence of surface densities on the right-hand side. 

    \subsection{The parabolic equation}
	First, let us give a formal description of the linear parabolic problem with dynamical boundary conditions that we have in mind. An excellent exposition for the derivation of such equations in sciences can be found in~\cite{Goldstein_Derivation}. In dimension $d \geq 2$ we let $O \sub \R^d$ be open, $D \sub \partial O$, $\Sigma \sub \overline{O} \setminus D$, $T \in (0, \infty]$ and consider the system
	\begin{align} \label{eq: a-formalGleichung}
		\partial_t u - \Div (A \nabla u) &= f|_{O \setminus \Sigma} &\text{in} \; (0, T) \times (O \setminus \Sigma), \nonumber \\
		\partial_t u + \nu \cdot A \nabla u &= f|_{\Sigma \cap \partial O} &\text{on} \; (0, T) \times (\Sigma \cap \partial O), \nonumber \\
		\partial_t u + \nu_{\Sigma \cap O} \cdot A \nabla u &= f|_{\Sigma \cap O} &\text{on} \; (0, T) \times (\Sigma \cap O), \\
		u &=0 &\text{on} \; (0, T) \times D, \nonumber \\
		\nu \cdot A \nabla u &= 0 &\text{on} \; (0, T) \times \partial O \setminus (D \cup \Sigma), \nonumber \\
		u(0) &= u_0 &\text{in} \; O \cup \Sigma. \nonumber
	\end{align}
	Here, $A$ is a \textbf{uniformly strongly elliptic} coefficient function with complex, bounded and measurable entries, that is,
	\begin{equation} \label{eq: Intro: ellipticity}
		\lambda(A) \coloneqq \underset{x \in O}{\essinf} \min_{|\xi| =1} \re ( A(x) \xi \cdot \overline{\xi}) > 0 \quad \& \quad \Lambda(A) \coloneqq \underset{x \in O}{\esssup} |A(x)| < \infty, 
	\end{equation}
	the vector $\nu$ is the outer unit normal, $\nu_{\Sigma \cap O}$ denotes a `jump' in the normal derivative on $\Sigma \cap O$, and $f, u_0$ are functions defined on $O \cup \Sigma$. Hence, the underlying set for the dynamics is $O \cup \Sigma$, where the `volume' $O$ is equipped with the $d$-dimensional Lebesgue measure, but the `surface' $\Sigma$ is a Lebesgue null set and carries a different Radon measure $m$. In science, $\Sigma$ would typically be a Lipschitz surface with $(d-1)$-dimensional Hausdorff measure, but our mathematical treatment allows it to be as wild as the von Koch snowflake with its natural measure of fractal dimension, or of Hausdorff co-dimension larger than $1$.

    \subsection{The variational setting}
	Following \cite{Arendt_Sauter_Wentzell_Laplace_and_Appr_Trace, Disser-Meyries-Rehberg_Parabolic, EMR-DBC, Nittka}, we model \eqref{eq: a-formalGleichung} as an abstract Cauchy problem 
	\begin{align}
		\begin{split}  \label{eq: Intro: ACP in L2}
			u'(t) + \LL^A u(t) &= f(t) \qquad (t \in (0, T)),
			\\ u(0) &= u_0, 
		\end{split}
	\end{align}
	in the natural $\L^2$-space $\IL^2 \coloneqq \L^2(O \cup \Sigma, \d x \oplus \d m)$ that takes the two dynamical parts into account. This can efficiently be done by an extension of the form method
	due to Arendt--ter Elst~\cite{Arendt_Elst_j-elliptic_forms}, see Section~\ref{Sec: L2-theory} for details. In this construction, $\LL^A$ is associated with the usual sesquilinear form for divergence form operators, 
	\begin{align} \label{eq: Intro: a}
		a\colon V \times V \to \C, \quad a(u,v) \coloneqq \int_O A \nabla u \cdot \overline{\nabla v} \, \d x,
	\end{align}
	where $V = \W^{1,2}_D(O)$ is a Sobolev space that models the homogeneous Dirichlet condition on $D$ in the fourth line of \eqref{eq: a-formalGleichung}. However, $V$ is not considered as a subspace of $\L^2(O)$ but of $\IL^2$ through the identification operator $j(u) = u|_O \oplus \tr(u)$ in order to account for the dynamics on $\Sigma$, compare with \cite[Sec.~1]{EMR-DBC}. To this end, we need minimal geometric assumptions that we describe next.

	\begin{assumption} \label{Ass: DBC}
		
		Throughout the entire paper, $d \geq 2$ and:
		
		\begin{enumerate}
			
			\item The set $O \sub \R^d$ is open and non-empty.
			
			\item The \emph{Dirichlet part} $D \sub \partial O$ is closed and possibly empty.
			
			\item\label{Ass: DBC: sigma} The \emph{dynamical part} $\Sigma \sub \overline{O} \setminus D$ is a non-empty Borel set with Lebesgue measure zero.
			
			\item\label{Ass: DBC: trace} The \emph{trace} to $\Sigma$, defined for $u \in V \cap \rC(\overline{O})$ by pointwise restriction
			\begin{align*}
				\tr(u) \coloneqq u|_{\Sigma},
			\end{align*}
			extends by density to a bounded linear operator from $V$ into $\L^2(\Sigma, \d m)$, where $m$ is a Radon measure on $\Sigma$.
		\end{enumerate}
	\end{assumption}
	
	\begin{remark} \label{Rem: Ass: DBC}
		We denote the extension of the trace map in Assumption~\ref{Ass: DBC}~\ref{Ass: DBC: trace} by the same symbol. As usual, a Radon measure is a Borel measure that is finite on (relative) compact sets, outer regular on Borel sets, and inner regular on (relative) open sets.
	\end{remark}

 	We refer to Section~\ref{Sec: Geometry} for explicit examples that we have in mind, including fractal sets $\Sigma$, but it is conceptually much simpler to stick to the more general assumption above.

    \subsection{Main results}
	
	Abstract theory of sectorial forms tells us that $\LL^A$ generates a holomorphic contraction semigroup in $\IL^2$. Thus, $\LL^A$ has a bounded $\H^\infty$-calculus in $\IL^2$ and notably exhibits maximal parabolic regularity, serving as a powerful tool for handling nonlinear versions of \eqref{eq: a-formalGleichung} via fixed-point methods, and even stochastic maximal regularity~\cite{vanNeerven}.

    The primary focus of this work is to elaborate whether these properties extend to $\IL^p \coloneqq \L^p(O \cup \Sigma, \d x \oplus \d m)$ for $p \neq 2$. For operators with \emph{real coefficients} this problem and related ones are extensively investigated in the full Lebesgue scale, see e.g.\ \cite{Disser-Meyries-Rehberg_Parabolic, EMR-DBC, Goldstein_DynBC, Hoemb_Control}. Their techniques do not carry over to the complex case, which, to the best of our knowledge, has  been an open problem. We settle the question in our main result:
	
	\begin{theorem} \label{Intro: Thm: Bounded FC in Lp}
		Let $p \in (1, \infty)$ and suppose that $A$ is $p$-elliptic. The semigroup $(\e^{-t \LL^A})_{t \geq 0}$ extends to an analytic contraction semigroup on $\IL^p$ of angle $\theta_p \in (0, \nicefrac{\pi}{2})$ defined in \eqref{eq: Intro: theta_p} and its generator $\LL_p^A$ has a bounded $\H^{\infty}$-calculus of angle $\nicefrac{\pi}{2} - \theta_p$. In particular, $\LL_p^A$ has maximal parabolic regularity.
	\end{theorem}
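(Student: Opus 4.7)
The strategy is the two-step program announced in the abstract, adapted to the mixed measure $\d x \oplus \d m$ on $O \cup \Sigma$.

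\emph{Step 1: contractivity on $\IL^p$.} First I would show that $(\e^{-t \LL^A})_{t \geq 0}$ extends to a contraction semigroup on $\IL^p$ by an adaptation of Nittka's invariance criterion to the $j$-elliptic form setting of Arendt--ter Elst. Let $P$ be the radial projection $z \mapsto z \min(1, |z|^{-1})$, acting pointwise on $\IL^2$, and let $\widetilde{P}$ be its lift along $j$ to $V$. The criterion reduces contractivity to two conditions: (i) $\widetilde{P} u \in V$ for every $u \in V$, and (ii) $\re a(u - \widetilde{P} u, \widetilde{P} u) \geq 0$. Condition~(i) holds because $\W^{1,2}_D(O)$ is stable under truncation/rescaling and because $P$ acts pointwise, so no incompatibility between interior values and trace arises. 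Condition~(ii) is, in the formulation of Cialdea--Maz'ya (made prominent by Carbonaro--Dragi\v{c}evi\'c), precisely the defining inequality of $p$-ellipticity.

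\emph{Step 2: bilinear embedding.} Once contractivity on $\IL^p$ is in hand, the sharp angle $\theta_p$ and the bounded $\H^\infty$-calculus are obtained via the heat-flow Bellman method. For $f \in \IL^p$ and $g \in \IL^{p'}$, put $u(t) = \e^{-t \LL^A} f$, $v(t) = \e^{-t \LL^{A^*}} g$, and consider
\begin{equation*}
	\Phi(t) \coloneqq \int_{O \cup \Sigma} Q_p(u(t), v(t)) \, \d(x \oplus m),
\end{equation*}
where $Q_p$ is the Nazarov--Treil Bellman function. Differentiating in $t$, one substitutes $\partial_t u = -\LL^A u$ and invokes the weak identity $a(u, \varphi) = \langle \LL^A u, \varphi \rangle_{\IL^2}$ for $\varphi \in V$. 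The key observation is that the $\Sigma$-contributions appear as the time derivative of the $\Sigma$-part of $\Phi$ itself and do not produce genuine boundary terms, because $a$ sees only $\nabla u$ on $O$ while $Q_p$ couples $O$ and $\Sigma$ only pointwise through $ju$ and $jv$. What survives is the interior integrand featuring the Hessian of $Q_p$ paired with $(A \nabla u, A^* \nabla v)$, which is pointwise non-negative by $p$-ellipticity and controls the bilinear quantity needed. Integration in $t$ and McIntosh's square function theorem then yield the bounded $\H^\infty$-calculus of angle $\nicefrac{\pi}{2} - \theta_p$; contractivity on the sector of opening $2 \theta_p$ comes in parallel by applying the same Bellman computation to the rotated semigroup.

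\emph{Main obstacle.} The principal difficulty lies in making the heat-flow calculation rigorous on the hybrid space. The chain rule and the duality pairing with $\LL^A$ involve a test function of the form $(\partial_{\bar u} Q_p(u,v), \partial_{\bar v} Q_p(u,v))$ which couples interior values and surface traces; one must show that this pairing indeed lies in $j(V)$ and that smooth approximation preserves enough regularity for the formal manipulations to be legitimate. This is precisely the point at which the construction of $\LL^A$ via $a$ and $j$, together with the trace hypothesis in Assumption~\ref{Ass: DBC}~\ref{Ass: DBC: trace} and suitable truncations of $Q_p$, must be exploited. Once the bilinear embedding is established, maximal parabolic regularity is automatic, since $\IL^p$ is a UMD space for $p \in (1, \infty)$ and a bounded $\H^\infty$-calculus of angle strictly below $\nicefrac{\pi}{2}$ implies maximal parabolic regularity by the Kalton--Weis theorem.
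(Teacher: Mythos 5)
Your overall strategy matches the paper's: (1) extend the semigroup to $\IL^p$ by an adaptation of Nittka's invariance criterion to the $j$-elliptic setting, (2) prove a bilinear embedding via the Bellman-function heat flow, (3) pass to a bounded $\H^\infty$-calculus through the Cowling--Doust--McIntosh--Yagi square-function theorem, and (4) deduce maximal regularity. You also correctly identify the key novelty as pushing the nonlinear heat-flow computation through the identification operator $j$ via smooth truncation of $Q_p$. However, Step~1 of your plan has a concrete gap.

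You take $P$ to be the pointwise radial projection $z \mapsto z \min(1, |z|^{-1})$. That is the orthogonal projection onto the $\L^\infty$-unit ball; verifying the Ouhabaz criterion with it yields $\L^\infty$-contractivity, not $\L^p$-contractivity. The $\IL^2$-orthogonal projection $P_p$ onto $B_p = \{u : \|u\|_p \leq 1\}$ is \emph{not} pointwise and has no explicit closed form; Nittka's insight is the implicit characterisation $P_p f = u$ with $f = u + t\,|u|^{p-2} u$ for unique $u \in B_p$ and $t \geq 0$. Only after this reformulation does Ouhabaz's condition~(ii) become the Cialdea--Maz'ya inequality $\re a(u,|u|^{p-2}u) \geq 0$, which is what $p$-ellipticity furnishes. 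With the radial projection, condition~(ii) is \emph{not} the $p$-ellipticity inequality, so your reduction would stall. The correct order is: use the radial projection together with the commutation lemma for $j$ (Lemma~\ref{Traces: Lem: j commutes!}) to show $j(V)$ is invariant under $P_\infty$, deduce invariance under all $P_q$ by interpolation, and then import Nittka's reformulation to turn condition~(ii) into the $p$-ellipticity test. A second, minor imprecision: the sector of contractivity of opening $2\theta_p$ is obtained by re-running this invariance argument with the rotated matrices $\e^{\pm\i\theta}A$ (since $\e^{-t\e^{\i\theta}\LL^A} = \e^{-t\LL^{\e^{\i\theta}A}}$), not by rotating the Bellman computation; the latter only delivers the bilinear embedding, hence the quadratic estimates, and not the semigroup bound.

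Your closing step (Kalton--Weis on the UMD space $\IL^p$ instead of Dore--Venni via bounded imaginary powers) is a valid alternative to the paper's route to maximal regularity.
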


	Above, \emph{$\boldsymbol{p}$-ellipticity} refers to an algebraic condition on the coefficients that originates from Cialdea--Maz'ya \cite{CM_p-elliptic}: 
	\begin{equation} \label{eq: Intro: p-ellipticity}
		\Delta_p(A) \coloneqq \underset{x \in O}{\essinf}  \min_{|\xi| =1} \re \Big( A(x) \xi \cdot \overline{(\xi + (1 - \nicefrac{2}{p}) \overline{\xi})} \Big)> 0.
	\end{equation}
	This notion was introduced by Carbonaro--Dragičević \cite{CD_JEMS} and independently by Dindoš--Pipher \cite{DP_p-elliptic}. It bridges between uniform strong ellipticity for complex matrices ($p=2$) and real matrices ($p=\infty$), compare with Section~\ref{Subsec: p-ellipticity}. Every uniformly strongly elliptic $A$ is $p$-elliptic in a range of $p$'s that depends on $\lambda(A), \Lambda(A)$ \cite[Cor.~3]{Egert_p-ellipticity}. Therefore, Theorem~\ref{Intro: Thm: Bounded FC in Lp} always applies in an $A$-dependent range of $p$'s. The angle above is
	\begin{equation} \label{eq: Intro: theta_p}
		\theta_p \coloneqq \sup \Big\{ \theta\in [0, \nicefrac{\pi}{2}) \colon \e^{\pm \i \theta} A \; \text{is} \; p\text{-elliptic} \Big\}
	\end{equation}
	and we address the problem of finding lower bounds for $\theta_p$ in terms of the `data' of $A$ in Section~\ref{Sec: Explicit angle}.
	
	Compared to many $\L^p$-extrapolation results related to elliptic operators \cite{Auscher_Memoirs, Bechtel_Lp, BK1, BK2, Böhnlein-Egert_Lp_estimates, Egert_p-ellipticity, P-ellipticity-Moritz-Counterpart, Shen_Lp_extrapolation}, there is no clear dimensional scaling on the spatial domain of our operators due to the presence of $\Sigma$. This seems to forbid any use of (generalized) kernel estimates on objects related to $\LL^A$. In proving Theorem~\ref{Intro: Thm: Bounded FC in Lp}, we had to watch out for methods that predominantly work on the level of the sesquilinear form $a$ in \eqref{eq: Intro: a} and not on the associated operator $\LL^A$, because the former one does not involve $\Sigma$. We found a suitable approach in the non-linear heat flow technique of Carbonaro--Dragičević \cite{CD_Bakry_estimate, CD_JEMS, CD_Open_Set, CD_Schrodinger, CD_Trilinear_embedding}. Largely inspired by their results, we first prove a \emph{bilinear embedding} in Section~\ref{Sec: BE}:
	
	\begin{theorem}  \label{Intro: Thm: BE}
		Let $p \in (1, \infty)$ and $A, B$ be $p$-elliptic. Then there is a constant $C > 0$ that depends only on $p$, $\lambda(A,B)$, $\Lambda(A,B)$ and $\Delta_p(A,B)$ such that
		\begin{equation}  \label{eq: BFC on open sets, Bilinear estimate}
			\int_0^{\infty} \int_{O} \big|\nabla (\e^{- t \LL^A}f ) |_O\big| \cdot \big|\nabla (\e^{-t \LL^B}g)|_O\big| \, \d x \, \d t \leq C \Vert f \Vert_p \Vert g \Vert_{p'} \qquad (f,g \in \IL^p \cap \IL^{p'}).
		\end{equation}
	\end{theorem}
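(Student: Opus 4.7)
The plan is to adapt the non-linear heat flow technique of Carbonaro--Dragičević \cite{CD_JEMS,CD_Bakry_estimate,CD_Open_Set} to the present dynamical setting. Set $u(t) \coloneqq \e^{-t \LL^A} f$ and $v(t) \coloneqq \e^{-t \LL^B} g$. Using analyticity on $\IL^2$ together with the $\IL^p$/$\IL^{p'}$-contractivity part of Theorem~\ref{Intro: Thm: Bounded FC in Lp} (applied to $A$ and to $B$, using that $p$-ellipticity is invariant under $p \leftrightarrow p'$), for each $t > 0$ these elements lie in $\D(\LL^A)$, $\D(\LL^B)$, and hence correspond through the $j$-identification to elements of $V = \W^{1,2}_D(O)$ with well-defined gradients on $O$ and traces on $\Sigma$.

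Next, I employ a suitably smoothed version of the Nazarov--Treil / Carbonaro--Dragičević Bellman function $Q = Q_p \colon \C \times \C \to [0, \infty)$ satisfying the majorisation $Q(\zeta, \omega) \leq C_1(|\zeta|^p + |\omega|^{p'})$ together with a generalised convexity condition tailored to joint $p$-ellipticity of matrix pairs,
\begin{equation*}
- \re \bigg\langle D^2 Q(\zeta, \omega) \bigl( A \xi,\, B \eta \bigr),\; (\xi, \eta) \bigg\rangle \geq c_0 \, |\xi|\,|\eta| \qquad (\zeta, \omega \in \C,\; \xi, \eta \in \C^d),
\end{equation*}
with $c_0$ depending only on $p$, $\lambda(A,B)$, $\Lambda(A,B)$ and $\Delta_p(A,B)$. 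This is precisely the object constructed in the cited works to encode $p$-ellipticity through infinitesimal dissipation.

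I then consider the energy
\begin{equation*}
\energy(t) \coloneqq \int_{O \cup \Sigma} Q\big(u(t), v(t)\big) \, \d(x \oplus m).
\end{equation*}
The decisive observation is that the form $a$ in \eqref{eq: Intro: a} integrates only over $O$. Consequently, differentiating $\energy$ in time, rewriting $\langle -\LL^A u, \partial_\zeta Q(u,v)\rangle_{\IL^2}$ via $a$ (and analogously for $B$), and applying the chain rule to $\partial_\zeta Q(u,v)$ and $\partial_\omega Q(u,v)$ viewed as $V$-valued test functions yields
\begin{equation*}
-\energy'(t) \geq c_0 \int_O |\nabla u(t)| \cdot |\nabla v(t)| \, \d x.
\end{equation*}
The boundary part $\Sigma$ only enters on the left through $\energy$ (enlarging the energy budget), but does not contaminate the gradient dissipation on the right -- exactly what the target estimate requires. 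Integrating in $t$ and using $Q \geq 0$,
\begin{equation*}
c_0 \int_0^\infty \int_O |\nabla u| \cdot |\nabla v| \, \d x \, \d t \leq \energy(0) \leq C_1 \big( \Vert f \Vert_p^p + \Vert g \Vert_{p'}^{p'} \big),
\end{equation*}
and the homogeneity rescaling $(f,g) \mapsto (\lambda f, \lambda^{-1} g)$ with optimal $\lambda$ converts the right-hand side into $C \Vert f \Vert_p \Vert g \Vert_{p'}$.

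The principal obstacle is the rigorous justification of the chain rule in the computation of $\energy'(t)$. This requires (i) regularising $Q_p$ so that it is globally $C^2$ (the only bad point being the origin); (ii) restricting initially to $f, g$ that are bounded and sufficiently regular, so that $u(t), v(t)$ stay bounded and belong to $V$; and (iii) establishing the composition rule that if $u, v \in V \cap \L^\infty$ then $\partial_\zeta Q(u,v) \in V$, its weak gradient on $O$ is given by the chain rule, and crucially its trace on $\Sigma$ equals $\partial_\zeta Q(\tr u, \tr v)$. This last compatibility underpins the $j$-identification and legitimises the form identity $\langle \LL^A u, \partial_\zeta Q(u,v)\rangle_{\IL^2} = a(u, \partial_\zeta Q(u,v))$. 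Once these technical points are settled by truncation and mollification, a standard density argument removes the auxiliary assumptions and yields \eqref{eq: BFC on open sets, Bilinear estimate}.
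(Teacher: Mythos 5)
Your overall strategy is the same as the paper's: run the Nazarov--Treil Bellman heat flow $\energy(t) = \int_{O\cup\Sigma} Q(u(t),v(t))\,\d\mu$, use that the form $a$ only sees $O$, invoke the Carbonaro--Dragičević generalised convexity (``$(A,B)$-convexity'') of $Q$ to bound $-\energy'(t)$ from below by $\int_O |\nabla u|\,|\nabla v|\,\d x$, integrate in time, and rescale. The paper also emphasizes, as you do, that the crucial new point compared with \cite{CD_Open_Set} is getting the nonlinear composition $\partial Q(u,v)$ to commute with the identification map $j$, i.e.\ compatibility of the interior chain rule with the trace on $\Sigma$.

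The gap is in your regularisation scheme, specifically steps (i) and (ii). You claim that the ``only bad point'' of $Q$ is the origin and that restricting to bounded $f,g$ makes $u(t),v(t)$ bounded. Both are problematic. First, $\partial_\zeta Q$ grows like $|\zeta|^{p-1}$, so for $p>2$ it fails to be globally Lipschitz at infinity; smoothing near the origin alone does not make the chain rule $\partial_\zeta Q(u,v)\in V$ legitimate for $u\in V$, and (crucially for the paper's Lemma~\ref{Traces: Lem: j commutes!}) the commutation with $j$ is only established there for \emph{globally} Lipschitz maps vanishing at $0$. Second, your fallback (ii) does not rescue this: even for $f,g\in\IL^\infty$, the semigroup $\e^{-t\LL^A}$ for complex $p$-elliptic $A$ is not $\IL^\infty$-bounded in general (Lemma~\ref{p-ellipticity: Lem: Basics}~\ref{p-ellipticity: Lem: Basics, A real} shows that $p$-ellipticity for all $p$ forces $A$ real), so $u(t),v(t)$ need not stay bounded. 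The paper circumvents both issues at once by using the two-parameter approximations $R_{n,\nu}=\psi_n(Q*\varphi_\nu)+P_{n,\nu}$ from \cite{CD_Open_Set}, whose first derivatives are globally Lipschitz with $(DR_{n,\nu})(0)=0$, so Lemma~\ref{Traces: Lem: j commutes!} applies uniformly in $f,g\in\IL^p\cap\IL^{p'}$ without any truncation of the data; the limits in $n$ and $\nu$ are then taken by dominated convergence and Fatou, respectively. If you replace your steps (i)--(ii) with precisely this global cut-off of $Q$ (rather than truncation of the data), the remainder of your outline is correct and coincides with the paper.
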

	
	Here, $p'= \nicefrac{p}{(p-1)}$ denotes the Hölder conjugate of $p$ and 
	\begin{equation*}
		\lambda(A,B) \coloneqq \lambda(A) \land \lambda(B), \quad \Lambda(A,B) \coloneqq \Lambda(A) \lor \Lambda(B) \quad \& \quad \Delta_p(A,B) \coloneqq \Delta_p(A) \land \Delta_p(B) 
	\end{equation*}
	are common ($p$-)ellipticty constants for $A$ and $B$. Theorem~\ref{Intro: Thm: BE} implies Theorem~\ref{Intro: Thm: Bounded FC in Lp} by standard quadratic estimates due to Cowling--Dust--McIntosh--Yagi \cite{CDMcIY_BFC}, see Section~\ref{Sec: Bounded FC in Lp}. However, this isn't the whole story, as the boundedness of the semigroup in $\IL^p$ is needed in the proof of Theorem~\ref{Intro: Thm: BE}. We will address this prerequisite beforehand in Section~\ref{Sec: Lp-contractivity} by providing a generalization of Nittka's invariance criterion \cite{Nittka} tailored to our needs.
	
	In both, the bilinear embedding and Nittka's criterion, the main novelty is the presence of the identification operator $j$. It poses new technical challenges that we resolve in this work. We make essential use of the particular choice of $j$, or more precisely, that it is injective and that $j$ as well as $j^{-1}$ commute with certain non-linear maps.

    \subsection{\texorpdfstring{Discussion of the $\boldsymbol{\L^p}$-setting}{}}

    Let us provide further motivation for considering the elliptic/parabolic operators in the particular spaces $\IL^p$.
     
	In the realm of mathematical semiconductor modeling, a widely studied model is the Van Roosbroeck system \cite{Markowich_book, Selberherr_book, Meinlschmidt-Rehberg_Van-Roosbroeck}. This system comprises a set of nonlinear drift-diffusion equations with surface charge densities on the right-hand sides. Mathematically, they are understood as measures concentrated on surfaces. In an advanced writing of the system \cite{Gajewski_Semiconductor}, dynamical boundary conditions emerge on parts of the boundary. Solving the system numerically is a nontrivial task and the only known successful algorithm, due to Scharfetter--Gummel, bases on two essentials. First, if the system is studied in some function space, then its dual has to contain indicator functions of subsets, such as boxes or tetrahedra, as ‘test functions’. This is true in $\IL^p$ and false in the more general distribution spaces used e.g.\ in \cite{Meinlschmidt-Rehberg_Van-Roosbroeck}. Second, once having tested the system with indicator functions on a partition of subdomains, a point balance across all subdomains needs to be established. Usually, this conversion involves transforming local volume integrals into surface integrals utilizing Gau\ss’ theorem \cite{Comi-Payne_Gauss}. While this method is effective when the flux's divergence is a measure, it fails when it is only a distribution. 

    However, already when dealing with \emph{semilinear} parabolic equations, a standard approach to handling right-hand sides $R$ that depend nonlinearly on the solution, is based on the fact that $R$ is (locally) Lipschitz continuous with respect to the solution. This holds when the solution space is equipped with the topology of an interpolation space between the underlying Banach space and the domain of the elliptic operator~\cite[Chap.~6]{Pazy_book}. For parabolic equations on $O$ it typically suffices to know that domains of fractional powers of the linear elliptic operator embed into $\L^\infty$ in order to catch the nonlinearities~\cite{Elst-Rehberg_Linfty-estimates-for-L} but in the case of the $\IL^p$ spaces involving two different measures, a pointwise uniform control is needed. 
        
    We address this issue in Section~\ref{Sec: Elliptic regularity for DBC}, where for large $p$ we establish embeddings of fractional power domains of $\LL_p^A$ into spaces of H\"older continuous functions on $O \cup \Sigma$ in various settings. These embeddings generalize results in \cite{Disser-Elst-Rehberg_Hölder, Disser-Meyries-Rehberg_Parabolic, EMR-DBC, H-J-K-R_Ell-Par-Reg-MBC, Hoppe-Meinlschmidt-Neitzel_QLparPDEs-MBC} for parabolic equations on $O$ and open the door for proving H\"older regularity for solutions of the parabolic equations simultaneously in space and time, which can be very useful in applications, see e.g.\ \cite{Betz_Optimal-Control, Casas-Yong_Optimal-control} and references therein. The proof relies on delicate yet known mapping properties of the sesquilinear form \eqref{eq: Intro: a}, which are independent of the dynamical boundary conditions, and a `transference formula' for the inverse of $\LL^A$, which might be of independent interest. 

    \subsection{Notation}
	We write $B(x,r) \sub \R^d$ for the open ball with center $x \in \R^d$ and radius $r > 0$ and denote inner products by $\langle \cdot\,, \cdot \rangle$ if the context is clear. The (almost everywhere) restriction of functions $u$ to a subset $F$ is indicated by $u|_F$. 

	
	\section{Geometry} \label{Sec: Geometry}
	
	This section contains prerequisites on function spaces and a unified treatment of Sobolev traces, leading to a variety of geometric configurations that match with our background assumption (Assumption~\ref{Ass: DBC}).
	
	\begin{definition}
		Let $\smooth[\R^d \setminus D]$ be the space of all $\C$-valued, smooth and compactly supported functions on $\R^d$ whose support avoids $D$. For $F \sub \R^d$ we define
		\begin{equation*}
			\smoothD[F] \coloneqq \smooth[\R^d \setminus D]|_F.
		\end{equation*}
	\end{definition}
	
	\begin{definition} \label{Geometry: Def: W^{1,p}_D and dual}
		Let $p \in [1, \infty)$. We denote by $\W_D^{1,p}(O)$ the closure of $\smoothD[O]$ with respect to the norm $\| \cdot \|_{1,p} \coloneqq ( \| \cdot \|_p^p + \| \nabla \cdot \|_p^p)^{\nicefrac{1}{p}}$ and abbreviate $V \coloneqq \W_D^{1,2}(O)$. Furthermore, we write $\W^{-1, p}_D(O) \coloneqq (\W^{1, p'}_D(O))^*$, the space of bounded anti-linear functionals on $\W^{1,p'}_D(O)$. 
	\end{definition}
	
	By uniform continuity, all functions in $\smoothD[O]$ extend continuously to $\overline{O}$. Hence, $V \cap \rC(\overline{O})$ is dense in $V$ and Assumption~\ref{Ass: DBC}~\ref{Ass: DBC: trace} makes sense. Note that we use $\smash{\W^{1,2}_\varnothing(O)}$ when the Dirichlet part is empty, which should be thought of as a regularized version of the usual Sobolev space $\W^{1,2}(O)$ that contains enough functions with a well-defined trace, compare with \cite[Chap.~4]{Ouhabaz_book} and \cite{Arendt_Elst_D_to_N, Arendt_Sauter_Wentzell_Laplace_and_Appr_Trace, Sauter_Uniqueness_of_Appr_Trace}. 
	
	\subsection{The identification map \texorpdfstring{\boldmath$j$}{j}} 
	
	To define dynamical boundary conditions rigorously via the form method, we use an embedding of $V$ into an $\L^2$-space on $O \cup \Sigma$. Below, $m$ is the Radon measure from Assumption~\ref{Ass: DBC}~\ref{Ass: DBC: trace}.
	
	\begin{definition} \label{Traces: Def: The map j and Lp over the hybrid measure space}
		Let $p \in [1, \infty]$. We denote by $\L^p(O) = \L^p(O, \d x)$ the usual Lebesgue space of $p$-integrable functions, put $\d \mu = \d x \oplus \d m$ and write
		\begin{equation*}
			\IL^p \coloneqq \L^p (O \cup \Sigma, \d \mu) \coloneqq \L^p(O) \oplus \L^p(\Sigma).
		\end{equation*}
		We refer to this space as the $\L^p$-space over the \emph{hybrid measure space} $(O \cup \Sigma, \d \mu)$. The \emph{identification operator} is given by
		\begin{equation*}
			j \colon V \to \IL^2, \quad j(u) \coloneqq u|_O \oplus \tr(u).
		\end{equation*}
	\end{definition}
	
	The terminology `hybrid' highlights the fact that parts of such functions `live' on the volume $(O, \d x)$, whereas another part `lives' on the typically lower dimensional set $(\Sigma, \d m)$. We could have also used $O \setminus \Sigma$ for the volume part in order to have `restrictions' to two disjoint sets. However, there's no confusion here as $\Sigma$ is assumed to be a Lebesgue null set according to Assumption~\ref{Ass: DBC}~\ref{Ass: DBC: sigma}. 
	
	\begin{lemma}  \label{Traces: Lem: j(V) dense}
		The space $\smoothD[\Sigma]$ is dense in $\L^2(\Sigma)$. In particular, $j(V)$ is dense in $\IL^2$.
	\end{lemma}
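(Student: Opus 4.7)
The plan is to establish the two density statements in sequence, with the second following from the first by an orthogonal complement argument in the Hilbert space $\IL^2$.

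For the density of $\smoothD[\Sigma]$ in $\L^2(\Sigma)$, I view $m$ as a Radon measure on the open set $U \coloneqq \R^d \setminus D$ via the zero extension $\widetilde m(B) \coloneqq m(B \cap \Sigma)$, so that $\L^2(\Sigma,\d m) = \L^2(U, \d \widetilde m)$ isometrically. Density of $\smooth[U] = \smooth[\R^d \setminus D]$ in $\L^2(U, \d \widetilde m)$ is the classical chain for Radon measures on locally compact Hausdorff spaces: Lusin's theorem yields density of $\rC_{\cc}(U)$, and standard mollification — exploiting that a function with compact support in the open set $U$ can be mollified with a sufficiently small kernel without leaving $U$ and hence avoiding $D$ — upgrades this to $\smooth[U]$. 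Restricting to $\Sigma$ gives the first claim.

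For the density of $j(V)$ in $\IL^2$, I argue via the orthogonal complement, reducing to the statement $j(V)^\perp = \{0\}$. Suppose $(f,g) \in \IL^2$ satisfies
\[
\int_O f \, \overline{u|_O} \, \d x + \int_\Sigma g \, \overline{\tr(u)} \, \d m = 0 \quad \text{for all } u \in V.
\]
Testing with $u = \varphi|_O$ for $\varphi \in \smooth[\R^d \setminus D]$ is admissible because $\smoothD[O] \subseteq V$ by Definition~\ref{Geometry: Def: W^{1,p}_D and dual}, and since $\varphi|_O$ extends continuously to $\overline{O}$, Assumption~\ref{Ass: DBC}~\ref{Ass: DBC: trace} gives $\tr(\varphi|_O) = \varphi|_\Sigma$. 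The orthogonality condition then becomes
\[
\int_U \overline{\varphi} \, \d \mu = 0 \quad \text{for all } \varphi \in \smooth[\R^d \setminus D],
\]
where $\mu \coloneqq f \1_O \, \d x + g \, \d m$ is a locally finite complex Borel measure on $U$ (using $f \in \L^1_{\loc}(O)$ and $g \in \L^1_{\loc}(\Sigma, \d m)$, the latter because $m$ is Radon). As $\smooth[U]$ is dense in $\rC_{\cc}(U)$ by the mollification argument from the first part, the Riesz representation theorem forces $\mu = 0$.

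The main obstacle is then to separate this single measure identity into $f = 0$ a.e.\ on $O$ and $g = 0$ $m$-a.e.\ on $\Sigma$, especially when $\Sigma \cap O \neq \varnothing$. I would exploit that $\Sigma$ is a Lebesgue null set by Assumption~\ref{Ass: DBC}~\ref{Ass: DBC: sigma}: restricting $\mu$ to $U \setminus \Sigma$ leaves $f \1_O \, \d x$ (since $\d m$ vanishes off $\Sigma$), while restricting $\mu$ to $\Sigma$ leaves $g \, \d m$ (since $\d x$ is trivial on the Lebesgue null set $\Sigma$). As $\mu = 0$, both summands vanish, yielding $f = 0$ and $g = 0$ as required. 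This final step crucially uses the mutual singularity of $\d x|_O$ and $\d m$ inherent in the hybrid measure on $(O \cup \Sigma, \d x \oplus \d m)$.
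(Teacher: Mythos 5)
Your argument reaches the right conclusion, but via a genuinely different route than the paper for both halves.  For the first half, the paper exhausts $\Sigma$ from inside by compact sets at positive distance to $D$ and then uses Stone--Weierstrass on each such compactum; you instead push $m$ out to a measure $\widetilde m$ on $U = \R^d \setminus D$ and run the Lusin--plus--mollification chain there.  For the second half, the paper works sequentially: it first kills $u|_O$ by testing with $\smooth$-functions vanishing on $\Sigma\cap O$ (exploiting that $\Sigma$ is Lebesgue null), and only then kills $u|_\Sigma$ using the already-established density; you instead fuse $f\,\1_O\,\d x$ and $g\,\d m$ into one measure $\mu$ on $U$, invoke uniqueness in the Riesz representation theorem, and separate the summands by mutual singularity.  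Both strategies are sound, and yours is arguably more conceptual.

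There is, however, one step you should make explicit.  You assert that $\widetilde m$ is a Radon measure on $U$ ``because $m$ is Radon'' on $\Sigma$, and similarly that $g\,\d m$ is locally finite on $U$.  This is not automatic from the subspace Radon property alone: if $K\subset U$ is compact, the set $K\cap\Sigma$ is bounded and closed in $\Sigma$, but it need not be \emph{compact} --- equivalently, relatively compact in the subspace $\Sigma$ --- unless $\Sigma$ is locally closed near $K$, which Assumption~\ref{Ass: DBC} does not stipulate.  Without this, the Lusin/mollification chain, the local finiteness of $\mu$, and the appeal to Riesz all have a hole.  The gap is cheap to close in your framework: testing the trace inequality in Assumption~\ref{Ass: DBC}~\ref{Ass: DBC: trace} with a smooth bump $\eta\in\smooth[U]$ equal to $1$ on $K$ gives $m(K\cap\Sigma) = \|\tr(\eta|_O)\|_{\L^2(\Sigma)}^2 \leq C^2\|\eta\|_V^2 < \infty$, so the zero extension is indeed locally finite on $U$.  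The paper's proof structure is designed precisely so that this point never arises, since it only invokes the Radon property on compacta contained in $\Sigma$.  With that supplement, your argument is complete.
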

	
	\begin{proof}
		Density of $\smoothD[\Sigma]$ in $\L^2(\Sigma)$ follows by combining three density results. 
		First, by dominated convergence, $\L^2(\Sigma)$-functions with support in a compact set $\Sigma' \sub \Sigma$ with $\dist(\Sigma',D)>0$ are dense in $\L^2(\Sigma)$. Second, $\Sigma'$ carries the Radon measure $m|_{\Sigma'}$, so $\rC(\Sigma')$ is dense in $\L^2 (\Sigma', \d m)$, see for instance \cite[Thm.~19.38]{Yeh_Real-Analysis}. Third, $\Sigma'$ keeps a positive distance to $D$, so the restrictions
		$\smoothD[\Sigma']$ form a unital $\ast$-algebra that separates the points of $\rC(\Sigma')$ and the Stone--Weierstrass theorem yields that $\smoothD[\Sigma']$ is $\| \cdot \|_{\infty}$-dense in $\rC(\Sigma')$. 
		
		In order to see that the above implies that $j$ has dense range in $\IL^2$, we suppose that $u \in j(V)^\perp$. Then, we have
		\begin{align*}
			0 = \langle u|_O, v|_O \rangle + \langle u|_\Sigma, v|_\Sigma \rangle \qquad (v \in j(V) \sub \IL^2).
		\end{align*}
		The second term on the right vanishes if $v \in \smooth[O]$ vanishes on $\Sigma \cap O$. Since $\Sigma$ is a Lebesgue null set, such functions are dense in $\L^2(O)$ and we conclude that $u|_O = 0$. Now, we can use the density result from the first part to conclude $u|_\Sigma = 0$ and thus $u=0$.
	\end{proof}
	
	For the $\IL^p$-theory, we shall need to commute $j$ and its inverse with certain non-linear maps. While this is clear for $j^{-1}$, because applying $j^{-1}$ means restricting functions in $j(V)$ from $O \cup \Sigma$ to $O$, the argument for $j$ is more involved.
	
	\begin{lemma}  \label{Traces: Lem: j commutes!}
		Let $k, l \in \N$, $\Phi \colon \C^k \to \C^l$ be Lipschitz continuous with $\Phi (0) = 0$ and let $U \coloneqq (u^i)_{i=1}^k \in V^k$. Then $\Phi(U) \in V^l$ and$$(j(\Phi(U)^1), \dots, j(\Phi(U)^l)) = \Phi(j(u^1), \dots, j(u^k)).$$
	\end{lemma}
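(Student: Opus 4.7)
My plan is to verify the claim first for smooth approximants of $U$, where the identity becomes pointwise, and then to transport it to arbitrary $U \in V^k$ by a density and weak-compactness argument in $V^l$.

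First, by definition of $V$ I choose sequences $(u^i_n)_n \subset \smoothD[O]$ with $u^i_n \to u^i$ in $V$ and set $U_n \coloneqq (u^i_n)_{i=1}^k$. Since $\Phi$ is $L$-Lipschitz with $\Phi(0)=0$, each composition $\Phi(U_n)$ extends to a Lipschitz function on $\R^d$ whose support is compact in $\R^d \setminus D$; mollification yields a $\smoothD[O]$-approximation in $\W^{1,2}$, so $\Phi(U_n) \in V^l$. The pointwise bounds $|\Phi(U_n)| \leq L|U_n|$ and $|\nabla \Phi(U_n)| \leq L|\nabla U_n|$ (Lipschitz chain rule via Rademacher's theorem) give a uniform estimate $\|\Phi(U_n)\|_{V^l} \les \|U_n\|_{V^k}$. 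By reflexivity a subsequence satisfies $\Phi(U_n) \rightharpoonup W$ weakly in $V^l$, and since $\Phi(U_n) \to \Phi(U)$ strongly in $\L^2(O)^l$ by Lipschitz continuity, one reads $W = \Phi(U)$ on $O$; as elements of $V^l$ are determined by their values on $O$, this forces $\Phi(U) \in V^l$.

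For the identification formula, the $O$-component of $j(\Phi(U)^j)$ is $\Phi(U)^j|_O = \Phi(u^1|_O, \ldots, u^k|_O)^j$ by the very definition of the pointwise composition. On $\Sigma$, continuity of each $u^i_n$ and of $\Phi(U_n)^j$ combined with Assumption~\ref{Ass: DBC}~\ref{Ass: DBC: trace} gives the pointwise identity
\begin{equation*}
	\tr(\Phi(U_n)^j) = \Phi(\tr(u^1_n), \ldots, \tr(u^k_n))^j \quad \text{on } \Sigma.
\end{equation*}
As $n \to \infty$ the right-hand side converges strongly in $\L^2(\Sigma)$ to $\Phi(\tr(u^1), \ldots, \tr(u^k))^j$ because $\Phi$ is Lipschitz and $\tr(u^i_n) \to \tr(u^i)$ in $\L^2(\Sigma)$, while the left-hand side converges weakly in $\L^2(\Sigma)$ to $\tr(\Phi(U)^j)$ by boundedness of $\tr \colon V \to \L^2(\Sigma)$ applied to $\Phi(U_n) \rightharpoonup \Phi(U)$ in $V^l$. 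Equating the two limits yields the trace identity and completes the argument.

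The main technical point is the membership $\Phi(U_n) \in V^l$ in the first step: the hypothesis $\Phi(0) = 0$ is essential, since it is what guarantees that the composition preserves the compact support of $U_n$ in $\R^d \setminus D$ and hence still encodes the homogeneous Dirichlet condition on $D$. Without it, the mollification of $\Phi(U_n)$ could leak support onto $D$ and the approximation in $V$ would fail.
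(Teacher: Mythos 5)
Your proof is correct and follows essentially the same approach as the paper: approximation by $\smoothD[O]^k$-functions, uniform boundedness and weak compactness of $\Phi(U_n)$ in $V^l$, identification of the weak limit via strong $\L^2(O)$-convergence, and passage to the limit in the trace identity which holds literally for continuous approximants. You spell out a couple of steps the paper leaves implicit — the mollification argument showing $\Phi(U_n) \in V^l$ and the separate treatment of the $O$- and $\Sigma$-components — but the underlying argument is the same.
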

	
	\begin{proof}
		We follow the argument in \cite[Lem.~4]{Egert_p-ellipticity}. Let $(U_n)_n = ((u_n^i)_{i=1}^k)_n \sub \smoothD[O]^k$ be such that $U_n \to U$ in $V^k$ as $n \to \infty$. Since $\Phi$ is Lipschitz continuous with $\Phi(0) =0$, it follows that $(\Phi(U_n))_n \sub V^l$ is bounded. Hence, we find some $v \in V^l$ such that $\Phi(U_n) \to v$ weakly in $V^l$ along a subsequence. On the other hand, $\Phi(U_n) \to \Phi(U)$ in $\L^2(O)^l$ and we conclude that $\Phi(U) = v \in V^l$. This proves the first assertion.
		
		To show the second one, we use the first part and that $j \colon V \to \IL^2$ is continuous to conclude that $(j(\Phi(U_n)^1), \dots, j(\Phi(U_n)^l)) \to (j(\Phi(U)^1), \dots, j(\Phi(U)^l))$ weakly in $(\IL^2)^l$ along a subsequence. Since all $\Phi(U_n)^i$ are continuous on $\overline{O}$, the map $\tr$ acts as an honest pointwise restriction and we can commute
		\begin{equation*}
			(j(\Phi(U_n)^1), \dots, j(\Phi(U_n)^l)) = \Phi(j(u_n^1), \dots, j(u_n^k)).
		\end{equation*}
		The right-hand side tends to $\Phi(j(u^1), \dots, j(u^k))$ in $\IL^2$ and the proof is complete.
	\end{proof}
	
	\subsection{Lebesgue points and traces} \label{Subsec: Lebesgue points and traces}
	
	For a globally defined function $f \in \L^1_{\loc}(\R^d)$, we recall that $x \in \R^d$ is a \emph{Lebesgue point} of $f$ if there exists $z \in \C$ such that
	\begin{equation*}
		\lim_{r \to 0}  \barint_{B(x,r)} |f(y) - z| \, \d y = 0,
	\end{equation*}
	compare with \cite[Sec.~6.2]{Hedberg}. In this case, we also have
	\begin{equation*}
		z = \lim_{r \to 0}  \barint_{B(x,r)} f(y) \, \d y.
	\end{equation*}
	Lebesgue points allow us to assign pointwise values to equivalence classes of functions, sometimes called `precise' or `refined' representative.
	
	\begin{definition}  \label{Traces: Def: Global trace}
		Let $f \in \L^1_{\loc}(\R^d)$ and $F \sub \R^d$. The \textbf{global trace} of $f$ to $F$ is defined as
		\begin{equation*}
			\tr_{\glob,F}(f)(x) \coloneqq \lim_{r \to 0}  \barint_{B(x,r)} f(y) \, \d y,
		\end{equation*}
		for all $x \in F$ for which the limit exists.
	\end{definition}
	
	If the set $F$ happens to be a subset of $\overline{O}$, we can also define a trace for functions that are only defined on $O$ as follows.
	
	\begin{definition}  \label{Traces: Def: Interior trace}
		Let $f$ be integrable on bounded subsets of $O$ and $F \sub \overline{O}$. The \textbf{interior trace} of $f$ to $F$ is defined as
		\begin{equation*}
			\tr_{\inter, F}(f)(x) \coloneqq \lim_{r \to 0} \barint_{O \cap B(x,r)} f(y) \, \d y,
		\end{equation*}
		for all $x \in F$ for which the limit exists.
	\end{definition}
	
	Whether global and interior traces exist in a suitable sense has been investigated extensively \cite{Arendt_Sauter_Wentzell_Laplace_and_Appr_Trace, Egert_Tolksdorf_Sobolev_Traces, Jonsson-Wallin, Sauter_Uniqueness_of_Appr_Trace, Swanson_Ziemer}. Here, our focus lies on making `soft' assumptions on $V$ that are commonly used in heat kernel theory on domains~\cite[Sec.~6.3]{Ouhabaz_book} rather than relying on geometric measure theory. By an \emph{extension} of a function $f$ on $O$ we mean a function $\ext (f)$ on $\R^d$ with the property that $(\ext (f))|_O = f$. An \emph{extension operator} $\ext \colon V \to \W^{1,2}(\R^d)$ is a linear operator such that $\ext (u)$ is an extension of $u$ for every $u \in V$.
	
	\begin{definition}  \label{Traces: Def: Extension property of V}
		We say that $V$ has the
		\begin{enumerate}
			\item \emph{embedding property} if there is some $\theta \in [0,1)$ and a constant $C_E \geq0$ such that
			\begin{equation*}
				\| u \|_q \leq C_E \| u \|_{1,2}^{1-\theta} \| u \|_2^{\theta} \qquad (u \in V),
			\end{equation*}
			where $q \in (2,\infty]$ is defined by $\nicefrac{1}{q} = \nicefrac{1}{2}-\nicefrac{(1-\theta)}{d}$.
			
			\item \emph{extension property} if there is a bounded extension operator $\ext \colon V \to \W^{1,2}(\R^d)$, which should satisfy the additional $\L^2$-bound $\|\ext (u) \|_2 \leq C_E \| u \|_2$ for some $C_E > 0$ and all $u \in V$ if we work in dimension $d=2$.
		\end{enumerate}
	\end{definition}
	
	\begin{remark} \label{Traces: Rem: (Ext) implies (V)}
		In Section~\ref{Subsec: Concrete geometry} we come back to the extension property in concrete settings. The extension property implies the embedding property. Indeed, if $d\geq 3$, then we can take $q= \nicefrac{2d}{(d-2)}$ and $\theta =0$, and use the commutative diagram
		\begin{equation}
			\label{eq: commutative diagram}
			\begin{tikzcd}[row sep=large, column sep=huge]
				\W^{1,2}(\R^d)
				\arrow{r}{\subseteq} 
				& 
				\L^{q}(\R^d)
				\arrow{d}{|_O}
				\\
				V 
				\arrow{u}{\ext}
				\arrow{r}{\subseteq}
				& \L^{q}(O)
			\end{tikzcd}
		\end{equation}
		where the first line is the Sobolev embedding on $\R^d$. If $d=2$, then we can take any $\theta \in (0,1)$ and transfer the respective Gagliardo--Nirenberg inequality from $\R^d$ to $O$ in the same manner. This is where the additional $\L^2$-bound for $\ext$ is needed.
	\end{remark}
	
	The embedding property ensures that $O$ is thick enough in points away from $D$. Indeed, the next result is a generalization of \cite[Thm.~1]{HKT-Measure-density}, see also \cite{Bechtel_Extension-in-Wsp}. Unlike the original result, it also applies in the critical case $d=2$ without any connectivity assumption on $O$, compare with the proof of \cite[Lem.~13]{HKT-Measure-density}. 
	
	\begin{proposition} \label{Traces: Prop: Degenerate ITC}
		If $V$ has the embedding property, then there is $C > 0$ depending only on $d, \theta$ and $C_E$ such that
		\begin{equation*}
			C r^d \leq  | O \cap B(x,r) |  
		\end{equation*}
		for all $x \in \overline{O} \setminus D$ and $r \in (0, 1 \land \dist(x,D))$.
	\end{proposition}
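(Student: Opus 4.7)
The plan follows the strategy of \cite[Lem.~13]{HKT-Measure-density}: extract a self-improving bound from the embedding inequality applied to a carefully chosen cutoff, and iterate. Fix $x \in \overline{O} \setminus D$ and $r \in (0, 1 \land \dist(x,D))$. Since $\overline{B(x, r)}$ is disjoint from $D$, any $\eta \in \smooth[B(x, r)]$ restricts to an element of $\smoothD[O]$, so $\eta|_O \in V$, making such $\eta$ admissible test functions for the embedding. Choose a standard cutoff $\eta$ with $0 \leq \eta \leq 1$, $\eta \equiv 1$ on $B(x, r/2)$, and $|\nabla \eta| \leq 4/r$; the elementary bounds
\[
\|\eta|_O\|_q^q \geq |O \cap B(x, r/2)|, \qquad \|\eta|_O\|_2^2 \leq |O \cap B(x, r)|, \qquad \|\eta|_O\|_{1,2}^2 \leq 17\, r^{-2}\, |O \cap B(x, r)|
\]
hold, the last using $r \leq 1$.

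Substituting into the embedding inequality and exploiting the identity $d/q = d/2 - (1 - \theta)$ coming from the definition of $q$, a short computation in terms of the normalized density $h(\rho) := |O \cap B(x, \rho)|/\rho^d$ gives the self-improving estimate
\[
h(r/2) \leq M\, h(r)^{q/2}, \qquad q/2 > 1,
\]
where $M$ depends only on $d, \theta, C_E$. Rearranging to $h(r) \geq M^{-2/q} h(r/2)^{2/q}$ and iterating $n$ times produces
\[
h(r) \geq M^{-\sum_{k=1}^{n} (2/q)^k}\, h(r/2^n)^{(2/q)^n},
\]
whose prefactor converges to $M^{-2/(q-2)}$ as $n \to \infty$, since $2/q < 1$.

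The main obstacle is then to show $\liminf_{n \to \infty} h(r/2^n)^{(2/q)^n} \geq 1$. Since $x \in \overline{O}$ and $O$ is open, $|O \cap B(x, \rho)| > 0$ for every $\rho > 0$, so $h(r/2^n) > 0$; combining this positivity with the universal upper bound $h \leq \omega_d$ (volume of the unit ball) and the constraint on the maximal decay rate of $h$ along dyadic scales imposed by the self-improving inequality itself, a careful analysis yields the required limit. This is the technical heart of \cite[Lem.~13]{HKT-Measure-density}, which in the critical dimension $d = 2$ appeals to a chain of balls joining $x$ to an interior point of $O$ and thus requires $O$ to be connected; our contribution is to carry out this final step using only the local self-improving inequality, dispensing with the connectivity hypothesis. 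The resulting constant $C := M^{-2/(q-2)}$ depends only on $d, \theta$ and $C_E$, as claimed.
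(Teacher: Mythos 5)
Your proposal reaches an impasse exactly at the step you yourself call ``the technical heart'': establishing $\liminf_{n\to\infty} h(r/2^n)^{(2/q)^n}\geq 1$. The inputs you invoke---positivity of $h$, the upper bound $h\leq\omega_d$, and the self-improving inequality $h(\rho/2)\leq M\,h(\rho)^{q/2}$---do not deliver this. The self-improving inequality only gives an \textit{upper} bound on $h$ at the finer scale; it places no constraint on how rapidly $h$ may \textit{decay}. Concretely, any sequence of the form $a_n = \varepsilon^{(q/2)^n}$ with $0<\varepsilon<1$ satisfies $a_n = a_{n-1}^{q/2}\leq M a_{n-1}^{q/2}$, is positive, and is bounded above, yet $a_n^{(2/q)^n} = \varepsilon$ for all $n$, so the required liminf would be $\varepsilon$, which can be arbitrarily small. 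Your phrase ``a careful analysis yields the required limit'' is therefore covering an argument that is not merely omitted but that cannot be extracted from the ingredients you list. This is presumably why the original \cite[Lem.~13]{HKT-Measure-density} in the critical case appeals to a chain of balls and connectivity---that argument supplies genuinely new information beyond the pointwise self-improving inequality.

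The paper avoids this difficulty with a different choice of cutoff: instead of halving the \textit{radius} (which leads to your functional inequality in $h$), it halves the \textit{measure}. One picks the ``halving radius'' $\widehat{r}\in(0,r)$ with $|O\cap B(x,\widehat r)|=\tfrac12|O\cap B(x,r)|$ and a cutoff equal to $1$ on $B(x,\widehat r)$, vanishing off $B(x,r)$, with gradient of size $(r-\widehat r)^{-1}$. The embedding inequality then gives a \textit{linear} bound $r-\widehat r\lesssim |O\cap B(x,r)|^{1/d}$, which telescopes cleanly: setting $r_1=r$, $r_{n+1}=\widehat{r_n}$ one has $|O\cap B(x,r_n)|=2^{-n}|O\cap B(x,r)|$ and $r_n\to0$ (this is immediate from $x\in\overline O$ and $O$ open), so $r=\sum_n(r_n-r_{n+1})\lesssim\sum_n 2^{-n/d}|O\cap B(x,r)|^{1/d}$ and the geometric series converges. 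No limit of iterated powers appears, and no connectivity is needed. If you want to salvage your route, you would need to explicitly reproduce whatever extra control the chain-of-balls argument provides; as written the proof does not close.
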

	
	\begin{proof}
		We fix $x$ and $r$. As in \cite{HKT-Measure-density}, we consider a `halving radius' $\widehat{r} \in (0,r)$ such that 
		\begin{equation*}
			|O \cap B(x, \widehat{r})| = \frac{1}{2} |O \cap B(x,r)|. 
		\end{equation*}
		We fix $\varphi \in \smooth[\R^d]$ such that $\1_{B(x, \widehat{r})} \leq \varphi \leq \1_{B(x, r)}$ and $\| \nabla \varphi \|_{\infty} \leq \nicefrac{c}{(r - \widehat{r})}$ with a dimensional constant $c \geq 1$. Then $\varphi|_O \in V$ with estimates
		\begin{align*}
			\|\varphi\|_2 &\leq |O \cap B(x,r)|^{\frac 12},\\
			\|\varphi\|_{1,2} &\leq 2c(r-\widehat{r})^{-1}|O \cap B(x,r)|^{\frac 12},\\
			\|\varphi\|_q &\geq \Big(\frac{1}{2} |O \cap B(x,r)|\Big)^{\frac 12 - \frac{(1-\theta)}{d}},
		\end{align*}
		where for the Sobolev norm we have used that $r-\widehat{r} \leq 1$. We plug this bound into the embedding property in order to obtain 
		\begin{equation*}
			r - \widehat{r} \leq 2 c (2^{\nicefrac{1}{q}} C_E)^{\frac{1}{(1 - \theta)}} |O \cap B(x,r)|^{\frac{1}{d}}.
		\end{equation*}
		
		Now, we iterate: $r_1 \coloneqq r$ and $r_{n+1} \coloneqq \widehat{r_n}$. Since $|O \cap B(x, r_n)| = 2^{-n} |O \cap B(x,r)|$ tends to $0$ as $n \to \infty$, we find $r_n \to 0$. A telescoping series yields the claim
		\begin{align*}
			r &= \sum_{n=1}^{\infty} (r_{n} - r_{n+1})
			\\ &\leq 2 c (2^{\nicefrac{1}{q}} C_E)^{\frac{1}{(1 - \theta)}} \sum_{n=1}^{\infty} |O \cap B(x, r_n)|^{\frac{1}{d}} 
			\\&= 2 c (2^{\nicefrac{1}{q}} C_E)^{\frac{1}{(1 - \theta)}} \bigg( \sum_{n=1}^{\infty} 2^{- \frac{n}{d}} \bigg) |O \cap B(x,r)|^{\frac{1}{d}} 
			\\&= \frac{2 c (2^{\nicefrac{1}{q}} C_E)^{\frac{1}{(1 - \theta)}}}{2^{\nicefrac{1}{d}}-1} |O \cap B(x,r)|^{\frac{1}{d}}. \qedhere
		\end{align*}
	\end{proof}
	
	With the previous result at hand, we can prove that our various notions of traces coincide on $\Sigma$. 
	
	\begin{corollary} \label{Traces: Cor: Tr_I = Tr_G for Lebesgue point}
		Suppose that $V$ has the embedding property. Let $f$ be integrable on bounded subsets of $O$ and let $u \in V \cap \rC(\overline{O})$. 
		\begin{enumerate}
			\item \label{Traces: Cor: Tr_I = Tr_G for Lebesgue point: i} If $\ext (f) \in \L^1_\loc(\R^d)$ is any extension of $f$, then at every Lebesgue point $x \in \Sigma$ of $\ext (f)$ we have
			\begin{equation*}
				\tr_{\glob, \Sigma}(\ext (f))(x) = \tr_{\mathrm{int},\Sigma}(f)(x).
			\end{equation*}
			
			\item \label{Traces: Cor: Tr_I = Tr_G for Lebesgue point: ii} For every $x \in \Sigma$ we have
			\begin{equation*}
				\tr(u)(x) = \tr_{\mathrm{int}, \Sigma} (u)(x).
			\end{equation*}
		\end{enumerate}
	\end{corollary}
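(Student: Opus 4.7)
My plan is to base both parts on the single ingredient provided by Proposition~\ref{Traces: Prop: Degenerate ITC}: the measure density lower bound $|O \cap B(x,r)| \gtrsim r^d$. First I would verify that the proposition applies at every $x \in \Sigma$: since $\Sigma \subseteq \overline{O} \setminus D$ by Assumption~\ref{Ass: DBC}~\ref{Ass: DBC: sigma} and $D$ is closed, we have $\dist(x,D) > 0$ for every $x \in \Sigma$, so the bound holds for all sufficiently small $r > 0$.

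For (i), I would fix a Lebesgue point $x \in \Sigma$ of $\ext(f)$ with Lebesgue value $z$ and, using $\ext(f)|_O = f$ almost everywhere, compare the interior average to the full ball-average via
\begin{align*}
\left|\barint_{O \cap B(x,r)} f(y)\,\d y - z\right|
&\leq \barint_{O \cap B(x,r)} |\ext(f)(y) - z|\,\d y\\
&\leq \frac{|B(x,r)|}{|O \cap B(x,r)|}\,\barint_{B(x,r)} |\ext(f)(y) - z|\,\d y.
\end{align*}
The prefactor is uniformly bounded in small $r$ by Proposition~\ref{Traces: Prop: Degenerate ITC}, while the remaining ball-average vanishes as $r \to 0$ by the Lebesgue point hypothesis, yielding the claim.

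For (ii), Assumption~\ref{Ass: DBC}~\ref{Ass: DBC: trace} yields $\tr(u)(x) = u(x)$ for $u \in V \cap \rC(\overline{O})$ and $x \in \Sigma$, since the trace extends the pointwise restriction. The interior average over $O \cap B(x,r)$ is well-defined for small $r$ because $|O \cap B(x,r)| > 0$ by Proposition~\ref{Traces: Prop: Degenerate ITC}, and I would conclude via the elementary bound
$$\left|\barint_{O \cap B(x,r)} u(y)\,\d y - u(x)\right| \leq \sup_{y \in \overline{O} \cap B(x,r)} |u(y)-u(x)|,$$
whose right-hand side vanishes as $r \to 0$ by continuity of $u$ on the closed set $\overline{O}$.

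I do not anticipate any serious obstacle here, as both parts amount to quantitative versions of Lebesgue differentiation restricted to a thick subset, once the thickness is secured by Proposition~\ref{Traces: Prop: Degenerate ITC}. An alternative route to (ii) would be to Tietze-extend $u$ continuously to $\R^d$ and invoke (i), since every point is then a Lebesgue point of the extension with value $u(x)$; however, the direct continuity argument seems shorter and avoids chaining the two statements.
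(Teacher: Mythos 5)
Your proposal is correct and is essentially the paper's own argument: part~\ref{Traces: Cor: Tr_I = Tr_G for Lebesgue point: i} is proved by exactly the same comparison of the interior average to the full ball average, with the prefactor controlled by Proposition~\ref{Traces: Prop: Degenerate ITC}, and part~\ref{Traces: Cor: Tr_I = Tr_G for Lebesgue point: ii} is the same one-line continuity observation that the paper states without elaboration. Your preliminary remark that $\dist(x,D)>0$ for $x\in\Sigma$ (so the proposition is applicable for small $r$) is a helpful clarification that the paper leaves implicit.
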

	
	\begin{proof}[\rm\bf{Proof of (i)}.] We set $z \coloneqq \tr_{\glob, \Sigma}(\ext (f))(x)$. For $r \in (0, 1 \land \dist(x,D))$ we obtain from Proposition~\ref{Traces: Prop: Degenerate ITC} that 
		\begin{equation*}
			\bigg| \barint_{O \cap B(x,r)} f \, \d y - z \bigg|
			\leq \barint_{O \cap B(x,r)} |f(y) -z| \, \d y
			\leq \frac{|B(0,1)|}{C} \barint_{B(x,r)} |\ext (f)(y) - z| \, \d y. 
		\end{equation*}
		The right-hand side converges to $0$ as $r \to 0$ since $x$ is a Lebesgue point of $\ext (f)$. Hence, $\tr_{\mathrm{int}, \Sigma}(f)(x)$ exists and equals $z$.
		
		\textbf{Proof of (ii).} This follows since $u$ is continuous at $x$.	
	\end{proof}
	
	\subsection{Continuity of the trace}    \label{Subsec: Continuity of the trace}
	
	In order to get a continuous trace map into $\L^2(\Sigma)$ as required in Assumption~\ref{Ass: DBC}, we need to guarantee that Sobolev functions have sufficiently many Lebesgue points on $\Sigma$ and that the so-obtained trace is controlled in norm. For this part only, we switch to a more concrete setup inspired by Jonsson--Wallin~\cite{Jonsson-Wallin}.
	
	We work with the Hausdorff measure $\cH^\ell$ of dimension $\ell \in (d-2,d)$ on $\R^d$. Readers can refer to \cite[Chap.~7]{Yeh_Real-Analysis} for background. In particular, sets with finite $\cH^\ell$-measure are Lebesgue null, and if $F \subseteq \R^d$ is a Borel set with $\cH^{\ell}(F) < \infty$, then the restriction of $\cH^{\ell}$ to $F$ is a Radon measure. The restriction on the dimension stems from a fundamental result in potential theory~\cite[Thm.~6.2.1 \& Thm.~5.1.13]{Hedberg}: every $u \in \W^{1,2}(\R^d)$ has Lebesgue points $\cH^{\ell}$-almost everywhere.

	
	\begin{definition}   \label{Traces: Def: Upper s-set}
		Let $\ell \in (d-2, d)$ and $F \sub \R^d$ be a Borel set. We call $F$ an
		\begin{itemize}
			\item  \textbf{upper $\boldsymbol{\ell}$-set} if there is a constant $C > 0$ such that
			\begin{equation*}
				\cH^\ell (F \cap B(x,r)) \leq C r^{\ell} \qquad (x \in F, r \in (0,1]).
			\end{equation*}
			
			\item  \textbf{$\boldsymbol{\ell}$-set} if there are constants $C, c > 0$ such that
			\begin{equation*}
				c r^{\ell} \leq \cH^\ell (F \cap B(x,r)) \leq C r^{\ell} \qquad (x \in F, r \in (0,1]).
			\end{equation*}
			
		\end{itemize}
	\end{definition}
	
	If $\Sigma$ is an upper $\ell$-set, then we take $m=\cH^\ell|_\Sigma$ as the measure on $\Sigma$. Here is the main result of the section:
	
	\begin{theorem} \label{Traces: Thm: All traces coincide and are bounded}
		Assume that $V$ has the extension property (with extension operator $\ext$) and that $\Sigma$ is an upper $\ell$-set for some $\ell \in (d-2,d)$. Then 
		\begin{equation*}
			\tr_{\glob, \Sigma} (\ext (u)) = \tr_{\mathrm{int}, \Sigma} (u) = \tr (u) \qquad (u\in V)
		\end{equation*}
		and all three linear operators are bounded from $V$ into $\L^2(\Sigma)$. Furthermore, if $\Sigma$ is bounded, then they are compact. 
	\end{theorem}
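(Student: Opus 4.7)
The plan is to construct a bounded linear candidate for the trace by composing the extension $\ext$ with a classical trace operator on $\R^d$, and then identify this candidate with the pointwise restriction on the dense subspace $V \cap \rC(\overline{O})$. The key external ingredient is the trace theorem of Jonsson--Wallin~\cite{Jonsson-Wallin}: since $\Sigma$ is an upper $\ell$-set with $\ell \in (d-2, d)$, the global trace is a bounded operator
\[
\tr_{\glob, \Sigma} \colon \W^{1,2}(\R^d) \to \L^2(\Sigma, \cH^\ell|_\Sigma),
\]
and for every $v \in \W^{1,2}(\R^d)$ the defining limit exists at $\cH^\ell$-almost every point of $\Sigma$ thanks to Hedberg's Lebesgue-point theorem~\cite[Thm.~6.2.1]{Hedberg}. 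Accordingly, I set $T \coloneqq \tr_{\glob, \Sigma} \circ \ext \colon V \to \L^2(\Sigma)$, which is bounded by the extension property of $V$.

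Next I would identify $T$ with pointwise restriction on $V \cap \rC(\overline{O})$. The extension property implies the embedding property (Remark~\ref{Traces: Rem: (Ext) implies (V)}), so Corollary~\ref{Traces: Cor: Tr_I = Tr_G for Lebesgue point} applies. For $u \in V \cap \rC(\overline{O})$, part~\ref{Traces: Cor: Tr_I = Tr_G for Lebesgue point: ii} gives $u(x) = \tr_{\mathrm{int}, \Sigma}(u)(x)$ for every $x \in \Sigma$, while part~\ref{Traces: Cor: Tr_I = Tr_G for Lebesgue point: i} combined with Hedberg's theorem applied to $\ext(u)$ yields $Tu(x) = \tr_{\mathrm{int}, \Sigma}(u)(x)$ for $\cH^\ell$-almost every $x \in \Sigma$. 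Hence, $T$ agrees with pointwise restriction $u \mapsto u|_\Sigma$ on the dense subspace $V \cap \rC(\overline{O})$, which verifies Assumption~\ref{Ass: DBC}~\ref{Ass: DBC: trace} with $m \coloneqq \cH^\ell|_\Sigma$ and forces $\tr = T = \tr_{\glob, \Sigma} \circ \ext$ on all of $V$. For an arbitrary $u \in V$, Hedberg's theorem ensures that $\cH^\ell$-almost every $x \in \Sigma$ is a Lebesgue point of $\ext(u)$, and Corollary~\ref{Traces: Cor: Tr_I = Tr_G for Lebesgue point}~\ref{Traces: Cor: Tr_I = Tr_G for Lebesgue point: i} then identifies $\tr_{\mathrm{int}, \Sigma}(u) = \tr_{\glob, \Sigma}(\ext(u)) = \tr(u)$ as elements of $\L^2(\Sigma)$. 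All three operators thus coincide and inherit boundedness from $T$.

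For the compactness statement when $\Sigma$ is bounded, I would rely on the sharper form of the Jonsson--Wallin trace theorem, according to which $\tr_{\glob, \Sigma}$ maps $\W^{1,2}(\R^d)$ boundedly into a Besov-type space of fractional smoothness $s = 1 - (d-\ell)/2 > 0$ on $(\Sigma, \cH^\ell|_\Sigma)$; the embedding of this space into $\L^2(\Sigma)$ is compact whenever $\cH^\ell(\Sigma) < \infty$, and composition with the bounded $\ext$ transports compactness back to $V$. I expect the most delicate step to be the pointwise identification on $V \cap \rC(\overline{O})$, since it couples the abstract operator $T$ with three a priori different pointwise notions of trace; everything else then follows by density and by the uniqueness of bounded extensions.
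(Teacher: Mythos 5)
Your argument for the identification of the three traces and for their boundedness is essentially the paper's: the same ingredients (Hedberg's Lebesgue-point theorem, Corollary~\ref{Traces: Cor: Tr_I = Tr_G for Lebesgue point}, the Jonsson--Wallin trace bound from $\W^{1,2}(\R^d)$, and composition with $\ext$) are assembled in the same logical order, modulo whether you present the boundedness of $T=\tr_{\glob,\Sigma}\circ\ext$ before or after the pointwise a.e.\ identification. No substantive difference there.

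The compactness step, however, takes a genuinely different route and, as written, has a gap. You propose to land in a Besov-type space of positive smoothness \emph{on} $(\Sigma, \cH^\ell|_\Sigma)$ and then invoke a compact embedding of that space into $\L^2(\Sigma)$ under the sole hypothesis $\cH^\ell(\Sigma)<\infty$. Two problems: first, the Jonsson--Wallin Besov-space-on-$F$ theory that you would need here is developed for genuine $\ell$-sets (two-sided mass bounds), whereas in this theorem $\Sigma$ is only assumed to be an \emph{upper} $\ell$-set, so you cannot simply cite the range description $\rB^{2,2}_{1-(d-\ell)/2}(\Sigma)$; second, even when that space is available, the compactness of its embedding into $\L^2(\Sigma,\cH^\ell)$ is not a standard off-the-shelf fact and you give no reference or argument for it. The paper sidesteps both issues by doing all the compactness on $\R^d$: it cuts off with $\eta\in\smooth[B]$ so that $\tr_\Sigma\circ\ext=\tr_\Sigma\circ(\eta\ext)$, uses the small-loss version of Jonsson--Wallin $\tr_{\glob,\Sigma}\colon\rB^{2,2}_{1-\varepsilon}(\R^d)\to\L^2(\Sigma)$ (still only requiring the upper $\ell$-set property), and then factors through the \emph{standard} compact embedding $\W^{1,2}_{\partial B}(B)\subseteq\rB^{2,2}_{1-\varepsilon}(\R^d)$ on the bounded Euclidean ball. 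If you want to salvage your approach, you would need to either restrict to $\ell$-sets or supply a compactness lemma for the trace space on an upper $\ell$-set; otherwise the paper's cutoff-and-loss-of-smoothness-on-$\R^d$ argument is the cleaner and fully justified path.
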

	
	\begin{proof}
		Since $\ext (u)$ has Lebesgue points $\cH^\ell$-a.e.\ on $\R^d$, we conclude from Corollary~\ref{Traces: Cor: Tr_I = Tr_G for Lebesgue point}~\ref{Traces: Cor: Tr_I = Tr_G for Lebesgue point: i} that $\tr_{\glob, \Sigma} (\ext (u)) = \tr_{\mathrm{int}, \Sigma} (u)$ holds $\cH^\ell$-a.e.\ on $\R^d$. Since $\ell > d-2$, we obtain from \cite[Chap.~VI, Thm.~1 \& Rem.~1]{Jonsson-Wallin} that $\tr_{\glob,\Sigma} \colon \W^{1,2}(\R^d) \to \L^2(\Sigma)$ is bounded. Hence, also $\tr_{\glob, \Sigma} \circ \ext \colon V \to \L^2(\Sigma)$ is bounded. If in addition $u \in \rC(\overline{O})$, then by Corollary~\ref{Traces: Cor: Tr_I = Tr_G for Lebesgue point}~\ref{Traces: Cor: Tr_I = Tr_G for Lebesgue point: ii} we have $\tr(u) = \tr_{\mathrm{int}, \Sigma} (u)$ and we have already seen that the interior trace is bounded and everywhere defined on $V$. Thus, $\tr_{\mathrm{int}, \Sigma}$ is the continuous extension of $\tr$ to $V$.
		
		It remains to prove that $\tr_{\glob,\Sigma} \circ \ext$ is compact if $\Sigma$ is bounded. To this end, we fix an open ball $B \supseteq \overline{\Sigma}$ and a function $\eta \in \smooth[B]$ with $\eta = 1$ on $\overline{\Sigma}$, so that $\tr_{\Sigma} \circ \ext = \tr_{\Sigma} \circ (\eta \ext)$. The key point is that \cite[Chap.~VI, Thm.~1]{Jonsson-Wallin} even gives continuity of $\tr_{\glob,\Sigma} \colon \rB^{2,2}_{1-\varepsilon}(\R^d) \to \L^2(\Sigma)$ on Besov spaces for sufficiently small $\varepsilon>0$. Similar to \eqref{eq: commutative diagram}, $\tr_{\Sigma} \circ \ext$ factorizes through the embedding $\W^{1,2}_{\partial B}(B) \sub \rB^{2, 2}_{1- \varepsilon}(\R^d)$, which is compact \cite[Cor.~2.96]{FA_and_NLPDE_book}.
	\end{proof}
	
	\begin{corollary}  \label{Traces: Cor: j compact}
		In the setting of Theorem~\ref{Traces: Thm: All traces coincide and are bounded} suppose that $O$ is bounded. Then $j \colon V \to \IL^2$ is compact. 
	\end{corollary}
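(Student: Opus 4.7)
The plan is to decompose $j$ into its two components: write $j = j_1 \oplus j_2$ where $j_1\colon V \to \L^2(O)$, $j_1(u) \coloneqq u|_O$, and $j_2\colon V \to \L^2(\Sigma)$, $j_2(u) \coloneqq \tr(u)$. Since a finite direct sum of compact operators is compact, it suffices to establish compactness of each component separately.

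For $j_2$, note that $\Sigma \sub \overline{O}$ and $O$ is bounded, so $\Sigma$ is bounded. The compactness assertion at the end of Theorem~\ref{Traces: Thm: All traces coincide and are bounded} then gives that $\tr\colon V \to \L^2(\Sigma)$ is compact directly, with no further work required.

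For $j_1$, I plan to reduce to the standard Rellich--Kondrachov compactness on a ball by combining the extension operator with a cutoff, in the same spirit as the argument already used to prove compactness of $\tr_{\glob,\Sigma} \circ \ext$ in Theorem~\ref{Traces: Thm: All traces coincide and are bounded}. Concretely, fix an open ball $B \supseteq \overline{O}$ and a function $\eta \in \smooth[B]$ with $\eta \equiv 1$ on $\overline{O}$, so that $u|_O = (\eta\, \ext(u))|_O$ for every $u \in V$. The map $u \mapsto \eta\, \ext(u)$ is bounded from $V$ into $\W^{1,2}_0(B)$: indeed, the gradient is controlled using $\ext\colon V \to \W^{1,2}(\R^d)$, while for the $\L^2$-term, the extra $\L^2$-bound required of $\ext$ in dimension $d=2$ supplies exactly the needed $\L^2$ control, and for $d \geq 3$ the Sobolev embedding on $\R^d$ combined with H\"older's inequality on the bounded set $B$ provides it. Composing with the compact embedding $\W^{1,2}_0(B) \hookrightarrow \L^2(B)$ and restricting back to $O$ yields compactness of $j_1$.

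I expect no genuine obstacle here: the argument is a straightforward assembly of Theorem~\ref{Traces: Thm: All traces coincide and are bounded} for the boundary component and a classical Rellich--Kondrachov argument for the interior component, both of which are already implicit in the techniques developed earlier in this section.
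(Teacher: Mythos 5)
Your argument is correct and follows essentially the same route as the paper's: split $j$ into the volume part $u \mapsto u|_O$ and the boundary part $u \mapsto \tr(u)$, dispose of the latter via the compactness already proven in Theorem~\ref{Traces: Thm: All traces coincide and are bounded}, and factor the former through $u \mapsto \eta\,\ext(u)$ into a Sobolev space on a surrounding ball $B$, then restrict. The only cosmetic difference is the compact embedding you invoke: you use the classical Rellich--Kondrachov embedding $\W^{1,2}_0(B) \hookrightarrow \L^2(B)$, while the paper reuses the compact Besov embedding $\W^{1,2}_{\partial B}(B) \sub \rB^{2,2}_{1-\varepsilon}(\R^d)$ already on display in the proof of Theorem~\ref{Traces: Thm: All traces coincide and are bounded}; both do the job. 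One small inefficiency in your write-up: the boundedness of $u \mapsto \eta\,\ext(u)$ from $V$ into $\W^{1,2}_0(B)$ follows immediately from $\ext$ being bounded $V \to \W^{1,2}(\R^d)$ (the $\W^{1,2}$-norm already controls the $\L^2$-norm), so the dimension-split and the extra $\L^2$-bound for $\ext$ in $d=2$ are not actually needed here --- that extra hypothesis is only used to transfer the Gagliardo--Nirenberg inequality in Remark~\ref{Traces: Rem: (Ext) implies (V)}, not for this compactness.
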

	
	\begin{proof}
		We already know that $\tr \colon V \to \L^2(\Sigma)$ is compact. It remains to see that the inclusion $V \subseteq \L^2(O)$ is compact. But this follows by taking an open ball $B \supseteq \overline{O}$ and a function $\eta \in \smooth[B]$ with $\eta = 1$ on $\overline{O}$ and factorizing the inclusion through the same compact embedding as before.
	\end{proof}
	
	\subsection{Concrete geometric setups}  \label{Subsec: Concrete geometry}
	The extension property for $V$ holds if $O$ is a \emph{Lipschitz domain near} $\partial O \setminus D$ or more generally, if $O$ is \emph{locally uniform near $\partial O \setminus D$}, see \cite{Kato_Mixed, Extension_Operator} for details. Theorem~\ref{Traces: Thm: All traces coincide and are bounded} implies the following concrete version of Assumption~\ref{Ass: DBC}. In Figure~\ref{fig. Traces: A geometric constellation} we illustrate a geometric configuration that goes far beyond the Lipschitz class.
	
	\begin{corollary} \label{Traces: Cor: Concrete setup implying Ass}
		Suppose that $D$ is closed, $\Sigma$ is an upper $\ell$-set for some $\ell \in (d-2,d)$ and $O$ is locally uniform near $\partial O \setminus D$. Then Assumption~\ref{Ass: DBC} is satisfied. 
	\end{corollary}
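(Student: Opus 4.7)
The plan is to verify the four items of Assumption~\ref{Ass: DBC} in order; items (i), (ii) and the Borel / non-emptiness part of (iii) are already built into the hypotheses of the corollary, so the real work lies in establishing the Lebesgue null property, the Radon character of $m$, and the continuity of the trace.

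First, set $m\coloneqq\cH^\ell|_\Sigma$. Since $\Sigma$ is an upper $\ell$-set, any compact $K\sub\Sigma$ can be covered by finitely many unit balls of $\R^d$ centered in $\Sigma$, each contributing $\cH^\ell$-mass at most $C$, so $\cH^\ell(K)<\infty$. By the fact recorded just before Definition~\ref{Traces: Def: Upper s-set}, such a $K$ is then Lebesgue null. Writing $\Sigma$ as a countable union of such compact pieces (via an exhaustion of $\R^d$ by concentric closed balls intersected with $\Sigma$) yields $|\Sigma|=0$, which establishes the missing clause of \ref{Ass: DBC}~\ref{Ass: DBC: sigma}.

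Next, I would verify that $m$ is Radon in the sense of Remark~\ref{Rem: Ass: DBC}. The previous step shows that $m$ is finite on compact subsets of $\Sigma$, hence locally finite on $\Sigma$ equipped with its subspace topology. Because $\cH^\ell$ is Borel regular on $\R^d$ and each compact piece of $\Sigma$ has finite mass, one obtains outer regularity on Borel subsets of $\Sigma$ and inner regularity on relatively open subsets by the standard $\sigma$-finite approximation argument (exhaust by compacts from the inside, by open sets from the outside).

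Finally, for item~\ref{Ass: DBC: trace}, I would invoke the hypothesis that $O$ is locally uniform near $\partial O \setminus D$: by the results of \cite{Kato_Mixed, Extension_Operator} this yields a bounded extension operator $\ext\colon V \to \W^{1,2}(\R^d)$ together with the additional $\L^2$-bound required in dimension $d=2$, so $V$ has the extension property of Definition~\ref{Traces: Def: Extension property of V}. Since $\Sigma$ is an upper $\ell$-set with $\ell\in(d-2,d)$, Theorem~\ref{Traces: Thm: All traces coincide and are bounded} applies and produces a bounded linear trace operator $\tr\colon V \to \L^2(\Sigma,\d m)$ that agrees with the pointwise restriction on $V\cap\rC(\overline{O})$, which is precisely what Assumption~\ref{Ass: DBC}~\ref{Ass: DBC: trace} demands. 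The only mildly delicate point is checking the Radon conditions in the exact formulation adopted here; every other step either is a hypothesis or follows directly from a result already established in this section or cited from the literature.
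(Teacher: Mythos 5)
Your argument is correct and follows the same route as the paper, which gives no explicit proof but precedes the corollary with the observation that local uniformity near $\partial O \setminus D$ supplies the extension property, so that Theorem~\ref{Traces: Thm: All traces coincide and are bounded} takes care of Assumption~\ref{Ass: DBC}~\ref{Ass: DBC: trace}; you simply spell out in addition the Lebesgue-null and Radon clauses of item~\ref{Ass: DBC: sigma}, which the paper subsumes into the remarks preceding Definition~\ref{Traces: Def: Upper s-set}. One small slip: $\Sigma\cap\overline{B(0,n)}$ is bounded and Borel but in general not compact, so you should speak of ``bounded pieces'' rather than ``compact pieces''; the finite covering by unit balls centered at a maximal $\tfrac12$-separated subset of $\Sigma\cap\overline{B(0,n)}$ still gives $\cH^\ell(\Sigma\cap\overline{B(0,n)})<\infty$ via the upper $\ell$-set bound, and the rest of your argument is unaffected.
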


	\begin{figure}
		\centering
		\begin{tikzpicture}[decoration=Koch snowflake]
			
			\fill[violet!30] (-2,-2) -- plot [smooth, tension=0.3] coordinates {(-2,0) (-1,0) (0.2,-0.7) (0,0) (1.7, -1) (0.3, 0.8) (2,0)} -- (8,0) -- plot [smooth, tension=0.8] coordinates {(8,0) (7,-0.2) (8.5,-1) (9,-0.6)} -- (9,-2); 
			
			\draw[->] (-1,0) -- (9,0) node[right] {$x$};
			\draw[->] (0,-2) -- (0,3.3) node[above] {$y$};
			
			\draw [black, line width = 2] plot [smooth, tension=0.3] coordinates {(2,0) (0.3, 0.8) (1.7, -1) (0,0) (0.2,-0.7) (-1,0) (-2,0)}; 
			\draw[black, line width = 2] (6,0) -- (8,0);
			\draw [black, line width = 2] plot [smooth, tension=0.8] coordinates {(8,0) (7,-0.2) (8.5,-1) (9,-0.6)}; 
			
			\draw[fill = violet!30, line width = 0.8] decorate{ decorate{ decorate{ decorate{ (2,0) -- (6,0) }}}};
			\draw[line width = 0.8] (3.3,-1) -- (6.7,-1); 
			
			\node[above] at (6,2) {$O^c$};
			\node[below] at (3,-1.5) {$O$};
			\node[right] at (-1.5,0.3) {$D$};
			\node[right] at (6.5,0.3) {$D$};
			\node[below] at (4.5,-1) {$\Sigma \cap O$}; 
			\node[right] at (4,1.5) {$\Sigma \cap \partial O$};
		\end{tikzpicture}
		\caption{A geometric constellation in $\R^2$ that matches with Corollary~\ref{Traces: Cor: Concrete setup implying Ass} and hence satisfies Assumption~\ref{Ass: DBC}. The set $\Sigma \cap \partial O$ is a part of the von Koch snowflake, which is an upper $\ell$-set for $\ell = \log_3(4) > 1$, see \cite[Sec.~2.3]{Falconer_book}, and we take $m=\cH^\ell$ on this dynamical boundary part. A proof of local uniformity of $O$ near $\partial O \setminus D$ can be found in \cite[Prop.~6.30]{Uniform_domains_book}. The proof of Theorem~\ref{Traces: Thm: All traces coincide and are bounded} shows that $\Sigma$ could be the union of multiple disjoint upper $\ell$-sets with  different values of $\ell$ and we can add a jump condition over the line segment  $\Sigma \cap O$, which is a $1$-set.}.
		\label{fig. Traces: A geometric constellation}
	\end{figure}
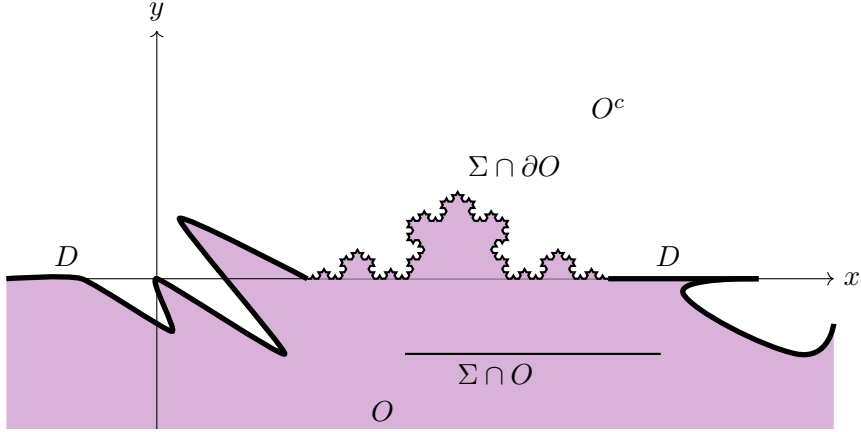
	
	We close this section by extrapolating Theorem~\ref{Traces: Thm: All traces coincide and are bounded} to an admissible range of $\IL^p$-spaces in the geometric setting from above. This will be important for regularity theory, see Section~\ref{Sec: Elliptic regularity for DBC}. 
	
	\begin{theorem} \label{Traces: Thm: Extension to Lp}
		Let $\ell \in (d-2, d)$, assume that $O$ is locally uniform near $\partial O \setminus D$, $\Sigma$ is an upper $\ell$-set and let $\ext$ be the extension operator from \cite{Extension_Operator}. Then the following hold true for all $p \in ((d - \ell) \vee 1, d)$ and $q \in [p, \nicefrac{\ell p}{(d-p)})$: 
		
		\begin{enumerate}
			\item We have 
			\begin{equation*}
				\tr_{\glob, \Sigma} (\ext u) = \tr_{\mathrm{int}, \Sigma} (u) = \tr (u) \qquad (u\in  \W^{1,p}_D(O) \cap V)
			\end{equation*}
			and all three operators admit bounded extensions from $\W_D^{1,p}(O)$ into $\L^q(\Sigma)$.
			
			\item  If $\Sigma$ is bounded, then the operators in (i) are compact.
			
			\item  If $O$ is bounded, then $j$ has a compact extension from $\W^{1,p}_D(O)$ into $\IL^q$. 
		\end{enumerate}
	\end{theorem}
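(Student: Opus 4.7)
My plan is to follow the blueprint of Theorem~\ref{Traces: Thm: All traces coincide and are bounded}, systematically replacing its $\L^2$-based ingredients by their $\L^p$-counterparts. The condition $p > d-\ell$ keeps us in the range where $\W^{1,p}$-functions on $\R^d$ possess Lebesgue points $\cH^\ell$-a.e., while the sharp upper bound $q < \nicefrac{\ell p}{(d-p)}$ is dictated by a Besov--Sobolev embedding for traces on $\ell$-sets. The three operators will be shown to coincide on a dense core on which they reduce to pointwise restrictions, and the uniform estimate will be obtained by factoring through $\W^{1,p}(\R^d)$ via the extension operator.

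For (i), the extension operator from \cite{Extension_Operator} gives a bounded map $\ext \colon \W^{1,p}_D(O) \to \W^{1,p}(\R^d)$ throughout the admissible range. Since $p > d-\ell$, Hedberg's capacity theory \cite[Thm.~6.2.1 \& Thm.~5.1.13]{Hedberg} guarantees that every $u \in \W^{1,p}(\R^d)$ has Lebesgue points off a set of $\cH^\ell$-measure zero, so $\tr_{\glob, \Sigma}(\ext u)$ is defined $m$-a.e.\ on $\Sigma$. The measure density estimate in Proposition~\ref{Traces: Prop: Degenerate ITC} depends only on the extension property of $V$, hence is available under our geometric hypothesis, and therefore Corollary~\ref{Traces: Cor: Tr_I = Tr_G for Lebesgue point}~\ref{Traces: Cor: Tr_I = Tr_G for Lebesgue point: i} identifies $\tr_{\glob, \Sigma}(\ext u) = \tr_{\mathrm{int}, \Sigma}(u)$ $m$-a.e. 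For the boundedness I will invoke the Jonsson--Wallin trace theorem \cite[Chap.~VI, Thm.~1]{Jonsson-Wallin}, which maps $\W^{1,p}(\R^d)$ continuously into the Besov space $\rB^{p,p}_{\beta}(\Sigma)$ with smoothness $\beta = 1 - \nicefrac{(d-\ell)}{p} > 0$, followed by the Besov--Sobolev embedding on the $\ell$-dimensional measure space $(\Sigma, m)$ that reaches exactly the target scale $\nicefrac{\ell p}{(d-p)}$. The three operators coincide on the dense subspace $\smoothD[O] \subseteq \W^{1,p}_D(O) \cap V$—where $\tr$ acts as an honest pointwise restriction by Corollary~\ref{Traces: Cor: Tr_I = Tr_G for Lebesgue point}~\ref{Traces: Cor: Tr_I = Tr_G for Lebesgue point: ii}—so their unique bounded $\L^q(\Sigma)$-valued extensions agree throughout $\W^{1,p}_D(O)$; on $\W^{1,p}_D(O) \cap V$ the limit also matches $\tr$ in the sense of Theorem~\ref{Traces: Thm: All traces coincide and are bounded}.

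For (ii), I will copy the cutoff argument from the proof of Theorem~\ref{Traces: Thm: All traces coincide and are bounded}: fix an open ball $B \supseteq \overline{\Sigma}$ and $\eta \in \smooth[B]$ with $\eta \equiv 1$ on $\overline{\Sigma}$, so that $\tr_{\glob,\Sigma} \circ \ext = \tr_{\glob,\Sigma} \circ (\eta \ext)$, and factorize through the compact embedding $\W^{1,p}_{\partial B}(B) \hookrightarrow \rB^{p,p}_{1-\varepsilon}(\R^d)$ from \cite[Cor.~2.96]{FA_and_NLPDE_book}, choosing $\varepsilon > 0$ small enough that $1-\varepsilon > \nicefrac{(d-\ell)}{p}$ still lets the Jonsson--Wallin-type theorem deliver continuity $\rB^{p,p}_{1-\varepsilon}(\R^d) \to \L^q(\Sigma)$. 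For (iii) I combine the compact trace from (ii) with the first component of $j$: a Rellich--Kondrachov-type compact embedding $\W^{1,p}_D(O) \hookrightarrow \L^q(O)$, which is available because $q < \nicefrac{\ell p}{(d-p)} < \nicefrac{dp}{(d-p)}$ lies strictly below the Sobolev exponent on $O$, and is obtained by the same extend-and-cutoff trick through the compact Sobolev embedding on a bounded Lipschitz overset of $\overline{O}$.

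The principal obstacle will be calibrating the Jonsson--Wallin machinery—developed primarily for two-sided $\ell$-sets—to our one-sided upper $\ell$-set assumption, and pinning down the precise Besov smoothness $\beta = 1 - \nicefrac{(d-\ell)}{p}$ together with the subsequent self-embedding on $\Sigma$ so that the target exponent $\nicefrac{\ell p}{(d-p)}$ appears cleanly without losses. Once these embeddings are in place, the density, coincidence, and compactness-by-factorization steps are routine.
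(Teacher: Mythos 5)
Your overall plan (extend, Besov embed, trace, factorize through a compact embedding for (ii) and (iii)) is the right architecture, and your parts (ii) and (iii) are structurally in line with the paper. The problem is in the central factorization you propose for part (i). You want to run the chain
\begin{equation*}
\W^{1,p}(\R^d) \xrightarrow{\;\tr\;} \rB^{p,p}_\beta(\Sigma) \hookrightarrow \L^q(\Sigma), \qquad \beta = 1 - \tfrac{d-\ell}{p},
\end{equation*}
applying the Jonsson--Wallin trace theorem first to land in a Besov space \emph{on} $\Sigma$, and then invoking a Besov--Sobolev embedding on the metric measure space $(\Sigma, m)$. The numerology indeed reproduces the exponent $q < \nicefrac{\ell p}{(d-p)}$, but both arrows presuppose that $\Sigma$ is a genuine (two-sided) $\ell$-set: the Jonsson--Wallin Besov spaces on $\Sigma$ and their trace/extension theory, as well as any self-embedding $\rB^{p,p}_\beta(\Sigma) \hookrightarrow \L^q(\Sigma)$, are built on the two-sided mass bound. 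The standing assumption here is only that $\Sigma$ is an \emph{upper} $\ell$-set, with no lower bound available. You acknowledge this at the end as ``the principal obstacle,'' but it is not a calibration issue to be ironed out; it is the reason the paper takes a different route, and your argument does not close this gap.

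The paper avoids Besov spaces on $\Sigma$ entirely by performing the Besov embedding on the ambient $\R^d$ and then tracing directly into a Lebesgue space. With $s := 1 - (\nicefrac{d}{p} - \nicefrac{d}{q}) \in (0,1]$ it uses
\begin{equation*}
\W^{1,p}_D(O) \xrightarrow{\;\ext\;} \W^{1,p}(\R^d) \subseteq \rB^{q,q}_s(\R^d) \xrightarrow{\;\tr_{\glob,\Sigma}\;} \L^q(\Sigma).
\end{equation*}
The middle inclusion is the standard Besov--Sobolev embedding on $\R^d$ (same differential dimension, $q \geq p$), and the final arrow is the Jonsson--Wallin trace $\rB^{q,q}_s(\R^d) \to \L^q(\Sigma)$, which only needs the upper $\ell$-condition when $s > \nicefrac{(d-\ell)}{q}$ --- exactly what the hypothesis $q < \nicefrac{\ell p}{(d-p)}$ delivers. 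Since the intermediate Besov space still lives on $\R^d$, no two-sided mass bound is required. Part (ii) then factorizes through the compact embedding $\W^{1,p}_{\partial B}(B) \hookrightarrow \rB^{q,q}_{s-\varepsilon}(\R^d)$ and the continuity of $\tr_{\glob,\Sigma}$ on $\rB^{q,q}_{s-\varepsilon}(\R^d)$ for small $\varepsilon > 0$, again without ever leaving $\R^d$. If you re-route your factorization in (i) and (ii) accordingly, the rest of your write-up goes through.
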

	
	\begin{proof}
		We follow the proof of Theorem~\ref{Traces: Thm: All traces coincide and are bounded} and explain all necessary modifications. 
		
		\textbf{Proof of (i).} Let $s \coloneqq 1 - ( \nicefrac{d}{p} - \nicefrac{d}{q}) \in (0,1]$. Our choice of $p$ and $q$ implies that $1 - \nicefrac{d}{p} = s - \nicefrac{d}{q}$ and $s > \nicefrac{(d- \ell)}{q}$. By \cite[Thm.~1.2]{Extension_Operator}, \cite[Prop.~2.71]{FA_and_NLPDE_book} and \cite[Chap.~VI, Thm.~1 \& Rem.~1]{Jonsson-Wallin}, we have the chain of continuous operators
		\begin{equation*}
			\W^{1,p}_D(O) \overset{\ext}{\longrightarrow} \W^{1,p}(\R^d) \sub \rB^{q,q}_s(\R^d)  \overset{\tr_{\glob, \Sigma}}{\longrightarrow} \L^q(\Sigma).
		\end{equation*}
		Thus, $\tr_{\glob, \Sigma} \circ \ext \colon \W_D^{1,p}(O) \to \L^q(\Sigma)$ is bounded. 
		
		\textbf{Proof of (ii).} We only need to observe that \cite[Chap.~VI, Thm.~1]{Jonsson-Wallin} also gives continuity of $\tr_{\glob,\Sigma} \colon \rB^{q,q}_{s - \varepsilon}(\R^d) \to \L^q(\Sigma)$ for sufficiently small $\varepsilon>0$. Using \cite[Cor.~2.96]{FA_and_NLPDE_book}, we conclude that $\tr_{\Sigma} \circ \ext$ factorizes through the compact embedding $\W^{1,p}_{\partial B}(B) \sub \rB^{q, q}_{s- \varepsilon}(\R^d)$.
		
		\textbf{Proof of (iii).} This is the same argument as in the proof of Corollary~\ref{Traces: Cor: j compact} with $(V, \L^2(O))$ replaced by $(\W^{1,p}_D(O), \L^q(O))$. 
	\end{proof}
	

	\section{\texorpdfstring{$\L^2$}{}-theory} \label{Sec: L2-theory}
	
	This section contains all relevant definitions and proofs for $p=2$.
	
	\subsection{The extended form method} \label{Subsec: L2 realization of L}
	
	Let $A \colon O \to \C^{d \times d}$ be uniformly strongly elliptic as in \eqref{eq: Intro: ellipticity}. We consider the bounded sesquilinear form
	\begin{equation*}
		a\colon V \times V \to \C, \quad a(u,v) \coloneqq \int_O A \nabla u \cdot \overline{\nabla v} \, \d x, 
	\end{equation*}
	as in \eqref{eq: Intro: a}. Ellipticity implies that $a$ is \emph{$\boldsymbol{j}$-elliptic} in the sense of \cite{Arendt_Elst_j-elliptic_forms}, because 
	\begin{equation*}
		\re a(u,u) + \lambda \|j(u)\|_2^2 \geq \lambda \|u\|_V^2 \qquad (u \in V).
	\end{equation*}
	Moreover, $a$ is \emph{sectorial} and
	\begin{equation*}
		0 \leq \omega(A) \coloneqq  \underset{x \in O}{\esssup} \sup_{|\xi| =1}| \arg(A(x) \xi \cdot \overline{\xi})| \leq \arccos \big(\tfrac{\lambda(A)}{\Lambda(A)} \big) < \tfrac{\pi}{2},
	\end{equation*}
	see also~\cite{Elst-Linke-Rehberg_Numerical_range}.
	The associated operator in $\IL^2$ is defined as follows. We say $u \in \D(\LL^A)$ if and only if there are $\LL^A u \in \IL^2$ and $w \in V$ such that $j(w) = u$ and
	\begin{equation*}
		\langle \LL^A u , j(v) \rangle = a(w,v) \qquad (v \in V).
	\end{equation*}
	Note that $\LL^A u$ is unique since $j$ has dense range by Lemma~\ref{Traces: Lem: j(V) dense}. Abstract theory of sectorial forms~\cite[Thm.~2.1~(ii)]{Arendt_Elst_j-elliptic_forms} implies that $-\LL^A$ generates a strongly continuous semigroup $(\e^{-t \LL^A})_{t \geq 0}$ that extends to an analytic contraction semigroup of angle $\nicefrac{\pi}{2}-\omega(A)$. Hence, $\e^{-z \LL^A}$ is defined for $z \in \S_{\mu}$ with $\mu \in [0, \nicefrac{\pi}{2}-\omega(A))$, where we use the notation
	\begin{equation*}
		\S_{\mu} \coloneqq \{ z \in \C\setminus \{ 0\} \colon |\arg(z)| < \mu \} \quad \& \quad \S_0 \coloneqq (0, \infty)
	\end{equation*}
	for sectors in the complex plane. Let us also remark that $\LL^A$ admits compact resolvents if $j \colon V \to  \IL^2$ is compact~\cite[Lem.~2.7]{Arendt_Elst_j-elliptic_forms}. This happens in the setting of Theorem~\ref{Traces: Thm: All traces coincide and are bounded} when $O$ is bounded, compare with Corollary~\ref{Traces: Cor: j compact}.
	
	\subsection{\texorpdfstring{A formula for \boldmath$\LL^A$}{}} \label{Subsec: Formula for L via Lax-Milgram}
	
	The `Lax--Milgram' operator associated with the form $a$ is defined as 
	\begin{equation*}
		\cL \colon V \to V^*, \quad \langle \cL u \, | \, v \rangle = a(u,v) \qquad (u,v \in V).
	\end{equation*}
	Just as $a$, this operator is independent of the boundary dynamics and it is natural to ask for a formula relating $\LL^A$ and $\cL$. If $\cL$ is invertible, then $j$ and its adjoint provide the link between $\cL^{-1}$ and $(\LL^A)^{-1}$ as in the following lemma. It has nothing to do with the concrete choices of $a$, $V$ and $j$, and is valid in the general $j$-elliptic framework of \cite{Arendt_Elst_j-elliptic_forms}. 
	
	\begin{lemma} \label{Formula for L via L-M: Lem: Repres via j}
		Suppose that $\cL$ is invertible. Then, so is $\LL^A$ and 
		\begin{equation*}
			(\LL^A)^{-1} = j \cL^{-1} j^*.
		\end{equation*}
	\end{lemma}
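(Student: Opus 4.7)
The plan is to verify directly that $T \coloneqq j \cL^{-1} j^* \colon \IL^2 \to \IL^2$ is a two-sided inverse of $\LL^A$, which gives invertibility and the claimed formula in one stroke. Here $j^* \colon \IL^2 \to V^*$ is the Hilbert-space adjoint of $j$, characterized by $\langle j^* f \, | \, v \rangle = \langle f, j(v) \rangle$ for all $f \in \IL^2$ and $v \in V$. Conceptually, the identity is a bookkeeping statement: both $\cL$ and $\LL^A$ are extracted from the same sesquilinear form $a$, so one only needs to translate between their respective defining relations. The only point requiring attention is to keep track of pairing conventions (inner product on $\IL^2$ versus dual pairing on $V^* \times V$), and to invoke density of $j(V)$ in $\IL^2$ at the correct moment.

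First I would establish $\LL^A T = I$ on $\IL^2$. Fix $f \in \IL^2$, set $w \coloneqq \cL^{-1} j^* f \in V$ and $u \coloneqq j(w) = Tf$. For every $v \in V$,
\begin{equation*}
    \langle f, j(v) \rangle = \langle j^* f \, | \, v \rangle = \langle \cL w \, | \, v \rangle = a(w, v),
\end{equation*}
which is precisely the defining relation of $\LL^A$ from Subsection~\ref{Subsec: L2 realization of L}, with $w$ serving as the lift of $u$ and $f$ playing the role of $\LL^A u$. Uniqueness of $\LL^A u$, guaranteed by density of $j(V)$ in $\IL^2$ (Lemma~\ref{Traces: Lem: j(V) dense}), then forces $u \in \D(\LL^A)$ and $\LL^A T f = f$.

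For the converse $T \LL^A = I$ on $\D(\LL^A)$, I would take $u \in \D(\LL^A)$ together with a lift $w \in V$ satisfying $j(w) = u$ and $\langle \LL^A u, j(v) \rangle = a(w, v)$ for all $v \in V$. Rewriting both sides through $j^*$ and $\cL$ yields $\langle j^* \LL^A u \, | \, v \rangle = \langle \cL w \, | \, v \rangle$ for all $v \in V$, hence $\cL w = j^* \LL^A u$ in $V^*$. Applying $\cL^{-1}$ and then $j$ gives $u = j(w) = T \LL^A u$. Combined with the previous step, this shows that $\LL^A \colon \D(\LL^A) \to \IL^2$ is bijective with inverse $T$, completing the proof. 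There is no real obstacle here; the argument is a short definition chase, and invertibility of $\cL$ is used only to write $w = \cL^{-1} j^* \LL^A u$ in the second half.
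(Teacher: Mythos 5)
Your proposal is correct and follows essentially the same route as the paper: the paper proves injectivity and surjectivity of $\LL^A$ separately and reads off the formula from the surjectivity step, while you package the identical computations as $\LL^A T = I$ and $T\LL^A = I$ for $T = j\cL^{-1}j^*$; the definition-chasing and the use of density of $j(V)$ are the same. (Minor quibble: invertibility of $\cL$ is already used in the first half merely to make $T$ well-defined, so the closing remark that it is "used only \dots in the second half" is a little loose, but this does not affect the argument.)
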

	
	\begin{proof}
		To prove that $\LL^A$ is injective, we let $u \in \D(\LL^A)$ with $\LL^A u =0$. Then $u = j(w)$ for some $w \in V$ and 
		\begin{equation*}
			0 = a(w,v) = \langle \cL w \, | \, v \rangle \qquad (v \in V). 
		\end{equation*}
		Hence, $\cL w =0$ and as $\cL$ is injective, we conclude that $w = 0$. Consequently, $u = j(w) =0$.
		
		As for surjectivity, we take any $f \in \IL^2$. Since $\cL$ is surjective, we find $w \in V$ such that $\cL w = j^* (f)$, that is 
		\begin{equation*}
			\langle f, j(v) \rangle = \langle j^* (f) \, | \, v \rangle = \langle \cL w \, | \, v \rangle = a(w,v) \qquad (v \in V). 
		\end{equation*}
		This means that $j(w) \in \D(\LL^A)$ with $\LL^A j(w) = f$. In total, $\LL^A$ is invertible with inverse $(\LL^A)^{-1} f = j (w) = j \cL^{-1} j^* f$.  
	\end{proof}
	
	\begin{remark} \label{Formula for L via L-M: Rem: Coercive a} By standard form theory, $\cL$ is invertible if $a$ is \emph{coercive}, that is, if there is $C > 0$ such that 
	\begin{equation*}
		\re a(u,u) \geq C \| u \|_V^2 \qquad (u \in V),
	\end{equation*}
	or, equivalently, if $V$ admits the global Poincaré inequality $\| u \|_2 \leq C \| \nabla u \|_2$ for some $C > 0$ and all $u \in V$. If $O$ is a bounded domain and $V$ has the extension property, then Poincar\'e's inequality always holds unless $V = \W^{1,2}_{\varnothing}(O)$ models good Neumann boundary conditions~\cite[Lem.~6]{Egert_p-ellipticity}.
	\end{remark}
	
	Lemma~\ref{Formula for L via L-M: Lem: Repres via j} allows us to transfer mapping properties from the `non-dynamical' operator $\cL^{-1}$ to the `dynamical operator' $(\LL^A)^{-1}$. We shall give a striking application in Section~\ref{Sec: Elliptic regularity for DBC}. Let us stress that for the functional calculus of $\LL^A$ no such simple transference can be used (and hence the results in this paper are non-trivial). Indeed, already for resolvents we do not have the formula 
	\begin{equation*}
		(t + \LL^A)^{-1} = j (t + \cL)^{-1} j^* \qquad (t > 0), 
	\end{equation*}
	because the form corresponding to the left-hand side is $a(\cdot, \cdot) + t \langle j(\cdot), j(\cdot) \rangle_{\IL^2}$ and not just $a(\cdot, \cdot) + t \langle \cdot, \cdot \rangle_{\L^2(O)}$.

	\subsection{\texorpdfstring{The bilinear embedding for \boldmath$p=2$}{}} \label{Subsec: Bilinear estimate, p=2}
	
	We find it instructive to give an elementary proof of Theorem~\ref{Intro: Thm: BE} in the case $p=2$ first. We fix $f, g \in \IL^2$ and for $t > 0$ we abbreviate
	\begin{equation*}
		f_t^A \coloneqq \e^{- t \LL^A} f  \quad \& \quad g_t^B \coloneqq \e^{-t \LL^B} g.
	\end{equation*}
	We consider the power function
	\begin{equation*}
		Q(\zeta, \eta) \coloneqq |\zeta|^2 + |\eta|^2 \qquad (\zeta, \eta \in \C)
	\end{equation*}
	and define the corresponding heat flow as
	\begin{equation*}
		\energy(t) \coloneqq \int_{O \cup \Sigma} Q(f_t^A, g_t^B ) \, \d \mu = \| f_t^A \|_2^2 + \| g_t^B \|_2^2.
	\end{equation*}
	Since $(\e^{-t \LL^A})_{t \geq 0}$ and $(\e^{-t \LL^B})_{t \geq 0}$ are bounded analytic $\rC_0$-semigroups in $\IL^2$, we obtain $\energy \in \rC[0, \infty) \cap \rC^1(0, \infty)$ and
	\begin{equation*}
		- \energy'(t) = 2 \re \int_{O \cup \Sigma} \LL^A f_t^A \cdot \overline{f_t^A} + \LL^B g_t^B \cdot \overline{g_t^B} \, \d \mu.
	\end{equation*}
	Analyticity of the semigroups also entails that $f_t^A \in \D(\LL^A)$ and $g_t^B \in \D(\LL^B)$. In particular, $f_t^A, g_t^B \in j(V)$ with $f_t^A = j(f_t^A|_O)$ and $g_t^B = j( g_t^B|_O)$. Hence, ellipticity and the elementary inequality $2 XY \leq X^2 + Y^2$ yield the lower bound
	\begin{align*}
		- \energy'(t) &= 2  \re \int_O A \nabla (f_t^A|_O) \cdot \overline{\nabla (f_t^A|_O)} + B \nabla (g_t^B|_O) \cdot \overline{\nabla (g_t^B|_O)} \, \d x
		\\&\geq 2 \lambda(A, B) \int_O \big|\nabla(f_t^A|_O)\big|^2 + \big|\nabla(g_t^B|_O)\big|^2 \, \d x
		\\&\geq 4 \lambda(A, B) \int_O \big| \nabla(f_t^A|_O)\big| \cdot \big|\nabla(g_t^B|_O)\big| \, \d x.
	\end{align*}
	Integration in $t \in [0,T]$ leads us to the estimate
	\begin{equation*}
		4 \lambda(A, B) \int_0^T \int_O \big| \nabla(f_t^A|_O)\big| \cdot \big|\nabla(g_t^B|_O)\big| \, \d x \, \d t \leq \energy(0) - \energy(T) \leq \energy(0).
	\end{equation*}
	Sending $T \to \infty$, we obtain
	\begin{equation*}
		4 \lambda(A, B) \int_0^{\infty} \int_O \big| \nabla(f_t^A|_O)\big| \cdot \big|\nabla(g_t^B|_O)\big| \, \d x \, \d t \leq \energy(0) = \| f \|_2^2 + \| g \|_2^2.
	\end{equation*}
	Finally, we replace the pair $(f,g)$ by $(sf, s^{-1} g)$ for $s > 0$ and optimize in $s$ to deduce the bilinear estimate
	\begin{equation*}
		4 \lambda(A, B) \int_0^{\infty} \int_O \big| \nabla(f_t^A|_O)\big| \cdot \big|\nabla(g_t^B|_O)\big| \, \d x \, \d t \leq 2 \| f \|_2 \| g \|_2.
	\end{equation*}
	The main obstacle in generalizing this argument to $p \neq 2$ lies in finding the correct $p$-adapted version of $Q$ and getting $p$-adapted estimates of the semigroups from above and below. Here, the $p$-ellipticity assumption on $A$ and $B$ plays a key role.
	
	\section{\texorpdfstring{$p$-ellipticity and $\L^p$-contractivity of the semigroup}{}} \label{Sec: Lp-contractivity}
	
	
	\subsection{\texorpdfstring{Facts about \boldmath$p$-ellipticity}{}} \label{Subsec: p-ellipticity}
	
	Let us recall that $A$ is elliptic with parameters $\lambda(A), \Lambda(A)$ as in \eqref{eq: Intro: ellipticity} and that it is $p$-elliptic if \eqref{eq: Intro: p-ellipticity} holds true. The following elementary properties will be useful throughout this work, see \cite[Sec.~1.2 \& Sec.~5.3]{CD_JEMS}.

	\begin{lemma} \label{p-ellipticity: Lem: Basics}
		The following hold true.
		
		\begin{enumerate}
			\item\label{p-ellipticity: Lem: Basics, L2} $\Delta_2(A) = \lambda(A)$.
			
			\item\label{p-ellipticity: Lem: Basics, duality in A} $\Delta_p(A^*) \geq \Delta_p(A) (\nicefrac{p}{p'} \land \nicefrac{p'}{p})$.
			
			\item\label{p-ellipticity: Lem: Basics, duality in p}  $\Delta_p(A) = \Delta_{p'}(A)$.
			
			\item\label{p-ellipticity: Lem: Basics, A real} $A$ is real-valued if and only if $\Delta_p(A) > 0$ for all $p \in (1, \infty)$.
			
			\item\label{p-ellipticity: Lem: Basics, Delta Lip}  The map $p \mapsto \Delta_p(A)$ is decreasing and Lipschitz continuous on $[2, \infty)$.
			
			\item\label{p-ellipticity: Lem: Basics, Delta cont in A}  The map $\theta \mapsto \Delta_p(\e^{\i \theta} A)$ is continuous on $(- \nicefrac{\pi}{2}, \nicefrac{\pi}{2})$.
		\end{enumerate}
	\end{lemma}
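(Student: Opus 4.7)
The statement consists of six routine but distinct assertions, all of which can be extracted from elementary manipulations of the algebraic form
\begin{equation*}
 f_{A,\alpha}(\xi) \coloneqq \re \big(A\xi \cdot \overline{\xi + \alpha \overline{\xi}}\big) = \re(A\xi \cdot \overline{\xi}) + \alpha \re(A\xi \cdot \xi), \qquad \alpha \coloneqq 1 - \tfrac{2}{p} \in (-1,1),
\end{equation*}
so that $\Delta_p(A) = \essinf_x \min_{|\xi|=1} f_{A(x),\alpha}(\xi)$. The plan is to treat the items in the order (i), (iii), (v), (ii), (iv), (vi), chaining each onto the previous ones.

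For (i), I would just substitute $p=2$ so that $\alpha = 0$ and $f_{A,0}(\xi) = \re(A\xi\cdot\overline{\xi})$, which matches the definition of $\lambda(A)$ in \eqref{eq: Intro: ellipticity}. For (iii), I would note that passing from $p$ to $p'$ flips the sign of $\alpha$, and then observe that substituting $\xi \to -\i \xi$ preserves $\re(A\xi\cdot\overline{\xi})$ while sending $\re(A\xi\cdot\xi)$ to its negative; hence minimizing $f_{A,\alpha}$ and $f_{A,-\alpha}$ over $|\xi|=1$ yields the same value. With (iii) in hand, (v) is almost a consequence of concavity: for each fixed $x$ the map $\alpha \mapsto \min_{|\xi|=1} f_{A(x),\alpha}$ is the infimum of a family of affine functions in $\alpha$, hence concave on $(-1,1)$, and (iii) says it is even; a concave even function on $(-1,1)$ is non-increasing on $[0,1)$, which gives monotonicity in $p$ on $[2,\infty)$. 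For the Lipschitz bound I would use $|\partial_\alpha f_{A,\alpha}(\xi)| = |\re(A\xi\cdot\xi)| \leq \Lambda(A)$ uniformly in $\xi$ and $x$, combined with $|\mathrm{d}\alpha/\mathrm{d}p| = 2/p^2 \leq \tfrac{1}{2}$ on $[2,\infty)$.

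For (ii), the key identity is $\re(A^*\xi \cdot \overline{\eta}) = \re(A\eta \cdot \overline{\xi})$; setting $\eta = \xi + \alpha \overline{\xi}$ and inverting this relation (using $\xi = (1-\alpha^2)^{-1}(\eta - \alpha \overline{\eta})$) converts $\min_{|\xi|=1} f_{A^*,\alpha}(\xi)$ into a bound in terms of $\Delta_p(A)$. The losing factor $(1-\alpha^2) = 4/(pp')$ after dividing by $|\eta|^2 \leq (1+|\alpha|)^2$ produces exactly the constant $\nicefrac{p}{p'} \wedge \nicefrac{p'}{p}$. For (iv), I would decompose $\xi = u + \i v$ with $u,v \in \R^d$; for real $A$ a direct expansion gives
\begin{equation*}
 f_{A,\alpha}(\xi) = (1+\alpha)\, Au \cdot u + (1-\alpha)\, Av \cdot v = \tfrac{2}{p'} Au\cdot u + \tfrac{2}{p} Av\cdot v,
\end{equation*}
which is bounded below by $2\lambda(A)\min(\nicefrac{1}{p},\nicefrac{1}{p'})(|u|^2+|v|^2)$, proving $\Delta_p(A) > 0$ for every $p \in (1,\infty)$. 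For the converse I would split $A = R + \i I$ into real and imaginary parts, expand $f_{A,\alpha}$ in $u,v$, and let $\alpha \uparrow 1$ (i.e.\ $p \to \infty$) and $\alpha \downarrow -1$ ($p \to 1$) to force the cross-terms involving $I$ to vanish, concluding $I = 0$.

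Finally, for (vi) the integrand $\re(\e^{\i\theta}A(x)\xi \cdot \overline{\xi + \alpha\overline{\xi}})$ is Lipschitz in $\theta$ with constant $2\Lambda(A)$ uniformly in $|\xi|=1$ and $x$; taking the minimum over the compact set $\{|\xi|=1\}$ preserves this Lipschitz bound, and the essential infimum over $x$ of a family of functions that are equi-Lipschitz in $\theta$ is itself Lipschitz in $\theta$. I expect the main technical nuisance to be item~(ii), where the change of variables must be handled carefully so that the resulting ratio of norms collapses to $\nicefrac{p}{p'} \wedge \nicefrac{p'}{p}$; the remaining steps are bookkeeping and could alternatively be referenced directly from \cite[Sec.~1.2~\&~Sec.~5.3]{CD_JEMS}.
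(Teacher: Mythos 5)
The paper does not actually prove this lemma; it is stated as a collection of elementary facts with a pointer to Sections~1.2 and~5.3 of Carbonaro--Dragi\v{c}evi\'c (\cite{CD_JEMS}), so there is no ``paper's own proof'' against which to measure you. What you have written is a self-contained direct verification of all six items, and it is essentially the same sort of algebraic bookkeeping one finds in the cited reference. Items (i), (iii), (v), (vi) and the forward direction of (iv) are correct as written; in particular the concavity-plus-evenness trick for the monotonicity in (v), the chain-rule bound $|\d\alpha/\d p| = 2/p^{2} \leq 1/2$ for the Lipschitz estimate, and the observation that $\essinf$ of an equi-Lipschitz family is Lipschitz are all clean.

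Two smaller points deserve attention. First, in item~(ii), the direction of the bound on $|\eta|$ that you actually need is the \emph{lower} bound $|\eta|^2 \geq (1-|\alpha|)^2$: after writing $f_{A^*,\alpha}(\xi) = (1-\alpha^2)^{-1} f_{A,-\alpha}(\eta)$ with $\eta = \xi + \alpha\overline\xi$ and factoring $f_{A,-\alpha}(\eta) = |\eta|^2 f_{A,-\alpha}(\eta/|\eta|) \geq |\eta|^2\,\Delta_{p'}(A)$ (valid provided $\Delta_p(A)\geq 0$ and using (iii)), it is the lower bound $|\eta|\geq 1-|\alpha|$ that collapses $(1-\alpha^2)^{-1}(1-|\alpha|)^2 = \frac{1-|\alpha|}{1+|\alpha|}$ to $\min(\nicefrac{p}{p'},\nicefrac{p'}{p})$, not the upper bound $|\eta|\leq 1+|\alpha|$ you quoted. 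This appears to be only a typo in the write-up, since you state the correct final constant. Note also that this argument covers the regime $\Delta_p(A)\geq 0$, which is the only case the lemma is ever invoked in. Second, in the converse direction of (iv), ``letting the cross-terms involving $I$ vanish'' is shorthand that should be unpacked: from $f_{A,1}(\xi) = 2(Ru\cdot u - Iv\cdot u) \geq 0$ for all $u,v\in\R^d$ (using homogeneity to drop the constraint $|u|^2+|v|^2=1$), the scaling $v\mapsto tv$ with $t\to\pm\infty$ forces $Iv\cdot u = 0$ for all $u,v$, hence $I=0$; the endpoint $\alpha=-1$ then contributes nothing new. With those clarifications the proposal is a valid, complete proof.
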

	
	\subsection{Nittka's invariance criterion \texorpdfstring{for $\boldsymbol{j}$-elliptic forms}{}}
	
	We denote by $P_q$ the projection from $\IL^2$ onto
	\begin{equation*}
		B_q \coloneqq \{ u \in \IL^q \cap \IL^2 \colon \Vert u \Vert_q \leq 1 \}.
	\end{equation*}
	It is convex and closed by Fatou's lemma. The following invariance criterion of closed and convex sets is due to Ouhabaz \cite{Ouhabaz_Invariance}. It continuous to hold in the $j$-elliptic setting \cite[Prop.~2.9~(i),(ii)]{Arendt_Elst_j-elliptic_forms}, see also \cite[Rem.~4.2]{Nittka}. For our injective $j$, it takes the following, simpler form.
	
	\begin{proposition} \label{Lp contractivity: Prop: Ouhabaz j-elliptic}
		Let $q \in [1, \infty]$. The following assertions are equivalent.
		
		\begin{enumerate}
			\item  The semigroup $(\e^{-t \LL^A})_{t \geq 0}$ is $\IL^q$-contractive, that is
			\begin{equation*}
				\| \e^{- t \LL^A} f \|_q \leq \| f \|_q \qquad (t \geq 0, f \in \IL^q \cap \IL^2).
			\end{equation*}
			\item The set $j(V)$ is invariant under $P_q$ and $\re a (j^{-1}(P_q u), j^{-1}(u - P_q u)) \geq 0$ for all $u \in j(V)$.
		\end{enumerate}
	\end{proposition}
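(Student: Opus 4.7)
The plan is to derive this proposition from the general invariance criterion for sectorial $j$-elliptic forms of Arendt and ter Elst~\cite[Prop.~2.9]{Arendt_Elst_j-elliptic_forms}, applied to the convex set $B_q \sub \IL^2$. Its closedness in $\IL^2$ has already been noted (Fatou's lemma applied to the hybrid measure $\d x \oplus \d m$), and convexity is immediate from the convexity of the $\IL^q$-norm, so $B_q$ admits a well-defined $1$-Lipschitz orthogonal projection $P_q$ from $\IL^2$. Since $B_q$ is balanced and centred at the origin, its invariance under the semigroup is equivalent, by homogeneity, to $\IL^q$-contractivity on $\IL^q \cap \IL^2$, that is to condition~(i).

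The Arendt--ter Elst criterion, specialised to the sectorial form $a$ defined on $V$, asserts that this invariance holds if and only if for every $u \in V$ there exists $v \in V$ with $j(v) = P_q j(u)$ and $\re a(v, u-v) \geq 0$. The remaining task is to rewrite this existential form-condition in the clean shape of (ii), and this is where the structure of the identification operator $j$ intervenes.

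The key observation is that $j \colon V \to \IL^2$ is \emph{injective}: indeed, the elements of $V = \W_D^{1,2}(O)$ are equivalence classes of functions on $O$, so $j(u) = u|_O \oplus \tr(u) = 0$ already forces $u = 0$ at the level of $u|_O$. Writing $x \coloneqq j(u)$ and letting $x$ range over $j(V)$, the existence part of the general criterion becomes the invariance $P_q(j(V)) \sub j(V)$, and the pre-image $v \in V$ is then uniquely determined to be $v = j^{-1}(P_q x)$. Linearity of $j^{-1}$ on $j(V)$ yields $u - v = j^{-1}(x - P_q x)$, so that the inequality reads
\begin{equation*}
\re a\bigl(j^{-1}(P_q x),\, j^{-1}(x - P_q x)\bigr) \geq 0 \qquad (x \in j(V)),
\end{equation*}
which is precisely condition~(ii).

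I do not anticipate any serious analytic obstacle: all the hard work has been packaged into the general $j$-elliptic invariance theorem, and the role of this proposition is mainly structural. The only delicate point is to keep track of the interplay between $V$ and its image $j(V) \sub \IL^2$; the a priori existential quantifier on pre-images $v \in V$ in the Arendt--ter Elst criterion obstructs a direct reading of (ii), and it is precisely the injectivity of the identification operator that removes this quantifier and legitimises the short-hand $j^{-1}$.
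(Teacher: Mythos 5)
Your proof is correct and follows essentially the same route as the paper: the authors do not give an independent proof of Proposition~\ref{Lp contractivity: Prop: Ouhabaz j-elliptic} but simply invoke \cite[Prop.~2.9~(i),(ii)]{Arendt_Elst_j-elliptic_forms} and remark that ``for our injective $j$, it takes the following, simpler form.'' Your write-up supplies exactly the missing elaboration — injectivity of $j$ turns the existential quantifier over pre-images $v \in V$ with $j(v) = P_q j(u)$ into the invariance statement $P_q(j(V)) \subseteq j(V)$ and the well-defined expression $j^{-1}(P_q u)$ — so it matches the paper's intended argument.
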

	
	First, we use Proposition~\ref{Lp contractivity: Prop: Ouhabaz j-elliptic} to get the invariance of $j(V)$ under $P_q$ for all $q \in [1, \infty]$ by complex interpolation and the fact that $(\e^{-t \LL^A})_{t \geq 0}$ is $\IL^{\infty}$-contractive when $A = I$ is the identity. To this end, we need the precise form of $j$. 
	
	\begin{lemma} \label{Lp contractivity: Lem: L_infinity-contractive}
		The semigroup $(\e^{-t \LL^I})_{t \geq 0}$ is $\IL^{\infty}$-contractive. In particular, $j(V)$ is invariant under $P_q$ for all $q \in [1, \infty]$.
	\end{lemma}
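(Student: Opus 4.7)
The plan is to verify the invariance criterion of Proposition~\ref{Lp contractivity: Prop: Ouhabaz j-elliptic} directly for $q = \infty$ and $A = I$, thereby obtain $\IL^\infty$-contractivity of $(\e^{-t\LL^I})_{t\geq 0}$, then promote it to $\IL^q$-contractivity for every $q \in [1,\infty]$ by self-adjointness and Riesz--Thorin interpolation, and finally read off the invariance of $j(V)$ under each $P_q$ from the forward implication of Proposition~\ref{Lp contractivity: Prop: Ouhabaz j-elliptic}.

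The first step begins by identifying $P_\infty$ as the pointwise truncation $u \mapsto \Phi \circ u$, where $\Phi \colon \C \to \C$ is the nearest-point projection onto the closed unit disk, $\Phi(z) = z/(1 \vee |z|)$, applied separately on the $O$- and $\Sigma$-components. Since $\Phi$ is $1$-Lipschitz with $\Phi(0) = 0$, Lemma~\ref{Traces: Lem: j commutes!} yields $\Phi(w) \in V$ together with $j(\Phi(w)) = \Phi(j(w)) = P_\infty j(w)$ for every $w \in V$. This already gives invariance of $j(V)$ under $P_\infty$ and reduces the form condition to proving
\begin{equation*}
\re \int_O \nabla \Phi(w) \cdot \overline{\nabla (w - \Phi(w))} \, \d x \geq 0 \qquad (w \in V).
\end{equation*}
The integrand vanishes on $\{|w| \leq 1\}$; on the open set $\{|w| > 1\}$, where $w$ stays bounded away from zero, a local polar decomposition $w = r\e^{\i \theta}$ combined with the standard chain rule for Sobolev compositions with Lipschitz maps yields
\begin{equation*}
\nabla \Phi(w) \cdot \overline{\nabla (w - \Phi(w))} = \i\, \nabla \theta \cdot \nabla r + (r-1)|\nabla \theta|^2,
\end{equation*}
whose real part $(r-1)|\nabla \theta|^2$ is pointwise non-negative. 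Proposition~\ref{Lp contractivity: Prop: Ouhabaz j-elliptic} then delivers $\IL^\infty$-contractivity of $(\e^{-t\LL^I})_{t\geq 0}$.

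For the second step, the form $a$ associated with $A = I$ is symmetric, so $\LL^I$ is self-adjoint in $\IL^2$ and the semigroup coincides with its own adjoint. Duality therefore transports $\IL^\infty$-contractivity on $\IL^\infty \cap \IL^2$ to $\IL^1$-contractivity on $\IL^1 \cap \IL^2$, and Riesz--Thorin interpolation with the $\IL^2$-contraction property gives $\IL^q$-contractivity for every $q \in [1,\infty]$. Applying the forward implication of Proposition~\ref{Lp contractivity: Prop: Ouhabaz j-elliptic} for each such $q$ yields the invariance of $j(V)$ under every $P_q$. The only delicate point in the entire argument is the pointwise sign check on $\{|w| > 1\}$; the polar representation keeps it transparent, but one could equivalently compute in Cartesian coordinates $w = u + \i v$ and reach the same conclusion.
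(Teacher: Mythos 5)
Your proposal is correct and follows essentially the same route as the paper: verify the Ouhabaz criterion for $A=I$ and $q=\infty$ via the $j$-commutation lemma, then bootstrap to all $q$ by self-adjointness plus interpolation and read invariance off the forward implication of Proposition~\ref{Lp contractivity: Prop: Ouhabaz j-elliptic}. The only presentational difference is the pointwise sign check: you compute in polar coordinates $w = r\,\e^{\i\theta}$ and land on the non-negative quantity $(r-1)|\nabla\theta|^2$ on $\{|w|>1\}$, whereas the paper invokes the weak gradient formula for $\Phi(w)$ from Ouhabaz's book and arrives at $|\Psi(w)|^2|w| - |\Psi(w)|^2$ with $\Psi(w) = \sgn(w)\,\im(\sgn(\overline w)\nabla w)/|w|$; these are the same expression, since $\Psi(w) = \e^{\i\theta}\nabla\theta$ there. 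Your $\Phi(z) = z/(1\vee|z|)$ is also identical to the paper's $(|z|\wedge 1)\sgn(z)$, so there is no substantive divergence.
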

	
	\begin{proof}
		As explained above, we only need the $\L^{\infty}$-contractivity. The projection from $\IL^2$ onto $B_{\infty}$ is given by 
		\begin{equation*}
			P_{\infty} u = (|u| \land 1) \sgn(u) \eqqcolon \Phi(u),   
		\end{equation*}
		where $\sgn(z) \coloneqq \nicefrac{z}{|z|} \1_{\{|z|\neq 0\}}$ and $\Phi(z) \coloneqq (|z| \land 1) \sgn(z)$ is Lipschitz continuous with $\Phi(0) =0$. Hence, Lemma~\ref{Traces: Lem: j commutes!} asserts that $j(V)$ is invariant under $P_{\infty}$. Let $u \in j(V)$. Since $j^{-1}$ is the pointwise restriction to $O$, we deduce 
		\begin{equation*}
			j^{-1}(P_{\infty} u) = \Phi(u|_O) \quad \& \quad j^{-1}(u - P_{\infty} u) = u|_O - \Phi( u|_O). 
		\end{equation*}
		Let $w \coloneqq u|_O$. Since $A = I$, we obtain 
		\begin{equation*}
			\re a (j^{-1}(P_{\infty} u), j^{-1}(u - P_{\infty} u)) = \re \int_O \nabla \Phi(w) \cdot \overline{(\nabla w - \nabla \Phi(w))} \, \d x.
		\end{equation*}
		By \cite[Prop.~4.11]{Ouhabaz_book}, the weak gradient of $\Phi(w)$ is given by 
		\begin{align*}
			\nabla \Phi(w) &= \i \frac{\im(\sgn(\overline{w}) \nabla (w))}{|w|} \sgn(w) \1_{\{ |w| > 1 \}} + \1_{\{ |w| \leq 1 \}} \nabla w
			\\&\eqqcolon \i \Psi(w) \1_{\{ |w| > 1 \}} + \1_{\{ |w| \leq 1 \}} \nabla w.
		\end{align*}
		Inserting this identity, we obtain 	
		\begin{align*}
			\re a (j^{-1}(P_{\infty} u), j^{-1}(u - P_{\infty} u)) &= \re \int_{\{ |w| > 1 \}} \i \Psi(w) \cdot \overline{(\nabla w - \i \Psi(w))} \, \d x
			\\&= \int_{\{ |w| > 1 \}} \frac{|\im(\sgn(\overline{w}) \nabla w)|^2}{|w|} - |\Psi(w)|^2 \, \d x 
            \\&= \int_{\{ |w| > 1 \}} |\Psi(w)|^2 |w| - |\Psi(w)|^2 \, \d x 
			\geq 0. 
		\end{align*} 
		Proposition~\ref{Lp contractivity: Prop: Ouhabaz j-elliptic} yields the claim. 
	\end{proof}
	
	One of Nittka's contributions in \cite{Nittka} is an equivalent formulation of Proposition~\ref{Lp contractivity: Prop: Ouhabaz j-elliptic} (ii) that is more practical in applications. He observed the following fact. 

    \begin{lemma}[{\cite[Prop.~2.4 \& Lem.~3.1]{Nittka}}] \label{Lp contractivity: Lem: Nittka P_q rep}
        Let $q \in [2, \infty)$ and $f \in \IL^2$. There are unique $u \in B_q$ and $t \geq 0$ such that $f = u + t |u|^{q-2} u$. Moreover, $u = P_q f$. 
    \end{lemma}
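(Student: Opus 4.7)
The starting point is that $B_q$ is a nonempty, closed, and convex subset of the Hilbert space $\IL^2$, so the orthogonal projection $P_q f$ exists uniquely and is characterized by the variational inequality
\[
\re \langle f - P_q f, v - P_q f \rangle_{\IL^2} \leq 0 \qquad (v \in B_q).
\]
The plan is to set $u \coloneqq P_q f$ and to extract the multiplier $t$ by perturbing $u$ along the unit $\IL^q$-sphere in Lagrange-multiplier fashion, and then to pin down $(u,t)$ by a pointwise rearrangement.

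For existence, I distinguish two cases. If $\|f\|_q \leq 1$, then $u = f$ and $t = 0$ work. Otherwise $u \neq f$, and if $\|u\|_q < 1$ were to hold, then $u + \varepsilon \varphi \in B_q$ for any $\varphi \in \IL^q \cap \IL^2$ and $|\varepsilon|$ small enough; applying the variational inequality to $\pm\varepsilon$ would give $\re\langle f-u, \varphi\rangle = 0$, and density of $\IL^q \cap \IL^2$ in $\IL^2$ would force $f = u$, a contradiction. Hence $\|u\|_q = 1$. For $\varphi \in \IL^q \cap \IL^2$ and $|\varepsilon|$ small, the curve $v_\varepsilon \coloneqq (u + \varepsilon\varphi)/\|u+\varepsilon\varphi\|_q$ lies in $B_q$. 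Since $q \geq 2$, the pointwise identity $\partial_\varepsilon |u+\varepsilon\varphi|^q|_{\varepsilon=0} = q|u|^{q-2}\re(u\overline\varphi)$ is dominated, for $|\varepsilon|\leq 1$, by $q(|u|+|\varphi|)^{q-1}|\varphi| \in \L^1$ via H\"older, so dominated convergence yields $\partial_\varepsilon \|u+\varepsilon\varphi\|_q|_{\varepsilon=0} = \re\langle |u|^{q-2}u, \varphi\rangle$, and thus $\partial_\varepsilon v_\varepsilon|_{\varepsilon=0} = \varphi - \re\langle |u|^{q-2}u, \varphi\rangle \cdot u$. Inserting this into the variational inequality and using that both signs of $\varepsilon$ are admissible gives the real identity
\[
\re\langle f - u, \varphi\rangle = t\cdot\re\langle |u|^{q-2}u, \varphi\rangle, \qquad t \coloneqq \re\langle f - u, u\rangle.
\]
Substituting $\i\varphi$ for $\varphi$ converts real parts into imaginary parts and upgrades this to the complex identity $\langle f-u, \varphi\rangle = t\langle |u|^{q-2}u, \varphi\rangle$ on $\IL^q \cap \IL^2$; testing against indicators of finite-measure sets delivers the pointwise equation $f - u = t|u|^{q-2}u$. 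Finally, choosing $v = 0 \in B_q$ in the variational inequality produces $t \geq 0$.

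For uniqueness, the relation $f = u(1 + t|u|^{q-2})$ forces $|f| = |u|(1+t|u|^{q-2})$ and $\sgn(f) = \sgn(u)$ on $\{u\neq 0\}$; as $s \mapsto s + ts^{q-1}$ is a strictly increasing bijection of $[0, \infty)$, $|u|$ (and hence $u$) is determined pointwise by $f$ and $t$. The complementary slackness inherent in the projection characterization, namely $t = 0$ precisely when $\|f\|_q \leq 1$ and $\|u\|_q = 1$ otherwise, then pins $t$, and the identification $u = P_q f$ is built into the construction. The main technical hurdle is the differentiation of the $\IL^q$-norm along $v_\varepsilon$, for which the assumption $q \geq 2$ is indispensable, together with the density step that upgrades the weak identity on test functions to an almost-everywhere equality.
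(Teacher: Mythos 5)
The paper does not carry a proof of this result; it is cited verbatim from Nittka. Your existence construction is essentially Nittka's route through normal cones: you characterize $P_qf$ by the variational inequality for the orthogonal projection onto the closed convex set $B_q$, and extract the multiplier $t = \re\langle f-P_qf, P_qf\rangle$ by differentiating along $\varepsilon \mapsto (u+\varepsilon\varphi)/\|u+\varepsilon\varphi\|_q$. The Gateaux differentiation of $\|\cdot\|_q$ via dominated convergence, the real-to-complex upgrade via $\varphi \mapsto \i\varphi$, and the passage from the tested identity to an a.e.\ equality are all in order, and the case split for $\|u\|_q<1$ versus $\|u\|_q=1$ is handled correctly. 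This part stands.

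The uniqueness step, however, has a genuine gap. You assert that complementary slackness --- $t=0$ when $\|f\|_q \leq 1$ and $\|u\|_q=1$ otherwise --- is ``inherent in the projection characterization.'' It is a property of the specific pair $\bigl(P_qf,\re\langle f-P_qf,P_qf\rangle\bigr)$ that your construction produces, but it is \emph{not} a consequence of the three conditions $u\in B_q$, $t\geq 0$, $f=u+t|u|^{q-2}u$, which are all the lemma imposes on a candidate pair. Without it uniqueness fails: on a set $E$ of measure one take $q=2$ and $f=\tfrac{1}{2}\1_E$; then $\bigl(u,t\bigr)=\bigl(\tfrac{1}{2(1+t)}\1_E,\,t\bigr)$ satisfies all stated constraints for every $t\geq 0$, and for $t>0$ one even has $u\neq P_2f=f$, so the identification $u=P_qf$ also breaks down for the non-canonical pairs. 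An analogous one-parameter family of pairs exists for every $q\in[2,\infty)$ and every $f\neq 0$, because --- as your own pointwise monotonicity observation shows --- the map $t\mapsto |u(t)|$ obtained by inverting $s\mapsto s+ts^{q-1}$ continuously decreases $\|u(t)\|_q$ from $\|f\|_q$ to $0$. To close the gap one must add the complementary slackness condition $t\,(1-\|u\|_q)=0$ to the list of constraints --- this is what Nittka's normal-cone reading of the projection actually encodes, and what the quotation in the paper suppresses --- after which your monotonicity reasoning does finish the job: two admissible pairs with $\|u_1\|_q=\|u_2\|_q=1$ and $t_1<t_2$ would force $|u_1|>|u_2|$ a.e.\ on $\{f\neq 0\}$ and hence $\|u_1\|_q>\|u_2\|_q$, a contradiction. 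With this amendment both the lemma and your argument are correct, and the two applications in the proof of Proposition~\ref{Lp contractivity: Prop: Nittka} go through unchanged because the pairs used there do satisfy complementary slackness.
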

    
    Armed with this result, we obtain the following version of Nittka's invariance criterion \cite[Thm.~4.1]{Nittka} for our specific setting of $V$ and $j$. 
	
	\begin{proposition}   \label{Lp contractivity: Prop: Nittka}
		Let $q \in [2, \infty)$. The following assertions are equivalent.
		
		\begin{enumerate}
			\item The semigroup $(\e^{- t \LL^A})_{t \geq 0}$ is $\IL^q$-contractive.
			
			\item We have $\re a (u|_O, (|u|^{q-2} u)|_O) \geq 0$ for all $u \in j(V)$ with $|u|^{q-2} u \in j(V)$.
		\end{enumerate}
	\end{proposition}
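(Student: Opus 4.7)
The plan is to reduce the proposition to Ouhabaz's criterion (Proposition~\ref{Lp contractivity: Prop: Ouhabaz j-elliptic}) and use Nittka's representation (Lemma~\ref{Lp contractivity: Lem: Nittka P_q rep}) to rewrite the form inequality in the practical shape appearing in (ii). A preliminary simplification is that by Lemma~\ref{Lp contractivity: Lem: L_infinity-contractive} the subspace $j(V)$ is automatically invariant under $P_q$ for every $q \in [1,\infty]$. Hence Proposition~\ref{Lp contractivity: Prop: Ouhabaz j-elliptic} tells us that (i) is equivalent to
\begin{equation*}
    \re a\bigl(j^{-1}(P_q w), j^{-1}(w - P_q w)\bigr) \geq 0 \qquad (w \in j(V)). \tag{$\star$}
\end{equation*}
So the whole task amounts to showing that ($\star$) and (ii) are equivalent.

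For (ii)~$\Rightarrow$~($\star$), I would fix $w \in j(V)$, set $v \coloneqq P_q w \in j(V)$, and apply Lemma~\ref{Lp contractivity: Lem: Nittka P_q rep} to produce $t \geq 0$ with $w = v + t|v|^{q-2}v$. Then $w - v = t|v|^{q-2}v$, and when $t > 0$ this realizes $|v|^{q-2}v = t^{-1}(w-v)$ as an element of $j(V)$. Therefore (ii) applies to $v$ and gives $\re a(v|_O, (|v|^{q-2}v)|_O) \geq 0$, which together with sesquilinearity and the fact that $j^{-1}$ is simply restriction to $O$ yields ($\star$); the case $t = 0$ is trivial.

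For ($\star$)~$\Rightarrow$~(ii), I would take $v \in j(V)$ with $|v|^{q-2}v \in j(V)$ and assume $v \not\equiv 0$. Since $|v|^{q-2}v \in \IL^2$ means $v \in \IL^{2(q-1)}$, interpolating with $v \in \IL^2$ places $v$ in $\IL^q$, so after normalization $\tilde v \coloneqq v/\|v\|_q$ lies in $B_q$, and both $\tilde v$ and $|\tilde v|^{q-2}\tilde v$ lie in $j(V) \cap \IL^2$. Setting $w \coloneqq \tilde v + |\tilde v|^{q-2}\tilde v \in j(V)$, Lemma~\ref{Lp contractivity: Lem: Nittka P_q rep} (applied with $t=1$) identifies $P_q w = \tilde v$. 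Plugging $w$ into ($\star$) gives $\re a(\tilde v|_O, (|\tilde v|^{q-2}\tilde v)|_O) \geq 0$, and sesquilinear scaling by $\|v\|_q^q$ concludes (ii).

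The main delicate point is ensuring that all the nonlinear expressions produced along the way, in particular $P_q w$ and $|v|^{q-2}v$, actually stay inside $j(V)$; this is where the specific form of $j$ enters the argument. In ($\star$)~$\Rightarrow$~(ii) this is ensured by the explicit formula $w = \tilde v + |\tilde v|^{q-2}\tilde v$ combined with the hypothesis $|v|^{q-2}v \in j(V)$, whereas in (ii)~$\Rightarrow$~($\star$) it is the $P_q$-invariance of $j(V)$ (Lemma~\ref{Lp contractivity: Lem: L_infinity-contractive}, whose proof in turn relies on the commutation Lemma~\ref{Traces: Lem: j commutes!}) that lets us identify $v = P_q w$ as an element of $j(V)$ and then deduce membership of $|v|^{q-2}v$ from Nittka's representation.
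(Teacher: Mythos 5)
Your proposal is correct and follows essentially the same route as the paper: reduce to Ouhabaz's criterion (Proposition~\ref{Lp contractivity: Prop: Ouhabaz j-elliptic}), use Lemma~\ref{Lp contractivity: Lem: L_infinity-contractive} to dispose of the $P_q$-invariance of $j(V)$, and then invoke Nittka's representation (Lemma~\ref{Lp contractivity: Lem: Nittka P_q rep}) in both directions. The only cosmetic difference is in ($\star$)~$\Rightarrow$~(ii): the paper sets $f \coloneqq \|u\|_q^{-1} u + |u|^{q-2}u$ and reads off $P_q f = \|u\|_q^{-1} u$ directly, whereas you first normalize to $\tilde v = v/\|v\|_q$ and take $w = \tilde v + |\tilde v|^{q-2}\tilde v$ so that $t=1$ in Nittka's representation; both choices produce the same positive scalar multiple of $\re a(v|_O, (|v|^{q-2}v)|_O)$, so the arguments are interchangeable.
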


	\begin{proof}
		First, we assume that (ii) is valid. To show part (ii) of Proposition~\ref{Lp contractivity: Prop: Ouhabaz j-elliptic}, we fix $f \in j(V)$. Lemma~\ref{Lp contractivity: Lem: L_infinity-contractive} yields $u \coloneqq P_q f \in j(V)$. By Lemma~\ref{Lp contractivity: Lem: Nittka P_q rep} we find $t \geq 0$ such that $f = u + t |u|^{q-2} u$. Hence, $f - u = t |u|^{q-2} u \in j(V)$ and
		\begin{equation*}
			\re a(j^{-1}(P_q f), j^{-1}(f - P_q f)) = t \re a(u|_O, (|u|^{q-2} u)|_O) \geq 0.
		\end{equation*}
		Now, we assume that Proposition~\ref{Lp contractivity: Prop: Ouhabaz j-elliptic} (ii) holds true. Let $u \in j(V)$ with $|u|^{q-2} u \in j(V)$. We note that $u \in \IL^q$. If $u =0$ there is nothing to prove. So we assume that $u \neq 0$ and put $\alpha \coloneqq \| u \|_q^{-1} >0$. Then $f \coloneqq \alpha u + |u|^{q-2} u \in j(V)$ and Lemma~\ref{Lp contractivity: Lem: Nittka P_q rep} implies that $P_q f = \alpha u$. Consequently,
		\begin{equation*}
			\re a (u|_O, (|u|^{q-2} u)|_O) = \alpha^{-1} \re a (j^{-1}(P_q f), j^{-1}(f - P_q f)) \geq 0. \qedhere
		\end{equation*}
	\end{proof}
	
	\begin{corollary}  \label{Lp contractivity: Cor: A p-elliptic gives Lp contractive}
		Let $p \in (1, \infty)$ and $A$ be $p$-elliptic. Then $(\e^{-t \LL^A})_{t \geq 0}$ is $\IL^p$-contractive.
	\end{corollary}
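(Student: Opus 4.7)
The case $p=2$ is already contained in the construction of $\LL^A$ in Section~\ref{Subsec: L2 realization of L} (the semigroup is contractive on $\IL^2$), so we may assume $p \neq 2$. The case $p \in (1,2)$ reduces to the case $p' \in (2,\infty)$ by a standard duality argument on $\IL^2$: by Lemma~\ref{p-ellipticity: Lem: Basics}~\ref{p-ellipticity: Lem: Basics, duality in A},\ref{p-ellipticity: Lem: Basics, duality in p}, the $p$-ellipticity of $A$ forces the $p'$-ellipticity of $A^*$; the form adjoint of $a$ is the form associated with $A^*$, so $(\LL^A)^* = \LL^{A^*}$; and $\IL^p$-contractivity of a semigroup on $\IL^2$ is equivalent to $\IL^{p'}$-contractivity of its $\IL^2$-adjoint. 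Hence the core is to settle $p \in (2,\infty)$ via Proposition~\ref{Lp contractivity: Prop: Nittka}.

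\textbf{The case $p > 2$.} Fix $u \in j(V)$ with $|u|^{p-2}u \in j(V)$ and set $w \coloneqq u|_O$. Since $j$ is injective and $j^{-1}$ acts as pointwise restriction to $O$, both $w$ and $|w|^{p-2}w$ lie in $V$, so the form value $a(w, |w|^{p-2}w)$ makes sense. Because $p > 2$, the map $\phi(z) \coloneqq |z|^{p-2}z$ belongs to $\rC^1(\C)$ with $\phi(0)=0$, and the complex chain rule in $\W^{1,2}_{\loc}(O)$ (using that $\nabla w = 0$ a.e.\ on $\{w=0\}$ by Stampacchia's theorem) yields the a.e.\ pointwise identity
\begin{equation*}
\nabla(|w|^{p-2}w) = \tfrac{p}{2}|w|^{p-2}\nabla w + \tfrac{p-2}{2}|w|^{p-4}w^2\,\overline{\nabla w}
\end{equation*}
with the right-hand side interpreted as $0$ on $\{w=0\}$. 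With the substitution $\xi \coloneqq \sgn(\bar w)\nabla w$, which has $|\xi|=|\nabla w|$, a direct algebraic manipulation (the Cialdea--Maz'ya identity) rewrites the form integrand as
\begin{equation*}
\re\bigl(A\nabla w \cdot \overline{\nabla(|w|^{p-2}w)}\bigr) = \tfrac{p}{2}|w|^{p-2}\re\bigl(A\xi\cdot\overline{\xi + (1-\tfrac{2}{p})\bar{\xi}}\bigr)
\end{equation*}
on $\{w\neq 0\}$, with both sides vanishing on $\{w=0\}$. The $p$-ellipticity condition~\eqref{eq: Intro: p-ellipticity}, together with homogeneity of degree~$2$ in $\xi$ of both sides of its defining inequality, gives the pointwise bound $\re A\xi\cdot\overline{\xi+(1-\tfrac{2}{p})\bar{\xi}} \geq \Delta_p(A)|\nabla w|^2$. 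Thus the integrand is bounded below by $\tfrac{p}{2}\Delta_p(A)|w|^{p-2}|\nabla w|^2 \geq 0$, and integration over $O$ yields $\re a(w, |w|^{p-2}w) \geq 0$. Proposition~\ref{Lp contractivity: Prop: Nittka} then closes the $p > 2$ case.

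\textbf{Main obstacle and role of \boldmath$j$.} The delicate step is to justify the pointwise identity for $\nabla(|w|^{p-2}w)$ when $w$ is only a Sobolev function; the hypothesis $|u|^{p-2}u \in j(V)$ plus the $\rC^1$-regularity of $\phi$ for $p > 2$ make this routine via the chain rule, and the behaviour on the level set $\{w=0\}$ is handled by Stampacchia. This regularity fails for $p \in (1,2)$, which is precisely why that range is routed through duality rather than attacked directly. The dynamical boundary data enter the argument only through the observation that $j^{-1}(|u|^{p-2}u) = |w|^{p-2}w$ on $O$, which is immediate from the definition of $j$; no estimate involving $\Sigma$ is needed, in line with the general principle that the contraction property can be read off from the bilinear form $a$.
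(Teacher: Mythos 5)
Your proof is correct and takes essentially the same approach as the paper: reduce to $p \geq 2$ via the duality argument encoded in Lemma~\ref{p-ellipticity: Lem: Basics}, then invoke Proposition~\ref{Lp contractivity: Prop: Nittka} and show $\re a(w, |w|^{p-2}w) \geq 0$ from $p$-ellipticity. The only difference is that you spell out the Cialdea--Maz'ya algebraic identity where the paper simply cites \cite[Cor.~12]{Egert_p-ellipticity}; do note that the chain-rule justification for $\nabla(|w|^{p-2}w)$ is a bit less routine than you suggest (it requires a truncation/approximation argument since $\phi$ need not be globally Lipschitz), which is precisely what that cited reference handles.
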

	
	\begin{proof}
		By duality and items~\ref{p-ellipticity: Lem: Basics, duality in A} and~\ref{p-ellipticity: Lem: Basics, duality in p} of Lemma~\ref{p-ellipticity: Lem: Basics} we can assume that $p \geq 2$. In view of Proposition~\ref{Lp contractivity: Prop: Nittka} we have to show for all $u \in j(V)$ with $|u|^{p-2} u \in j(V)$ that
		\begin{equation*}
			\re a(w, |w|^{p-2} w) \geq 0, 
		\end{equation*}
        where $w \coloneqq u|_O$. Since $w, |w|^{p-2} w \in V$, this is the crucial estimate that follows from $p$-ellipticity of $A$, see for instance \cite[Cor.~12]{Egert_p-ellipticity}.
	\end{proof}
	
	
	\subsection{\texorpdfstring{Generators in \boldmath$\IL^p$}{}}
	
	Let us prove the first assertion of Theorem~\ref{Intro: Thm: Bounded FC in Lp}, namely that $(\e^{-t \LL^A})_{t \geq 0}$ extrapolates to an analytic $\rC_0$-semigroup of contractions in $\IL^p$. 
	
	\begin{proof}[\rm\bf{Proof of Theorem~\ref{Intro: Thm: Bounded FC in Lp}, part 1.}]
		Lemma~\ref{p-ellipticity: Lem: Basics}~\ref{p-ellipticity: Lem: Basics, Delta cont in A} yields $\theta_p > 0$ and part~\ref{p-ellipticity: Lem: Basics, Delta Lip} of the same lemma implies $\nicefrac{\pi}{2} - \theta_p \geq \nicefrac{\pi}{2} - \theta_2 \geq \omega(A)$. Let $\theta \in (- \theta_p, \theta_p)$. Since $\e^{-t \e^{\i \theta} \LL^A} = \e^{- t \LL^{\e^{\i \theta} \!\! A}}$ for all $t \geq 0$, Corollary~\ref{Lp contractivity: Cor: A p-elliptic gives Lp contractive} entails 
		\begin{equation*}
			\| \e^{- z \LL^A} f \|_p \leq \| f \|_p \qquad ( z \in \S_{\theta_p} \cup \{ 0 \}, f \in \IL^p \cap \IL^2).
		\end{equation*}
		By density, each $\e^{-z \LL^A}$ extends to a contractive linear operator $T_p(z)$ on $\IL^p$. In view of Lemma~\ref{p-ellipticity: Lem: Basics}~\ref{p-ellipticity: Lem: Basics, Delta Lip}, the interval of exponents $p$, in which this conclusion holds, is open. Hence, we conclude by \cite[Prop.~3.12]{Ouhabaz_book} that $T_p \coloneqq (T_p(z))_{z \in \S_{\theta_p} \cup \{ 0 \}}$ defines an analytic $\rC_0$-semigroup of contractions in $\IL^p$.
	\end{proof}
	
	The above result gives rise to a proper notion of an $\IL^p$-realization of $\LL^A$.
	
	\begin{definition}
		Let $- \LL_p^A$ be the generator of the semigroup $T_p$. We call $\LL_p^A$ the 
		\textbf{$\boldsymbol{\IL^p}$-realization} of $\LL^A$.
	\end{definition}
	
	Letting $\LL^A|_{\IL^p}$ be the part of $\LL^A$ in $\IL^p$, that is 
	\begin{equation*}
		\LL^A|_{\IL^p} \coloneqq \LL^A \cap (\IL^p \times \IL^p),
	\end{equation*}
	we have the following, more concrete description for the $\IL^p$-realization. 
	
	\begin{lemma}[{\cite[Lem.~3.1]{Do_p-ellipticity}}]
            \label{Lp contractivity: Lem: part in Lp}
		Let $p \in (1, \infty)$ and $A$ be $p$-elliptic. The operator $\LL_p^A$ is the closure of $\LL^A|_{\IL^p}$ in $\IL^p$. In particular, if $p\in (2,\infty)$ and $\mu$ is bounded, then $\LL_p^A = \LL^A|_{\IL^p}$.
	\end{lemma}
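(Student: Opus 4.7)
The plan is to prove $\LL^A|_{\IL^p} \subseteq \LL_p^A$ first, then that $\D(\LL^A|_{\IL^p})$ is a core for $\LL_p^A$, and finally derive the \emph{in particular} statement from a straightforward embedding argument. The central auxiliary ingredient is the consistency of resolvents: by the very construction of $T_p$ as the extension of $T_2 = (\e^{-t\LL^A})_{t\geq 0}$ from the dense subspace $\IL^p \cap \IL^2$, the semigroups agree on $\IL^p \cap \IL^2$, and taking Laplace transforms yields $(\lambda + \LL_p^A)^{-1} v = (\lambda + \LL^A)^{-1} v$ for every $v \in \IL^p \cap \IL^2$ and all $\lambda$ in a common right half-plane (contained in both resolvent sets by the analytic contraction property).

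For the inclusion, I would take $u \in \D(\LL^A)$ with $u, \LL^A u \in \IL^p$ and transport the Bochner identity $\e^{-t\LL^A} u - u = -\int_0^t \e^{-s\LL^A} \LL^A u \, \d s$ from $\IL^2$ to $\IL^p$: consistency identifies $\e^{-s\LL^A} \LL^A u$ with $T_p(s) \LL^A u$, and contractivity of $T_p$ secures absolute convergence of the integral in $\IL^p$. Dividing by $t$ and letting $t \downarrow 0$ places $u$ in $\D(\LL_p^A)$ with $\LL_p^A u = \LL^A u$. For the core property, I would use resolvent regularization: given $u \in \D(\LL_p^A)$ and $\lambda > 0$ sufficiently large, set $w \coloneqq (\lambda + \LL_p^A) u \in \IL^p$ and approximate $w$ by $w_n \in \IL^p \cap \IL^2$, which is dense in $\IL^p$ since $\mu$ is $\sigma$-finite. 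Define $u_n \coloneqq (\lambda + \LL_p^A)^{-1} w_n$; by resolvent consistency, $u_n = (\lambda + \LL^A)^{-1} w_n \in \D(\LL^A)$, and $\LL^A u_n = \lambda u_n - w_n \in \IL^p$, so $u_n \in \D(\LL^A|_{\IL^p})$. Boundedness of $(\lambda + \LL_p^A)^{-1}$ on $\IL^p$ then gives $u_n \to u$ and $\LL^A u_n = \LL_p^A u_n \to \LL_p^A u$ in $\IL^p$, settling the claim.

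For the \emph{in particular} statement, assume $p \in (2, \infty)$ and $\mu$ is bounded, so $\IL^p \hookrightarrow \IL^2$ continuously. Any $u \in \D(\LL_p^A)$ then satisfies $u, \LL_p^A u \in \IL^p \subseteq \IL^2$, and the difference quotient $(T_p(t) u - u)/t \to -\LL_p^A u$ in $\IL^p$ carries over to $\IL^2$ via the embedding. Since $T_p(t) u = T_2(t) u = \e^{-t\LL^A} u$ by consistency, this identifies $u \in \D(\LL^A)$ with $\LL^A u = \LL_p^A u$, so $u \in \D(\LL^A|_{\IL^p})$. The reverse inclusion having already been established, equality follows.

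The only point that requires genuine care is the consistency of resolvents on $\IL^p \cap \IL^2$; once this is secured, everything else reduces to standard semigroup-theoretic arguments, with the $j$-elliptic form construction entering only through the initial definition of $\LL^A|_{\IL^p}$ as $\LL^A \cap (\IL^p \times \IL^p)$. There is no deep obstacle here, just careful bookkeeping between the abstract form realization in $\IL^2$ and the semigroup realization in $\IL^p$.
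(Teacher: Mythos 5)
Your proof is correct and essentially the standard argument for this type of consistency statement; note, however, that the paper does not actually supply its own proof here but simply cites \cite[Lem.~3.1]{Do_p-ellipticity}, so what you have produced is a self-contained verification rather than a comparison target. The three ingredients you use---resolvent consistency on $\IL^p\cap\IL^2$ obtained by Laplace transforming the agreement of the semigroups, transport of the Bochner identity $\e^{-t\LL^A}u-u=-\int_0^t\e^{-s\LL^A}\LL^Au\,\d s$ from $\IL^2$ to $\IL^p$ to get $\LL^A|_{\IL^p}\subseteq\LL_p^A$, and resolvent regularisation to exhibit $\D(\LL^A|_{\IL^p})$ as a core---are exactly what one expects and the closedness of the generator $\LL_p^A$ then yields the closure statement. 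Two small points: you wrote $\LL^A u_n=\lambda u_n-w_n$ where it should read $\LL^A u_n=w_n-\lambda u_n$ (the sign slip does not affect the membership $\LL^A u_n\in\IL^p$, which is all you need), and your appeal to $\sigma$-finiteness of $\mu$ is unnecessary: simple functions with support of finite measure are dense in $\IL^p$ for any measure and lie in every $\IL^q$, so density of $\IL^p\cap\IL^2$ in $\IL^p$ holds without restrictions on $\mu$. The ``in particular'' part via the embedding $\IL^p\hookrightarrow\IL^2$ for bounded $\mu$ and $p>2$ is also handled correctly.
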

	
	Clearly, $\LL_p^A$ is sectorial of angle at most $\nicefrac{\pi}{2} - \theta_p$ in $\IL^p$. For convenience of the reader, we collect elementary properties of $\LL_p^A$.
	
	\begin{lemma} \label{Lp contractivity: Lem: Lp realization}
		Let $p \in (1, \infty)$ and $A$ be $p$-elliptic. 
		
		\begin{enumerate}
			\item \label{Lp contractivity: Lem: Lp realization, Resolvents consistent}
			We have $(z- \LL_p^A)^{-1} f = (z - \LL^A)^{-1} f$ for all $z \in \C \setminus \overline{\S_{\nicefrac{\pi}{2} - \theta_p}}$ and $f \in \IL^p \cap \IL^2$.
			
			\item \label{Lp contractivity: Lem: Lp realization, L_p consistent}
			We have $\LL_p^A u = \LL^A u$ for all $u \in  \D(\LL_p^A) \cap \D(\LL^A)$.
			
			\item \label{Lp contractivity: Lem: Lp realization, Injective}
			If $\LL^A$ is injective, then so is $\LL_p^A$.
			
			\item \label{Lp contractivity: Lem: Lp realization, L invertible}
			If $a$ is coercive, then $\LL_p^A$ is invertible and (i) holds for $z =0$. 
		\end{enumerate}
		
	\end{lemma}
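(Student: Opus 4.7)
For (i), the idea is to represent both resolvents as Laplace transforms of their respective semigroups. For $\re \lambda > 0$, contractivity yields the absolutely convergent representations
\begin{equation*}
(\lambda + \LL_p^A)^{-1} f = \int_0^\infty \e^{-\lambda t} T_p(t) f \, \d t, \qquad (\lambda + \LL^A)^{-1} f = \int_0^\infty \e^{-\lambda t} \e^{-t \LL^A} f \, \d t,
\end{equation*}
as $\IL^p$- and $\IL^2$-valued Bochner integrals, respectively. For $f \in \IL^p \cap \IL^2$ the two integrands coincide by the very construction of $T_p$, so the integrals define the same element of $\IL^p \cap \IL^2$. Lemma~\ref{p-ellipticity: Lem: Basics}~\ref{p-ellipticity: Lem: Basics, Delta Lip} places the sector angle of $\LL^A$ below $\nicefrac{\pi}{2} - \theta_p$, so both resolvents are jointly holomorphic on $\C \setminus \overline{\S_{\nicefrac{\pi}{2} - \theta_p}}$ and the equality extends there by analytic continuation. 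Part (ii) then follows immediately: for $u \in \D(\LL_p^A) \cap \D(\LL^A) \subseteq \IL^p \cap \IL^2$, the difference quotients $t^{-1}(T_p(t) u - u)$ and $t^{-1}(\e^{-t \LL^A} u - u)$ are the same $\mu$-measurable function, and by extracting a subsequence $t_n \to 0^+$ that converges $\mu$-a.e.\ in both $\IL^p$ and $\IL^2$, the two norm-limits $-\LL_p^A u$ and $-\LL^A u$ must agree $\mu$-a.e.

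For (iii), with $\LL^A$ injective, I invoke the mean ergodic theorem on the contractive $\rC_0$-semigroups living on the reflexive spaces $\IL^p$ and $\IL^2$: the Ces\`aro means $T^{-1} \int_0^T T_p(t) f \, \d t$ converge in $\IL^p$ to a bounded projection $P_p f$ onto $\Ker(\LL_p^A)$, and analogously in $\IL^2$ to $P_2 f$ onto $\Ker(\LL^A) = \{0\}$, so $P_2 \equiv 0$. For $f \in \IL^p \cap \IL^2$, both Ces\`aro means are the \emph{same} function on $O \cup \Sigma$; extracting a common $\mu$-a.e.\ convergent subsequence then gives $P_p f = 0$ $\mu$-almost everywhere. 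Boundedness of $P_p$ on $\IL^p$ together with density of $\IL^p \cap \IL^2$ in $\IL^p$ forces $P_p \equiv 0$, i.e., $\Ker(\LL_p^A) = \{0\}$.

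For (iv), coercivity combined with Remark~\ref{Formula for L via L-M: Rem: Coercive a} gives $0 \in \rho(\LL^A)$, and sectoriality with $\omega(A) < \nicefrac{\pi}{2}$ then yields a strict spectral gap, whence $\|\e^{-t \LL^A}\|_{\IL^2 \to \IL^2} \leq \e^{-\delta t}$ for some $\delta > 0$. By continuity of $p \mapsto \Delta_p(A)$ (Lemma~\ref{p-ellipticity: Lem: Basics}) I pick $p_0$ such that $A$ is $p_0$-elliptic and $p$ lies strictly between $2$ and $p_0$; Corollary~\ref{Lp contractivity: Cor: A p-elliptic gives Lp contractive} provides an $\IL^{p_0}$-contractive companion semigroup, and Riesz--Thorin interpolation of the consistent semigroups delivers $\|T_p(t)\|_{\IL^p \to \IL^p} \leq \e^{-\delta' t}$ with $\delta' > 0$. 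Hence $\LL_p^A$ is boundedly invertible on $\IL^p$, and the extension of (i) to $z=0$ follows from the analytic continuation argument in (i) together with the uniform $\IL^p$-boundedness of $(s + \LL_p^A)^{-1}$ as $s \searrow 0$. I expect the main subtlety to lie in (iv)---justifying the Riesz--Thorin step and the transfer of the operator-norm spectral gap from $\IL^2$ to $\IL^p$ across the hybrid measure setting.
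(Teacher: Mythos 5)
Your proof is correct. A few comparative remarks.

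For parts (i) and (ii), the paper simply cites \cite[Lem.~3.1]{Do_p-ellipticity}, whereas you reconstruct the argument from scratch: the Laplace-transform representation of the resolvents for $\re z<0$ followed by analytic continuation for (i), and consistency of difference quotients read off along a $\mu$-a.e.\ convergent subsequence for (ii). Both steps are sound and are essentially what one finds in Do's proof.

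For part (iii) your route differs from the paper's. You invoke the mean ergodic theorem on the reflexive spaces $\IL^2$ and $\IL^p$: the Ces\`aro means converge to the projections onto $\Ker(\LL^A)=\{0\}$ and $\Ker(\LL_p^A)$ respectively, and on $\IL^p\cap\IL^2$ these means are literally the same functions, so a common $\mu$-a.e.\ convergent subsequence forces $\Ker(\LL_p^A)=\{0\}$. The paper instead shows $T_2(t)f\to 0$ strongly in $\IL^2$ as $t\to\infty$, transfers this to weak $\IL^p$-convergence of $T_p(t)f$ for all $f\in\IL^p$ via contractivity and density, and concludes because an element of the kernel is a fixed point of $T_p$. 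The ergodic-mean argument has the mild advantage of only invoking boundedness and reflexivity, whereas the paper's argument requires knowing that $T_2(t)f\to 0$ strongly, which rests on the Hilbert space structure and analyticity. Both are valid; yours is slightly more self-contained.

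For part (iv) your approach is essentially the one in the paper: coercivity makes $T_2$ exponentially stable, Lemma~\ref{p-ellipticity: Lem: Basics}~\ref{p-ellipticity: Lem: Basics, Delta Lip} furnishes a $p_0$ with contractive $T_{p_0}$ and $p$ between $2$ and $p_0$, Riesz--Thorin (i.e.\ complex interpolation, which applies verbatim since $\IL^p$ is an ordinary $\L^p$-space over the $\sigma$-finite measure~$\mu$) yields exponential stability of $T_p$, hence invertibility, and consistency at $z=0$ follows by Laplace transform. One small bookkeeping point: Remark~\ref{Formula for L via L-M: Rem: Coercive a} gives invertibility of $\cL$, and it is Lemma~\ref{Formula for L via L-M: Lem: Repres via j} that transports this to $\LL^A$; more directly, coercivity of $a$ together with boundedness of $j\colon V\to\IL^2$ gives $\re\langle\LL^A u, u\rangle\geq c\Vert u\Vert_2^2$ for $u\in\D(\LL^A)$, yielding the $\IL^2$ decay estimate straightaway without any spectral-gap detour.
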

	
	\begin{proof} 
		Parts \ref{Lp contractivity: Lem: Lp realization, Resolvents consistent} and \ref{Lp contractivity: Lem: Lp realization, L_p consistent} are already in \cite[Lem.~3.1]{Do_p-ellipticity}. Let us prove \ref{Lp contractivity: Lem: Lp realization, Injective}. For any $f \in \IL^p \cap \IL^2$ we have $\lim_{t \to \infty} T_p(t) f = \lim_{t \to \infty} T_2(t) f = 0$ strongly in $\IL^2$. Since $T_p$ is uniformly bounded on $\IL^p$ and $\IL^p \cap \IL^2$ is dense in $\IL^p$, the same convergence holds weakly in $\IL^p$ for every $f \in \IL^p$. Now, if $f \in \Ker(\LL_p^A)$, then $T_p(t) f = f$ for all $t > 0$ and $f = 0$ follows.    
		
		To prove \ref{Lp contractivity: Lem: Lp realization, L invertible}, we note that the coercivity of $a$ implies that $T_2$ is exponentially stable. Since $A$ is $p$-elliptic, we can combine Lemma~\ref{p-ellipticity: Lem: Basics}~\ref{p-ellipticity: Lem: Basics, Delta Lip} and part 1 of Theorem~\ref{Intro: Thm: Bounded FC in Lp} to find $q \in (1,\infty)$ such that $p$ lies between $2$ and $q$, and $T_q$ is contractive. By complex interpolation, $T_p$ is exponentially stable. Hence, $\LL_p^A$ is invertible and consistency of the inverse on $\IL^p \cap \IL^2$ follows from consistency of the semigroups by taking the Laplace transform at $\lambda =0$.
	\end{proof}
	
	\section{Bilinear embedding} \label{Sec: BE}
	
	In this section, we prove the bilinear embedding, Theorem~\ref{Intro: Thm: BE}, for $p \neq 2$. By symmetry of the assumptions we suppose from now on that $p > 2$ and that $A$ is $p$-elliptic. We fix $t > 0$, $f, g \in \IL^p \cap \IL^{p'}$ and let $Q \colon \C \times \C \to [0, \infty)$ be a continuously (real) differentiable function. We are interested in the monotonicity behaviour of the adapted heat flow
	\begin{equation*}
		\energy(t) \coloneqq \int_{O \cup \Sigma} Q(f_t^A, g_t^B) \, \d \mu.
	\end{equation*}
	The fundamental theorem of calculus yields 
	\begin{equation*}
		- \int_0^{\infty} \energy'(t) \, \d t \leq \energy(0) = \int_{O \cup \Sigma} Q(f,g) \, \d \mu, 
	\end{equation*}
	and as in Section~\ref{Subsec: Bilinear estimate, p=2} we need an upper bound for $Q$ and a uniform lower bound for $- \energy'(t)$. The difficulty is to find the correct $Q$. As in \cite{CD_JEMS, CD_Open_Set}, we use the ingenious Nazarov--Treil Bellmann function and we refer to these references for historical background.
	
	\begin{definition} \label{BE Def: Bellmann function}
		Let $\delta \in (0,1]$ and $\zeta, \eta \in \C$. The \textbf{Nazarov--Treil Bellmann function} is defined as
		\begin{equation} \label{eq: BE: Bellman function}
			Q(\zeta, \eta) \coloneqq |\zeta|^p + |\eta|^{p'} + \delta \begin{cases}
				|\zeta|^2 |\eta|^{2 - p'}, \quad &(|\zeta|^p \leq |\eta|^{p'}),  \\\ \frac{2}{p} |\zeta|^p + (1 - \frac{2}{p}) |\eta|^{p'}, \quad &( |\zeta|^p \geq |\eta|^{p'}).
			\end{cases}
		\end{equation}
	\end{definition}
	Here, $\delta$ is a degree of freedom that will be chosen small at a later point. We note that 
	\begin{equation} \label{eq: BE: Upper bound for Q}
		Q(\zeta, \eta) \leq 2( |\zeta|^p + |\eta|^{p'}) \qquad (\zeta, \eta \in \C),
	\end{equation}
	and for $p = 2$ we recover (up to a multiplicative constant) the energy from Section~\ref{Subsec: Bilinear estimate, p=2}. We write 
	\begin{equation*}
		\partial_z = \frac{1}{2} (\partial_x - \i \partial_y)
	\end{equation*}
	for the (Wirtinger) derivative with respect to the complex variable $z = x + \i y$. Since for any $q \in (1, \infty)$ the power function $z \mapsto |z|^q$ is continuously differentiable with derivative 
	\begin{equation*}
		\partial_z |z|^q = \partial_z (z \cdot \overline{z})^{\frac{q}{2}} = \frac{q}{2} (z \cdot \overline{z})^{\frac{q}{2}-1} \partial_z (z \cdot \overline{z}) = \frac{q}{2} |z|^{q-2} \overline{z},
	\end{equation*}
	the very definition of $Q$ implies that $Q \in \rC^1(\C^2)$. As a matter of fact, we need to re-tell some of the results found by Carbonaro--Dragičević in \cite{CD_JEMS, CD_Open_Set}. To simplify the exposition, we shall follow their outline in \cite{CD_Schrodinger}. 
 
    The first part of Theorem~\ref{Intro: Thm: Bounded FC in Lp} implies that $(\e^{-t \LL^A})_{t \geq 0}$ and $(\e^{-t \LL^B})_{t \geq 0}$ are bounded analytic semigroups in $\IL^p$ and $\IL^{p'}$, respectively. Hence, $\energy \in \rC[0, \infty) \cap \rC^1(0, \infty)$ with derivative
	\begin{align}
		\begin{split} \label{eq: BE, E'(t)}
			- \energy'(t) &= 2 \re \int_{O \cup \Sigma} \LL^A f_t^A \cdot (\partial_{\zeta} Q) (f_t^A, g_t^B)  + \LL^B g_t^B \cdot (\partial_{\eta} Q)(f_t^A, g_t^B) \, \d \mu
			\\&= 2 \re \int_{O \cup \Sigma} \LL^A j(f_t^A|_O) \cdot (\partial_{\zeta} Q) (j(f_t^A|_O), j(g_t^B|_O))  
			\\&\qquad \qquad+ \LL^B j(g_t^B|_O) \cdot (\partial_{\eta} Q)(j(f_t^A|_O), j(g_t^B|_O)) \, \d \mu,
		\end{split}
	\end{align}
    see \cite[App.~C, Prop.~C1]{CD_Open_Set}.
	We would like to use the definition of $\LL^A$ and $\LL^B$ to get rid of the hybrid measure space, but this is not immediately possible, because $(\partial_\zeta Q, \partial_\eta Q)$ lacks in regularity and, among other things, we cannot use Lemma~\ref{Traces: Lem: j commutes!} to commute $(\partial_\zeta Q, \partial_\eta Q)$ with $j$. A similar issue has also occurred in \cite{CD_Open_Set} when transforming the heat flow method from $\R^d$ \cite{CD_JEMS} to arbitrary open sets. It has been resolved by approximating $Q$ with smooth functions $R_{n, \nu}$ and we will see that the approximations also fit into the setting of Lemma~\ref{Traces: Lem: j commutes!}. To this end, we need:
	
	\begin{itemize}
		\item a radial function $\varphi \in \smooth[\R^4]$ such that $0 \leq \varphi \leq \1_{B(0,1)}$ and $\int_{\R^4} \varphi \, \d x =1$,
		
		\item a non-negative and radial function $\psi \in \smooth[\R^4]$ with $\psi =1$ on $B(0,3)$ and $\psi =0$ on $B(0,4)^c$.
	\end{itemize}
	
	For $n \in \N$, $\nu \in (0,1]$ and $\omega \in \R^4$ we set
	\begin{equation*}
		\varphi_{\nu} (\omega) \coloneqq \nu^{-4}  \varphi(\nicefrac{\omega}{\nu}) \quad \& \quad \psi_n(\omega) \coloneqq  \psi(\nicefrac{\omega}{n}).
	\end{equation*}
	We identify $\C^2$ explicitly with $\R^4$ via
	\begin{equation*}
		\mathcal{W}_2 \colon \C^2 \to \R^4, \quad \mathcal{W}_2 (z_1, z_2) \coloneqq (\re(z_1), \im(z_1), \re(z_2), \im(z_2))
	\end{equation*}
	and define the convolution of $Q$ with $\varphi_{\nu}$ at $z \in \C^2$ by
	\begin{equation*}
		(Q * \varphi_{\nu} )(z) \coloneqq \int_{\R^4}   Q(z - \mathcal{W}_2^{-1}(\omega)) \varphi_{\nu}(\omega) \, \d \omega.
	\end{equation*}
	
	\begin{definition} \label{BE: Def: Approximation of Q}
		Let $n \in \N$ and $\nu \in (0, 1]$. We define $R_{n, \nu} \coloneqq \psi_n ( Q * \varphi_{\nu}) + P_{n, \nu}$.
	\end{definition}
	
	Here, $P_{n, \nu}$ is a cleverly chosen correction term that makes the generalized Hessian of $R_{n, \nu}$ $(A, B)$-convex, see Definition~\ref{BE: Def: Generalized Hessian} below. Its precise form is not needed and we refer to \cite[Sec.~5.2]{CD_Open_Set}. Let us collect useful properties of $R_{n, \nu}$. We write $D = (\partial_{\zeta}, \partial_{\eta})$ and denote by $D^2 R_{n, \nu}$ the Hessian of $R_{n, \nu}$ in the variables $\zeta$ and $\eta$.
	
	\begin{lemma}[{\cite[Lem.~14 \& Thm.~16]{CD_Open_Set}}] \label{BE: Lem: Properties of approximations}
		Let $n \in \N$ and $\nu \in (0,1]$. Then $R_{n, \nu} \in \rC^{\infty}(\C^2)$ has the following additional properties:
		
		\begin{enumerate}[label=(Q\arabic*)]
			\item \label{BE: Lem: Properties of approximations, D^2 R bounded} We have $D^2 R_{n, \nu} \in \L^{\infty}(\C^2; \C^{2 \times 2})$.
			
			\item \label{BE: Lem: Properties of approximations, (DR)(0)=0} We have $(D R_{n, \nu})(0) =0$.
			
			\item \label{BE: Lem: Properties of approximations,  Pw limits for derivatives} We have
			\begin{align*}
				\lim_{n \to \infty} ((D R_{n, \nu})(z), (D^2 R_{n, \nu})(z)) 
                    &= ((D (Q * \varphi_{\nu})) (z),( D^2 (Q * \varphi_{\nu}))(z) ) \qquad (z \in \C^2) \\
				\qquad \lim_{\nu \to 0} \lim_{n \to \infty} ((D R_{n, \nu})(z), (D^2 R_{n, \nu})(z)) 
                    &= ((D Q) (z),( D^2 Q)(z) ) \qquad (\text{a.e. } z \in \C^2).
			\end{align*}
			
			\item \label{BE: Lem: Properties of approximations, Pw bounds for derivatives} We have $|(D R_{n, \nu})(z)| + |D(Q * \varphi_{\nu})(z)| \leq C( |z|^{p-1} + |z|^{p'-1} )$ for some $C = C(\delta, p) >0$ and all $z \in \C^2$.
		\end{enumerate}
	\end{lemma}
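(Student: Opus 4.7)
My plan is to establish items~\ref{BE: Lem: Properties of approximations, D^2 R bounded}--\ref{BE: Lem: Properties of approximations, Pw bounds for derivatives} by combining standard mollifier facts with the explicit form of the Nazarov--Treil function $Q$. The design of the correction term $P_{n,\nu}$ is the only genuinely delicate piece and it is carried out in detail in \cite[Sec.~5.2]{CD_Open_Set}; I would import it (together with its quantitative bounds) without change, so the task reduces to reading off the consequences for $R_{n,\nu} = \psi_n (Q * \varphi_{\nu}) + P_{n, \nu}$.

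First, smoothness of $R_{n,\nu}$ is immediate: since $Q \in \rC^1(\C^2)$ and $\varphi_{\nu}, \psi_n \in \smooth[\R^4]$, the convolution $Q * \varphi_{\nu}$ lies in $\rC^{\infty}(\C^2)$ by differentiation under the integral, multiplication by $\psi_n$ preserves smoothness, and $P_{n,\nu}$ is smooth by construction. For~\ref{BE: Lem: Properties of approximations, D^2 R bounded}, I would observe that the Hessian of $\psi_n(Q * \varphi_\nu)$ is supported in $\supp \psi_n \subseteq B(0, 4n)$, where it is continuous and therefore bounded, and the correction $P_{n,\nu}$ contributes an explicitly bounded Hessian. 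For~\ref{BE: Lem: Properties of approximations, (DR)(0)=0}, I would first note from \eqref{eq: BE: Bellman function} that $(DQ)(0)=0$, since every summand is a power of $|\zeta|$ or $|\eta|$ with positive exponent; however, convolution can introduce a nonzero value at the origin, and the very purpose of $P_{n,\nu}$ in \cite{CD_Open_Set} is to subtract off a suitable low-order polynomial so that $(DR_{n,\nu})(0) = 0$ after the correction.

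For~\ref{BE: Lem: Properties of approximations,  Pw limits for derivatives}, the inner limit $n \to \infty$ will use that $\psi_n \to 1$ pointwise and that $D\psi_n, D^2 \psi_n \to 0$ pointwise (by the scaling $\psi_n(\omega) = \psi(\omega/n)$), together with the analogous vanishing of the derivatives of $P_{n,\nu}$. The outer limit $\nu \to 0$ is standard mollification convergence applied to $DQ$ and $D^2 Q$; the latter is defined only almost everywhere due to the seam $\{|\zeta|^p = |\eta|^{p'}\}$ in the definition of $Q$, which is the source of the ``a.e.'' in the statement. Finally, \ref{BE: Lem: Properties of approximations, Pw bounds for derivatives} will follow from the pointwise estimate $|DQ(z)| \les |z|^{p-1} + |z|^{p'-1}$ that one reads directly off \eqref{eq: BE: Bellman function}: this growth survives convolution with the compactly supported $\varphi_{\nu}$, multiplication by $|\psi_n|\leq 1$, and the addition of the correction.

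The main obstacle, were one to redo the argument from scratch, would be the design of $P_{n,\nu}$ itself: it must simultaneously enforce~\ref{BE: Lem: Properties of approximations, (DR)(0)=0}, respect the pointwise limits in~\ref{BE: Lem: Properties of approximations,  Pw limits for derivatives}, keep the Hessian bound in~\ref{BE: Lem: Properties of approximations, D^2 R bounded} quantitative, and --- crucially for the next step of the overall argument --- preserve the $(A,B)$-convexity of the generalized Hessian that will drive the bilinear embedding. Since \cite[Sec.~5.2]{CD_Open_Set} carries out this construction in full generality, my proof would simply invoke their result after checking that the Bellmann function and the mollifiers here agree with the ones used there.
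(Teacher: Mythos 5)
Your core move---simply citing \cite[Lem.~14 \& Thm.~16]{CD_Open_Set}---matches exactly the paper's treatment: the lemma is stated with that bracketed citation and no independent proof is given, so on the main strategy you and the authors agree. Two of your explanatory asides are inaccurate, however. First, you assert that the ``very purpose'' of $P_{n,\nu}$ is to restore $(DR_{n,\nu})(0)=0$; the paper itself states that $P_{n,\nu}$ is chosen so as to restore the $(A,B)$-convexity of the generalized Hessian, which truncation by $\psi_n$ destroys---vanishing of $DR_{n,\nu}$ at the origin is a side effect of the construction, not its purpose. Second, mollification cannot introduce a nonzero first derivative at the origin here: $Q$ is invariant under the separate phase rotations $(\zeta,\eta)\mapsto(\e^{\i\alpha}\zeta,\e^{\i\beta}\eta)$, which act on $\R^4\cong\C^2$ orthogonally, and $\varphi_\nu$ is radial on $\R^4$ and hence shares this invariance, so $Q*\varphi_\nu$ inherits it and $D(Q*\varphi_\nu)(0)=0$ follows automatically. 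Since $\psi_n\equiv 1$ near $0$, the truncated mollified term already satisfies (Q2); all that is required of $P_{n,\nu}$ is that it not spoil this. Finally, your sketch of (Q4) is too quick near the origin: for $|z|\ll\nu$ the estimate $|D(Q*\varphi_\nu)(z)|\lesssim |z|^{p'-1}$ with a $\nu$-uniform constant is not a formal consequence of ``growth surviving convolution'' but relies on $D(Q*\varphi_\nu)(0)=0$ together with a Taylor-type bound that trades off the local unboundedness of $D^2Q$ along $\{\eta=0\}$ against the mollification scale.
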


    Let us continue in \eqref{eq: BE, E'(t)}. Using~\ref{BE: Lem: Properties of approximations,  Pw limits for derivatives},~\ref{BE: Lem: Properties of approximations, Pw bounds for derivatives} and the boundedness of the semigroup on $\IL^p$ and $\IL^{p'}$ (part 1 of Theorem~\ref{Intro: Thm: Bounded FC in Lp}), we can apply dominated convergence twice and get 
	\begin{align*}
		- \frac{1}{2} \energy'(t) &= \lim_{\nu \to 0} \lim_{n \to \infty} \re \bigg(\int_{O \cup \Sigma} \LL^A j(f_t^A|_O) \cdot (\partial_{\zeta} R_{n,\nu}) (j(f_t^A|_O), j(g_t^B|_O))  
			\\&\qquad \qquad \qquad \qquad+ \LL^B j(g_t^B|_O) \cdot (\partial_{\eta} R_{n,\nu})(j(f_t^A|_O), j(g_t^B|_O)) \, \d \mu \bigg).
       \end{align*}
       Owing to~\ref{BE: Lem: Properties of approximations, D^2 R bounded} and~\ref{BE: Lem: Properties of approximations, (DR)(0)=0}, $DR_{n, \nu}$ falls into the scope of Lemma~\ref{Traces: Lem: j commutes!}. Thus, we can commute $j$ as desired and obtain
    \begin{align}
    \label{eq: BE, E'(t) part 2}
    \begin{split}
		- \frac{1}{2} \energy'(t) &= \lim_{\nu \to 0} \lim_{n \to \infty} \int_{O} \re \Big ( A \nabla (f_t^A|_O) \cdot \nabla (\partial_{\zeta} R_{n, \nu}) (f_t^A|_O, g_t^B|_O) \Big)
		\\&\qquad \qquad \qquad \qquad + \re \Big( B \nabla (g_t^B|_O) \cdot \nabla (\partial_{\eta} R_{n, \nu}) (f_t^A|_O, g_t^B|_O) \Big) \, \d x.
    \end{split}
    \end{align}
	At this point, $j$ has disappeared and we are exactly in the setting of \cite{CD_Open_Set}. 
 
    Let us focus on the first term in the integral. By the chain rule, applying $\nabla$ to $\partial_{\zeta} R_{n, \nu}$ at the point $(z_1, z_2) \coloneqq (f_t^A|_O, g_t^B|_O) \in \C^2$ produces second-order derivatives of $R_{n, \nu}$ that are multiplied with the gradient pair $(X_1, X_2) \coloneqq (\nabla (f_t^A|_O), \nabla (g_t^B|_O)) \in (\C^d)^2$ and we obtain $(\partial_{\zeta}^2 R_{n, \nu})(z_1, z_2) X_1 + (\partial_{\eta} \partial_{\zeta} R_{n, \nu})(z_1, z_2) X_2$. Thus, the first term in the integral is 
	\begin{equation*}
		\re \Big( A X_1 \cdot (\partial_{\zeta}^2 R_{n, \nu})(z_1, z_2) X_1 + A X_1 \cdot (\partial_{\eta} \partial_{\zeta} R_{n, \nu})(z_1, z_2) X_2 \Big)
	\end{equation*}
    and we have a similar formula for the second one. Their sum is the following object for $F=R_{n,\nu}$.
    
    \begin{definition} \label{BE: Def: Generalized Hessian}
    Let $F \colon \C^2 \to \R$ be twice (real) differentiable, $x \in O$, $z_1, z_2 \in \C$ and $X_1, X_2 \in \C^d$. We define the \emph{generalized Hessian} of $F$ with respect to $A$ and $B$ as 
	\begin{equation} \label{eq: Bilinear estimate: Generalized Hessian}
		H_F^{(A(x), B(x))}[(z_1, z_2); (X_1, X_2)] \coloneqq \re \left( \begin{bmatrix}
			A(x) X_1 \\ B(x) X_2
		\end{bmatrix} \cdot (D^2 F)(z_1, z_2) \begin{bmatrix}
			X_1 \\ X_2
		\end{bmatrix} \right),
	\end{equation}
    where we identify each entry of the Hessian $(D^2 F)(z_1, z_2) \in \C^{2 \times 2}$ with a multiplication operator in $\C^d$.
    \end{definition} 
    
    In this terminology, \eqref{eq: BE, E'(t) part 2} becomes
    \begin{align}
    \label{eq: BE, E'(t) part 3}
		- \frac{1}{2} \energy'(t) &= \lim_{\nu \to 0} \lim_{n \to \infty} \int_{O} H_{R_{n,\nu}}^{(A, B)}\Big[(f_t^A|_O, g_t^B|_O); (\nabla (f_t^A|_O), \nabla (g_t^B|_O))\Big] \, \d x.
    \end{align}

    The beautiful insight of Carbonaro--Dragičević in \cite{CD_JEMS} was to establish the link between $p$-ellipticity of $A$ and $B$ and pointwise lower bounds for certain generalized Hessians, which they call \emph{$\boldsymbol{(A,B)}$-convexity}. 
	
	\begin{proposition}[{\cite[Cor.~5.5]{CD_JEMS} \& \cite[Thm.~16]{CD_Open_Set}}] \label{Bilinear estimate: Prop: (A,B)-convexity}
		Let $\nu \in (0,1]$. There is some $\delta \in (0,1)$, depending only on $\lambda(A,B)$, $\Lambda(A,B)$ and $\Delta_p(A,B)$, such that for almost every $x \in O$, all $z_1, z_2 \in \C$ and every $X_1, X_2 \in \C^d$ we have:
        \begin{enumerate}
        \item The generalized Hessian of $R_{n,\nu}$ is non-negative.
        \item The generalized Hessian of $Q * \varphi_{\nu}$ satisfies the explicit lower bound
		\begin{equation*}
			H_{Q * \varphi_{\nu}}^{(A(x), B(x))}[(z_1, z_2); (X_1, X_2)] \geq \frac{\Delta_p(A, B)}{5} \cdot \frac{\lambda(A, B)}{\Lambda(A, B)} |X_1| |X_2|.
		\end{equation*}
        \end{enumerate}
	\end{proposition}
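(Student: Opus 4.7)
The statement is attributed to \cite{CD_JEMS, CD_Open_Set}, and the plan is to explain why their arguments apply verbatim here. The generalized Hessian \eqref{eq: Bilinear estimate: Generalized Hessian} depends only on the coefficient matrices $A(x), B(x)$ pointwise on $O$; the dynamical set $\Sigma$ and the identification map $j$ play no role. Hence there is no conceptual new difficulty compared to the open-set case treated in~\cite{CD_Open_Set}.

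The core step is to establish (ii) for the unmollified Bellmann function $Q$ at all $(\zeta,\eta) \in \C^2$ with $|\zeta|^p \neq |\eta|^{p'}$, i.e.\ away from the submanifold on which $Q$ fails to be twice differentiable. On each of the two complementary open regions $|\zeta|^p < |\eta|^{p'}$ and $|\zeta|^p > |\eta|^{p'}$, the function $Q$ is smooth in the radii $|\zeta|, |\eta|$ and $D^2 Q$ can be computed explicitly. The decisive observation of Carbonaro--Dragi\v{c}evi\'c is that the $\delta$-free part of $Q$ reduces to a sum of pure power functions $|\zeta|^p$ and $|\eta|^{p'}$, whose $(A,B)$-generalized Hessians are bounded below by $\Delta_p(A,B)$ times the diagonal quadratic form $|\zeta|^{p-2}|X_1|^2 + |\eta|^{p'-2}|X_2|^2$; this is the algebraic manifestation of the $p$-ellipticity condition \eqref{eq: Intro: p-ellipticity}. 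The $\delta$-correction in~\eqref{eq: BE: Bellman function} is engineered to produce the mixed term $|X_1||X_2|$: on $|\zeta|^p \leq |\eta|^{p'}$, the cross-derivative of $\delta|\zeta|^2|\eta|^{2-p'}$ yields exactly such a contribution, which after an arithmetic-geometric mean step and the plain ellipticity of $A, B$ gives a lower bound proportional to $\lambda(A,B)/\Lambda(A,B)$. Choosing $\delta \in (0,1)$ small enough, depending only on $\lambda(A,B)$, $\Lambda(A,B)$ and $\Delta_p(A,B)$, absorbs any indefinite cross terms into the diagonal $p$-ellipticity contributions and leaves the final bound $\tfrac{\Delta_p(A,B)}{5}\cdot\tfrac{\lambda(A,B)}{\Lambda(A,B)}|X_1||X_2|$ intact; the bookkeeping of constants is carried out in detail in \cite[Sec.~5]{CD_JEMS}.

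Next I would transfer this bound to $Q*\varphi_\nu$. Since $\varphi_\nu \geq 0$ is a probability density and $D^2$ commutes with convolution, the identity $D^2(Q*\varphi_\nu)(z) = \int_{\R^4}(D^2 Q)(z-\mathcal{W}_2^{-1}\omega)\varphi_\nu(\omega)\,\d\omega$ propagates the pointwise lower bound to all $z \in \C^2$, which gives (ii). Part (i) is more delicate because the cut-off $\psi_n$ produces spurious product-rule terms of the form $2(D\psi_n \otimes D(Q*\varphi_\nu)) + (D^2\psi_n)(Q*\varphi_\nu)$ in $D^2(\psi_n(Q*\varphi_\nu))$ which are sign-indefinite. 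The correction $P_{n,\nu}$ built in \cite[Sec.~5.2]{CD_Open_Set} is engineered precisely so that its generalized Hessian dominates these defect terms, restoring non-negativity of $H_{R_{n,\nu}}^{(A,B)}$ without spoiling the quantitative bound (ii) in the iterated limits $n\to\infty$, $\nu\to 0$.

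The main obstacle is the pointwise algebraic inequality for $Q$ itself: it is exactly here that $p$-ellipticity, rather than mere ellipticity, is indispensable, and where the quantitative dependence on $\Delta_p(A,B)$, $\lambda(A,B)$, $\Lambda(A,B)$ is generated. Once this is granted, mollification and the $P_{n,\nu}$-correction are routine. Since the entire argument is pointwise in $x \in O$ and does not see $\Sigma$ or $j$, no modification of the Carbonaro--Dragi\v{c}evi\'c proof is required beyond transcribing it.
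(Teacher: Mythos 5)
The paper does not prove this proposition itself: it is imported verbatim from Carbonaro--Dragi\v{c}evi\'c with the citations displayed in the statement, and your observation that the generalized Hessian is computed pointwise in $x\in O$ and is therefore blind to $\Sigma$ and $j$ is exactly why the import requires no adaptation. Your sketch is consistent in spirit and structure with the arguments of the cited works. One imprecision is worth flagging: you attribute the mixed term $|X_1||X_2|$ solely to the cross-derivative of $\delta|\zeta|^2|\eta|^{2-p'}$, but that piece of $Q$ only lives on the region $|\zeta|^p\leq|\eta|^{p'}$. On the complementary region $|\zeta|^p\geq|\eta|^{p'}$ the $\delta$-correction is the cross-term-free $\delta\bigl(\tfrac{2}{p}|\zeta|^p+(1-\tfrac{2}{p})|\eta|^{p'}\bigr)$, and there the mixed term is produced from the diagonal $p$-ellipticity contributions $|\zeta|^{p-2}|X_1|^2+|\eta|^{p'-2}|X_2|^2$ via the arithmetic--geometric mean inequality, using that $|\zeta|^{p-2}|\eta|^{p'-2}\geq 1$ on this region. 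This is a detail internal to the Bellman computation and orthogonal to the dynamical-boundary aspect, so it does not undermine your overall conclusion.
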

    
	Since the generalized Hessian of $R_{n,\nu}$ is non-negative almost everywhere, we can use \ref{BE: Lem: Properties of approximations,  Pw limits for derivatives} and Fatou's lemma in \eqref{eq: BE, E'(t) part 3} in order to obtain
	\begin{align*} 
		- \frac{1}{2} \energy'(t) &\geq \liminf_{\nu \to 0} \int_O H_{Q * \varphi_{\nu}}^{(A, B)} \Big[(f_t^A|_O, g_t^B|_O); (\nabla (f_t^A|_O), \nabla (g_t^B|_O) ) \Big] \, \d x
		\\&\geq \frac{\Delta_p(A, B)}{5} \cdot \frac{\lambda(A, B)}{\Lambda(A, B)} \int_O |\nabla (f_t^A|_O)| |\nabla(g_t^B|_O)| \, \d x.
	\end{align*}
	Now, integration with respect to $t \in [0, T]$ gives
	\begin{align*}
		\frac{\Delta_p(A, B)}{5} \cdot \frac{\lambda(A, B)}{\Lambda(A, B)} \int_0^T \int_O \big |\nabla (f_t^A|_O)\big| \cdot \big|\nabla (g_t^B|_O)\big| \, \d x \, \d t &\leq \energy(0) - \energy(T) \leq \energy(0) 
	\end{align*}
	and from \eqref{eq: BE: Upper bound for Q} and letting $T \to \infty$ we obtain
	\begin{equation*}
		\frac{\Delta_p(A, B)}{5} \cdot \frac{\lambda(A, B)}{\Lambda(A, B)} \int_0^{\infty} \int_O \big|\nabla (f_t^A|_O)\big| \cdot \big|\nabla (g_t^B|_O)\big| \, \d x \, \d t \leq 2 \big( \| f \|_p^p + \| g \|_{p'}^{p'} \big).
	\end{equation*}
	Finally, we replace $(f,g)$ by $(s f, s^{-1} g)$ for $s >0$ and optimize in $s$ to conclude the proof of Theorem~\ref{Intro: Thm: BE}.

	\section{\texorpdfstring{Bounded $\H^{\infty}$-calculus in $\IL^p$ and maximal regularity}{}} \label{Sec: Bounded FC in Lp}
	
	In this section, we complete the proof of Theorem~\ref{Intro: Thm: Bounded FC in Lp}. We assume that $A$ is $p$-elliptic. Let us introduce the relevant terminology, while referring to \cite{Denk-Hieber-Pruess_Memoirs, Haase-book} for background on functional calculus and maximal parabolic regularity.
	
	\begin{definition} \label{Bounded FC in Lp: Def: Dunford-Riesz}
		Let $\omega \in (0, \pi]$. The \textbf{Dunford--Riesz class $\boldsymbol{\H^{\infty}_0(\S_{\omega})}$} consists of all holomorphic functions $\varphi \colon \S_{\omega} \to \C$ with $|\varphi(z)| \leq C (|z|^s \land |z|^{-s})$ for some $C, s> 0$ and all $z \in \S_{\omega}$. 
	\end{definition}
	
	Given $\nicefrac{\pi}{2} - \theta_p < \nu < \omega < \pi$ and $\varphi \in \H_0^{\infty}(\S_{\omega})$, we define $\varphi(\LL_p^A) \in \cL(\IL^p)$ via the Cauchy integral 
	\begin{equation*}
		\varphi(\LL_p^A) \coloneqq \frac{1}{2 \pi \i} \int_{\partial \S_{\nu}} \varphi(z) (z - \LL_p^A)^{-1} \, \d z,
	\end{equation*}
	where the boundary is positively oriented around the spectrum. We note that Lemma~\ref{Lp contractivity: Lem: Lp realization} yields
	\begin{equation} \label{eq: Bounded FC in Lp: FC consistent}
		\varphi(\LL_p^A) f = \varphi(\LL^A) f \qquad (f \in \IL^p \cap \IL^2). 
	\end{equation}
	
	\begin{definition} \label{Bounded FC in Lp: Def: BFC}
		We say that $\LL_p^A$ admits a \textbf{bounded $\boldsymbol{\H^{\infty}}$-calculus of angle $\boldmath{\nicefrac{\pi}{2} - \theta_p}$} if for all $\omega \in (\nicefrac{\pi}{2} - \theta_p, \pi)$ there is some $C > 0$ such that
		\begin{equation} \label{eq: Lp bound FC}
			\| \varphi(\LL_p^A) \|_{\cL(\IL^p)} \leq C \| \varphi \|_{\L^{\infty}(\S_{\omega})} \qquad (\varphi \in \H_0^{\infty}(\S_{\omega})). 
		\end{equation}
	\end{definition}
	
	To prove the second assertion of Theorem~\ref{Intro: Thm: Bounded FC in Lp}, we follow \cite{CD_Open_Set} and first prove the following \emph{weak quadratic estimates} in the spirit of \cite{CDMcIY_BFC}. Since $\IL^p \cap \IL^{p'} \sub \IL^2$, we can avoid $\IL^p$-realizations at this stage. 
	
	\begin{proposition}  \label{Bounded FC in Lp: Prop: Weak QE}
		Let $p \in (1, \infty)$, $A$ be $p$-elliptic and $\theta \in (- \theta_p, \theta_p)$. Then there exists $C > 0$ such that we have
		\begin{equation*}
			\int_0^{\infty} \big| \big \langle t \e^{\i \theta} \LL^A \e^{- t \e^{\i \theta} \LL^A} f,  g\big \rangle \big| \, \frac{\d t}{t} \leq C \Vert f \Vert_p \Vert g \Vert_{p'} \qquad (f,g \in \IL^p \cap \IL^{p'}).
		\end{equation*}
	\end{proposition}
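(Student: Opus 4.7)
The plan is to deduce Proposition~\ref{Bounded FC in Lp: Prop: Weak QE} directly from the bilinear embedding of Theorem~\ref{Intro: Thm: BE} by the familiar \emph{split-and-dualise} trick of sectorial calculus. Set $A_\theta \coloneqq \e^{\i \theta} A$, so that $\e^{-t \e^{\i \theta} \LL^A} = \e^{-t \LL^{A_\theta}}$. By definition of $\theta_p$, $A_\theta$ is $p$-elliptic, and applying Lemma~\ref{p-ellipticity: Lem: Basics}\ref{p-ellipticity: Lem: Basics, duality in A} to $\e^{-\i \theta} A$ gives
\begin{align*}
\Delta_p((A_\theta)^*) = \Delta_p((\e^{-\i \theta} A)^*) \geq \Delta_p(\e^{-\i \theta} A) \cdot (\nicefrac{p}{p'} \wedge \nicefrac{p'}{p}) > 0,
\end{align*}
so $(A_\theta)^*$ is also $p$-elliptic and Theorem~\ref{Intro: Thm: BE} is applicable to the pair $(A_\theta, (A_\theta)^*)$. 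Observe also that $|\theta| < \theta_p \leq \nicefrac{\pi}{2} - \omega(A)$ ensures that $A_\theta$ is uniformly strongly elliptic so that $\LL^{A_\theta}$ is well-defined in the sense of Section~\ref{Sec: L2-theory}.

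Next, the factor of $t$ in $t \LL^{A_\theta} \e^{-t \LL^{A_\theta}}$ cancels the measure $\nicefrac{\d t}{t}$, and splitting $\e^{-t \LL^{A_\theta}} = \e^{-(t/2) \LL^{A_\theta}} \e^{-(t/2) \LL^{A_\theta}}$ together with $(\LL^{A_\theta})^* = \LL^{(A_\theta)^*}$ in $\IL^2$ allows me to rewrite, for $f, g \in \IL^p \cap \IL^{p'} \subseteq \IL^2$,
\begin{align*}
\tfrac{1}{t}\big| \langle t \LL^{A_\theta} \e^{-t \LL^{A_\theta}} f, g \rangle \big| = \big| \langle \LL^{A_\theta} f_t, g_t \rangle \big|, \qquad f_t \coloneqq \e^{-(t/2) \LL^{A_\theta}} f, \quad g_t \coloneqq \e^{-(t/2) \LL^{(A_\theta)^*}} g.
\end{align*}
Analyticity of both semigroups in $\IL^2$ ensures $f_t \in \D(\LL^{A_\theta}) \subseteq j(V)$ and $g_t \in \D(\LL^{(A_\theta)^*}) \subseteq j(V)$ for every $t > 0$, so the very definition of $\LL^{A_\theta}$, combined with the injectivity of $j$, yields the form representation
\begin{align*}
\langle \LL^{A_\theta} f_t, g_t \rangle = a_\theta(f_t|_O, g_t|_O) = \int_O A_\theta \nabla (f_t|_O) \cdot \overline{\nabla (g_t|_O)} \, \d x.
\end{align*}

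Bounding this pointwise by $\Lambda(A)|\nabla (f_t|_O)| \cdot |\nabla (g_t|_O)|$ and substituting $s = \nicefrac{t}{2}$ gives
\begin{align*}
\int_0^\infty \big| \langle t \LL^{A_\theta} \e^{-t \LL^{A_\theta}} f, g \rangle \big| \, \tfrac{\d t}{t} \leq 2 \Lambda(A) \int_0^\infty \!\! \int_O \big| \nabla (\e^{-s \LL^{A_\theta}} f)|_O \big| \cdot \big| \nabla (\e^{-s \LL^{(A_\theta)^*}} g)|_O \big| \, \d x \, \d s,
\end{align*}
and Theorem~\ref{Intro: Thm: BE} applied to $(A_\theta, (A_\theta)^*)$ produces the required bound $C \|f\|_p \|g\|_{p'}$. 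The only point that requires some care — but is not a genuine obstacle — is ensuring that the intermediate vectors $f_t$ and $g_t$ lie in $j(V)$, so that the form representation of $\LL^{A_\theta}$ may legitimately be invoked; this is guaranteed by the construction of $\LL^{A_\theta}$ together with the analyticity of both semigroups.
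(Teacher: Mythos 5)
Your proposal follows the same route as the paper's proof: abbreviate $A_\theta = \e^{\i\theta}A$, verify that $A_\theta$ and $(A_\theta)^*$ are both $p$-elliptic, split the semigroup in half, pass one factor to the other side via $(\LL^{A_\theta})^* = \LL^{(A_\theta)^*}$, use the form representation of $\LL^{A_\theta}$ to turn the pairing into an integral over $O$, and invoke Theorem~\ref{Intro: Thm: BE} for the pair $(A_\theta, (A_\theta)^*)$.

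One small bookkeeping slip: you write $(A_\theta)^* = (\e^{-\i\theta}A)^*$, but since adjoints conjugate scalars, $(A_\theta)^* = (\e^{\i\theta}A)^* = \e^{-\i\theta}A^*$, whereas $(\e^{-\i\theta}A)^* = \e^{\i\theta}A^*$; these differ. The correct (and simpler) way to reach your conclusion is to apply Lemma~\ref{p-ellipticity: Lem: Basics}~\ref{p-ellipticity: Lem: Basics, duality in A} directly to $A_\theta$, giving $\Delta_p((A_\theta)^*)\geq \Delta_p(A_\theta)(\nicefrac{p}{p'}\wedge\nicefrac{p'}{p})>0$. The rest of the argument is unaffected.
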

	
	\begin{proof}
		We abbreviate $A(\theta) \coloneqq \e^{\i \theta} A$. Thanks to Lemma~\ref{p-ellipticity: Lem: Basics}~\ref{p-ellipticity: Lem: Basics, duality in A}, we can apply Theorem~\ref{Intro: Thm: BE} to the pair $(A(\theta), A(\theta)^*)$ and get
		\begin{align*}
			\int_0^{\infty} \big| \big \langle t \LL^{A(\theta)} \e^{- 2 t \LL^{A(\theta)}} f, g \big \rangle \big| \, \frac{\d t}{t} &= \int_0^{\infty}  \big| \big \langle \LL^{A(\theta)} f_t^{A(\theta)}, g_t^{A(\theta)^*} \big \rangle \big| \, \d t
			\\&= \int_0^{\infty} \left| \int_O A(\theta) \nabla (f_t^{A(\theta)})|_O \cdot \overline{\nabla (g_t^{A(\theta)^*})|_O} \, \d x \right| \, \d t
			\\&\leq \Lambda(A) \int_0^{\infty} \int_O \big | \nabla (f_t^{A(\theta)})|_O\big| \cdot \big |\nabla (g_t^{A(\theta)^*})|_O\big| \, \d x \, \d t
			\\&\leq C \Lambda(A) \Vert f \Vert_p \Vert g \Vert_{p'}. \qedhere 
		\end{align*}
	\end{proof}
	
	\begin{proof}[\rm\bf{Proof of Theorem~\ref{Intro: Thm: Bounded FC in Lp}, part 2.}]
		We fix $\omega \in (\nicefrac{\pi}{2} - \theta_p, \pi)$. A classical theorem on quadratic estimates \cite[Thm.~4.6 \& Ex.~4.8]{CDMcIY_BFC} states that the estimate in Proposition~\ref{Bounded FC in Lp: Prop: Weak QE} implies that there is $C > 0$ such that
		\begin{equation*}
			\| \varphi(\LL^A) f \|_p \leq C \| \varphi \|_{\L^\infty(\S_{\omega})} \| f \|_p  \qquad (\varphi \in \H_0^{\infty}(\S_{\omega}), f \in \IL^p \cap \IL^{p'}).
		\end{equation*}
        Indeed, we apply the results in \cite{CDMcIY_BFC} to the dual pair $(\IL^p \cap \IL^2, \IL^{p'} \cap \IL^2)$. They require that $\LL^A$ has dense range only for the convergence lemma \cite[p.69]{CDMcIY_BFC}. If $\varphi \in \H_0^{\infty}(\S_{\omega})$, then the latter holds without this extra assumption. By \eqref{eq: Bounded FC in Lp: FC consistent} and density, we obtain \eqref{eq: Lp bound FC}. 
	\end{proof}
	
	Finally, we establish maximal regularity for $\LL_p^A$.
	
	\begin{definition}
		Let $T \in (0, \infty)$. We say that $\LL_p^A$ has \textbf{maximal parabolic regularity} if for some $q \in (1, \infty)$ (or equivalently all $q \in (1, \infty)$) and all $f \in \L^q(0, T; \IL^p)$ the mild solution 
		\begin{equation*}
			u\colon [0,T) \to \IL^p, \quad u(t) = \int_0^t \e^{-(t-s)\LL_p^A}f(s) \, \d s
		\end{equation*}
		to the abstract Cauchy problem
		\begin{align*}
			u'(t) + \LL_p^A u(t) &= f(t) \qquad (t \in (0, T)),\\
			u(0) &=0,
		\end{align*}
		is (Fr\'echet-)differentiable a.e., takes its values in $\D(\LL_p^A)$ a.e.\ and $u', \LL_p^A u$ belong to $\L^q(0, T; \IL^p)$.
	\end{definition}
	
	\begin{proof}[\rm\bf{Proof of Theorem~\ref{Intro: Thm: Bounded FC in Lp}, part 3.}]
		This is just the Dore--Venni theorem \cite[Cor.~9.3.12]{Haase-book} applied to the invertible operator $1+ \LL_p^A$. Indeed, the property of maximal regularity is invariant under shifting the operator and $1+\LL_p^A$ has a bounded $\H^\infty$-calculus of angle $\nicefrac{\pi}{2}-\theta_p$. Therefore, $1 + \LL_p^A$ has bounded imaginary powers~\cite[p.88]{Haase-book}.
	\end{proof}
	
	
	\section{Elliptic regularity for dynamical boundary conditions} \label{Sec: Elliptic regularity for DBC}

        In this section, we illustrate how to prove H\"older estimates for elements in the domain of (fractional powers of) $\LL_p^A$ for large $p$. In order to avoid further technicalities, we work in the concrete geometric setting of Section~\ref{Subsec: Concrete geometry} and more specifically assume the following. 

        \begin{assumption} \label{Elliptic regularity for DBC: Ass: Geometry}
		Throughout this section, we suppose that 
		\begin{enumerate}
            \item $O$ is a bounded domain and $V \neq \W^{1,2}_{\varnothing}(O)$. 
			\item $O$ is locally uniform near $\partial O \setminus D$ and there is $\ell \in (d-2,d)$ such that $D$ is an $\ell$-set and $\Sigma$ is an upper $\ell$-set.
		\end{enumerate}
	\end{assumption}

	Boundedness of $O$ implies that $\W^{-1,q}_D(O) \sub V^*$ and $\IL^p \sub \IL^2$ for all $p, q \in [2, \infty)$. By Remark~\ref{Formula for L via L-M: Rem: Coercive a}, the Lax--Milgram operator $\cL \colon V \to V^*$ is invertible. Hence, Lemma~\ref{Formula for L via L-M: Lem: Repres via j} implies that also $\LL^A$ is invertible. Moreover, Theorem~\ref{Traces: Thm: Extension to Lp} yields that $j^* \colon \IL^p \to \W^{-1,q}_D(O)$ is bounded provided that
	\begin{equation} \label{eq: Elliptic regularity for DBC, choice for p,q}
		q \in [2, ((d - \ell) \lor 1)') \quad \& \quad p \in ((\tfrac{\ell q'}{ d-q'})', q] \cap [2, q].
	\end{equation}

        \begin{definition} \label{Elliptic regularity for DBC: Def: Hölder space}
		Let $\mu \in (0,1]$. The \emph{H\"older space $\boldsymbol{\rC^{\mu}(O)}$} consists of all bounded and $\mu$-H\"older continuous functions on $O$ with norm 
		\begin{equation*}
			\| u \|_{\rC^{\mu}(O)} \coloneqq \| u \|_{\L^{\infty}(O)} + \sup_{x, y \in O, x \neq y} \frac{|u(x) - u(y)|}{|x-y|^{\mu}}. 
		\end{equation*}
        The subspace $\rC^{\mu}_D(O)$ consists of all functions in $\rC^{\mu}(O)$ whose unique continuous extension to $\overline{O}$ vanishes in $D$.
	\end{definition}

        Our first result concerns global H\"older regularity to a variational problem with dynamical boundary conditions.
	
	\begin{theorem} \label{Elliptic regularity for DBC: Thm: Hölder through static LM}
		Let $\mu \in (0,1]$ and $p,q$ as in \eqref{eq: Elliptic regularity for DBC, choice for p,q}. If $\cL^{-1} \colon \W^{-1,q}_D(O) \to \rC^{\mu}(O)$ is bounded, then for every $f \in \IL^p$ the unique solution $u$ to the variational problem $\LL^A u = f$ belongs to $\rC^{\mu}_D(O)$. 
	\end{theorem}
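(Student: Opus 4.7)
The plan is to combine the transference formula of Lemma~\ref{Formula for L via L-M: Lem: Repres via j} with the $\L^p$-boundedness of $j^*$ coming from Theorem~\ref{Traces: Thm: Extension to Lp}, and then feed the output into the assumed mapping property of $\cL^{-1}$. Under Assumption~\ref{Elliptic regularity for DBC: Ass: Geometry}, $O$ is a bounded domain and $V \neq \W^{1,2}_{\varnothing}(O)$, so by Remark~\ref{Formula for L via L-M: Rem: Coercive a} the form $a$ is coercive and $\cL \colon V \to V^*$ is invertible. Hence Lemma~\ref{Formula for L via L-M: Lem: Repres via j} applies and gives $(\LL^A)^{-1} = j \cL^{-1} j^*$; in particular $\LL^A$ is invertible, so for each $f \in \IL^p \subset \IL^2$ there is a unique $u \in \D(\LL^A)$ solving $\LL^A u = f$.

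Next I would read off the regularity of $j^* f$. Dualizing the bounded extension $j \colon \W^{1,q'}_D(O) \to \IL^{p'}$ furnished by Theorem~\ref{Traces: Thm: Extension to Lp}---the reciprocal pair $(q',p')$ lies in its admissible range precisely because $(p,q)$ satisfies~\eqref{eq: Elliptic regularity for DBC, choice for p,q}---produces a bounded operator $j^* \colon \IL^p \to \W^{-1,q}_D(O)$. Composing with the assumed bounded $\cL^{-1} \colon \W^{-1,q}_D(O) \to \rC^{\mu}(O)$ yields $v \coloneqq \cL^{-1} j^* f \in V \cap \rC^{\mu}(O)$, and by the transference formula $u = j(v) = v|_O \oplus \tr(v)$.

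It remains to upgrade $v \in \rC^{\mu}(O)$ to $v \in \rC^{\mu}_D(O)$, i.e.\ to show that the unique continuous extension of $v$ to $\overline{O}$ vanishes pointwise on $D$. Since $D$ is an $\ell$-set with $\ell > d-2$, Theorem~\ref{Traces: Thm: All traces coincide and are bounded} applied with $D$ in place of $\Sigma$ provides a bounded trace $\tr_D \colon V \to \L^2(D, \cH^{\ell})$. This trace annihilates the dense subspace $\smoothD[O] \subset V$, whose elements vanish in a neighborhood of $D$, hence $\tr_D(v) = 0$ in $\L^2(D, \cH^{\ell})$. If the continuous extension of $v$ to $\overline{O}$ did not vanish identically on $D$, its nonzero set would be relatively open in $D$ and the lower $\ell$-set bound would force this set to have positive $\cH^{\ell}$-mass, contradicting $\tr_D(v) = 0$. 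Thus $v \in \rC^{\mu}_D(O)$, and the injectivity of $j$ together with continuity of $v$ on $\overline{O}$ identify $u$ unambiguously with $v$ on $O \cup \Sigma$.

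The step that requires the most care is the dualization in the second paragraph: one must verify that the reciprocal pair $(q', p')$ really falls into the admissible range of Theorem~\ref{Traces: Thm: Extension to Lp}, which is exactly the content of the compatibility condition~\eqref{eq: Elliptic regularity for DBC, choice for p,q}. Everything else is a routine chase through bounded operators once the transference formula is in hand.
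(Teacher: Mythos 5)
Your first three steps reproduce the paper's proof exactly: Assumption~\ref{Elliptic regularity for DBC: Ass: Geometry} plus Remark~\ref{Formula for L via L-M: Rem: Coercive a} give coercivity, Lemma~\ref{Formula for L via L-M: Lem: Repres via j} then yields $(\LL^A)^{-1}=j\cL^{-1}j^*$, and Theorem~\ref{Traces: Thm: Extension to Lp} furnishes $j^*\colon\IL^p\to\W^{-1,q}_D(O)$ bounded under~\eqref{eq: Elliptic regularity for DBC, choice for p,q}. So far this is in lockstep with the paper.

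The fourth step, however, has a genuine gap. You invoke Theorem~\ref{Traces: Thm: All traces coincide and are bounded} ``with $D$ in place of $\Sigma$'' to produce a bounded trace $\tr_D\colon V\to\L^2(D,\cH^\ell)$ and to identify it with pointwise values. That theorem is proved through Corollary~\ref{Traces: Cor: Tr_I = Tr_G for Lebesgue point}, which in turn rests on the measure-density estimate of Proposition~\ref{Traces: Prop: Degenerate ITC}; that estimate only holds at points $x\in\overline{O}\setminus D$ and radii $r<\dist(x,D)$. At points of $D$ the hypothesis $r<\dist(x,D)$ is vacuous, so the identification $\tr_{\glob,D}(\ext u)=\tr_{\mathrm{int},D}(u)=\tr(u)$ is not available, and the entire structure of the section presupposes that the trace set lives in $\overline{O}\setminus D$. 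Moreover, even granting the bounded operator $\tr_{\glob,D}\circ\ext\colon V\to\L^2(D)$ (from Jonsson--Wallin alone), your claim that it annihilates $\smoothD[O]$ is not automatic: for $\phi\in\smoothD[O]$ you would need $\ext(\phi)$ to vanish near $D$ in the $\W^{1,2}(\R^d)$ sense, which is a property of the specific extension operator that you neither state nor use. The paper does not attempt this inline and instead cites \cite[Lem.~4.8 \& App.~B]{B-C-E_GE_vs_ER}, precisely because the vanishing of a H\"older-continuous element of $\W^{1,2}_D(O)$ on an $\ell$-set $D$ is a delicate point not covered by Section~\ref{Sec: Geometry} of the present paper. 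Your final ``open nonzero set has positive $\cH^\ell$-mass'' argument is fine once $\tr_D(v)=0$ and its identification with the continuous extension are in hand, but those are exactly the two facts that are missing.
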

	
	\begin{proof}
		The assumption joint with Lemma~\ref{Formula for L via L-M: Lem: Repres via j} and Theorem~\ref{Traces: Thm: Extension to Lp} entail that $$u \coloneqq (\LL^A)^{-1} f  = j \cL^{-1} j^* f \in \rC^{\mu}(O) \cap \D(\LL^A).$$
		Since $D$ is an $\ell$-set for some $\ell \in (d-2,d)$, we can use \cite[Lem.~4.8 \& App.~B]{B-C-E_GE_vs_ER} to conclude that the continuous extension of $u$ to $\overline{O}$ vanishes in $D$.
	\end{proof}

	\begin{remark}
		The assumption on $\cL^{-1}$ in Theorem~\ref{Elliptic regularity for DBC: Thm: Hölder through static LM} is known in a variety of settings, compare with \cite{Auscher-Badr-Haller-Rehberg_Lp_real, Bechtel_Lp, Egert_Riesz-Trafo, Haller-Meinlschmidt-Rehberg_Fractional_Powers}. Interestingly, these references rely on results from harmonic analysis, such as Hölderian Gaussian estimates and the solution of the Kato problem in the `non-dynamical setting' \cite{Kato_Mixed, B-C-E_GE_vs_ER} and we do not know whether these results themselves also hold in our `dynamical setting'.
	\end{remark}

	Theorem~\ref{Elliptic regularity for DBC: Thm: Hölder through static LM} is the starting point for investigating operator theoretic regularity, that is, embeddings of domains of $\IL^p$-realizations into H\"older spaces. The following result is new even for small complex perturbations of real coefficients, so for $p$-elliptic $A$ with very large~$p$.
	
	\begin{corollary} \label{Elliptic regularity for DBC: Cor: dom(L_p) embeds into Hölder}
		Let $\mu \in (0,1]$, $p,q$ as in \eqref{eq: Elliptic regularity for DBC, choice for p,q} and let $A$ be $p$-elliptic. If $\cL^{-1} \colon \W^{-1,q}_D(O) \to \rC^{\mu}(O)$ is bounded, then $\D(\LL_p^A) \sub \rC^{\mu}_D(O)$ with continuous inclusion.
	\end{corollary}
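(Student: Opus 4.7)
The plan is to chain the factorization $(\LL^A)^{-1} = j\cL^{-1}j^*$ from Lemma~\ref{Formula for L via L-M: Lem: Repres via j} with the H\"older estimate of Theorem~\ref{Elliptic regularity for DBC: Thm: Hölder through static LM}, transferring the conclusion from the static problem to the $\IL^p$-realization of the dynamical operator.

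First, I would take $u \in \D(\LL_p^A)$ and put $f \coloneqq \LL_p^A u \in \IL^p$. Since $O$ is bounded and $p \geq 2$, we have $\IL^p \subseteq \IL^2$, so in particular $f \in \IL^2$. Under Assumption~\ref{Elliptic regularity for DBC: Ass: Geometry}, the form $a$ is coercive by Remark~\ref{Formula for L via L-M: Rem: Coercive a}, so Lemma~\ref{Lp contractivity: Lem: Lp realization}~\ref{Lp contractivity: Lem: Lp realization, L invertible} applies: $\LL_p^A$ is invertible and $(\LL_p^A)^{-1}$ agrees with $(\LL^A)^{-1}$ on $\IL^p \cap \IL^2 = \IL^p$. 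In particular, $u = (\LL^A)^{-1} f$, so $u \in \D(\LL^A)$ with $\LL^A u = f$, i.e.\ $u$ is the unique variational solution to $\LL^A u = f$ in the sense of Theorem~\ref{Elliptic regularity for DBC: Thm: Hölder through static LM}.

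Second, I would apply Theorem~\ref{Elliptic regularity for DBC: Thm: Hölder through static LM} directly. Its hypotheses (boundedness of $\cL^{-1} \colon \W^{-1,q}_D(O) \to \rC^{\mu}(O)$ and the choice \eqref{eq: Elliptic regularity for DBC, choice for p,q} of $p,q$, which ensures boundedness of $j^* \colon \IL^p \to \W^{-1,q}_D(O)$) are exactly what is assumed here, and it yields that the $V$-component $w \coloneqq \cL^{-1} j^* f$ of $u = j(w)$ belongs to $\rC^{\mu}_D(O)$. Identifying $u$ with $w$ via the injective map $j$ (equivalently, via $u \mapsto u|_O$ in the $\IL^p$-decomposition) realizes the set-theoretic inclusion $\D(\LL_p^A) \subseteq \rC^{\mu}_D(O)$.

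Third, I would quantify the embedding. Chaining operator norms,
\begin{equation*}
\|w\|_{\rC^{\mu}_D(O)} \leq \|\cL^{-1}\|_{\W^{-1,q}_D(O) \to \rC^{\mu}(O)} \, \|j^*\|_{\IL^p \to \W^{-1,q}_D(O)} \, \|\LL_p^A u\|_p,
\end{equation*}
and since $\LL_p^A$ is boundedly invertible on $\IL^p$, the right-hand side is controlled by a constant times the graph norm $\|u\|_{\D(\LL_p^A)} = \|u\|_p + \|\LL_p^A u\|_p$. This proves continuity of the inclusion.

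I do not expect a substantive obstacle: the whole argument is a factorization, which is exactly what Lemma~\ref{Formula for L via L-M: Lem: Repres via j} and Lemma~\ref{Lp contractivity: Lem: Lp realization} were designed to supply. The only point demanding any care is the identification $u \leftrightarrow w$ through $j$, which is legitimate because $j$ is injective and every element of $\D(\LL_p^A) \subseteq j(V)$ has a unique preimage in $V$.
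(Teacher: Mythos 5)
Your proposal is correct and follows essentially the same route as the paper: identify $u = (\LL^A)^{-1} f = j\cL^{-1}j^* f$ with $f = \LL_p^A u \in \IL^p$, invoke Theorem~\ref{Elliptic regularity for DBC: Thm: Hölder through static LM}, and chain operator norms for continuity. The only cosmetic difference is that you reach the consistency $u = (\LL^A)^{-1} f$ via Lemma~\ref{Lp contractivity: Lem: Lp realization}~\ref{Lp contractivity: Lem: Lp realization, L invertible} (coercivity of $a$ gives invertible, consistent resolvent at $z=0$), whereas the paper cites Lemma~\ref{Lp contractivity: Lem: part in Lp} ($\LL_p^A$ is the part of $\LL^A$ in $\IL^p$ since $\mu$ is bounded) — both are valid and interchangeable here.
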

	
	\begin{proof}
        Since $O$ is bounded, $\LL_p^A$ is the part of $\LL^A$ in $\IL^p$, compare with Lemma~\ref{Lp contractivity: Lem: part in Lp}. Hence, if $u \in \D(\LL_p^A)$, then $u = (\LL^A)^{-1} f = j \cL^{-1} j^* f$ with $f \coloneqq \LL_p^A u \in \IL^p$. The claim follows from Theorem~\ref{Elliptic regularity for DBC: Thm: Hölder through static LM} and continuity of the respective linear operators.
	\end{proof}
	
	As pointed out in the introduction, applications to semilinear equations \cite{Disser-Meyries-Rehberg_Parabolic} require similar embedding results already for domains of fractional powers $(\LL_p^A)^{\sigma}$ with $\sigma \in (0,1)$. Fractional powers are defined by functional calculus, see e.g.\ \cite[Chap.~3]{Haase-book}, and all relevant knowledge appears in the proof below. The same goes with interpolation theory and we refer to \cite{Triebel_Int_book} for background information. 
	
	\begin{theorem} \label{Elliptic regularity for DBC: Thm: dom(L_p^{sigma}) embeds into Hölder}
		Let $\mu \in (0,1]$, $p,q$ as in \eqref{eq: Elliptic regularity for DBC, choice for p,q} and $A$ be $p$-elliptic. If $\cL^{-1} \colon \W^{-1,q}_D(O) \to \rC^{\mu}(O)$ is bounded, then
     \begin{equation*}
      \D((\LL_p^A)^{\sigma}) \sub \rC^{\kappa \mu}_D(O)
    \end{equation*}
    for all $\kappa \in (0,1)$ and $\sigma \in (\frac{1}{2} \lor ((1-\kappa)\frac{d}{p \mu + d} + \kappa), 1]$. 
	\end{theorem}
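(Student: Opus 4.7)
The plan is to combine the embedding $\D(\LL_p^A) \hookrightarrow \rC^\mu_D(O)$ from Corollary~\ref{Elliptic regularity for DBC: Cor: dom(L_p) embeds into Hölder} with complex interpolation of fractional power domains and a classical Besov-to-H\"older Sobolev embedding. The lower bound $\sigma > (1-\kappa)\frac{d}{p\mu + d} + \kappa$ will emerge as the admissibility threshold for the Sobolev step, while $\sigma > 1/2$ serves as a safety margin keeping the Dirichlet condition on $D$ visible throughout the interpolation.

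Since $\LL_p^A$ has a bounded $\H^{\infty}$-calculus by part~2 of Theorem~\ref{Intro: Thm: Bounded FC in Lp}, it has bounded imaginary powers, and the standard identification of fractional power domains with complex interpolation spaces (see e.g.\ \cite[Sec.~1.15]{Triebel_Int_book} or \cite{Haase-book}) gives
\begin{equation*}
    \D((\LL_p^A)^\sigma) = [\IL^p, \D(\LL_p^A)]_\sigma \qquad (\sigma \in (0,1))
\end{equation*}
with equivalent norms. The restriction $R\colon u\mapsto u|_O$ is trivially bounded from $\IL^p$ into $\L^p(O)$ and, by Corollary~\ref{Elliptic regularity for DBC: Cor: dom(L_p) embeds into Hölder}, from $\D(\LL_p^A)$ into $\rC^\mu_D(O)$. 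Functoriality of complex interpolation then produces a bounded map
\begin{equation*}
    R\colon \D((\LL_p^A)^\sigma) \to [\L^p(O), \rC^\mu_D(O)]_\sigma.
\end{equation*}

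I would then identify the right-hand side with (or embed it into) a Besov space. The classical Triebel interpolation on $\R^d$ gives $[\L^p(\R^d), \rB^\mu_{\infty,\infty}(\R^d)]_\sigma \hookrightarrow \rB^{\sigma\mu}_{q,q}(\R^d)$ with $\tfrac{1}{q} = \tfrac{1-\sigma}{p}$, which transfers to $O$ via the universal extension operator from Theorem~\ref{Traces: Thm: Extension to Lp}. Composing with the Sobolev embedding $\rB^{\sigma\mu}_{q,q}(O) \hookrightarrow \rC^s(O)$, valid whenever $s = \sigma\mu - d/q > 0$, places $u|_O$ in $\rC^s(O)$ for $s = \sigma\mu - (1-\sigma)d/p$. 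A direct computation yields
\begin{equation*}
    s > \kappa\mu \iff \sigma(p\mu + d) > \kappa p\mu + d \iff \sigma > (1-\kappa)\tfrac{d}{p\mu + d} + \kappa,
\end{equation*}
matching precisely the assumed lower bound on $\sigma$. The vanishing on $D$ follows, as in the proof of Theorem~\ref{Elliptic regularity for DBC: Thm: Hölder through static LM} (invoking \cite[Lem.~4.8 \& App.~B]{B-C-E_GE_vs_ER}), by approximating $u$ in the graph norm of $(\LL_p^A)^\sigma$ by elements of $\D(\LL_p^A) \sub \rC^\mu_D(O)$ and passing to the uniform limit, under which $\rC^{\kappa\mu}_D(O)$ is closed.

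The main obstacle is the Besov interpolation-and-embedding step on our locally uniform open set $O$ with possibly fractal boundary parts: on $\R^d$ it is classical Triebel theory, but transferring it to the present setting requires pairing the extension from Theorem~\ref{Traces: Thm: Extension to Lp} with a suitable retraction and verifying that the Dirichlet trace information survives the whole chain. Pinning down the sharp endpoint behaviour (in particular why the $\sigma > 1/2$ threshold suffices rather than a smaller one) is where most of the technical care will have to be invested.
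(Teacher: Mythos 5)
Your route to the interior estimate $u|_O\in\rC^{\kappa\mu}_D(O)$ is genuinely different from the paper's. The paper works entirely with real interpolation: $\D((\LL_p^A)^\sigma)\subseteq(\IL^p,\D(\LL_p^A))_{\sigma,\infty}$, then reiteration, and crucially the interpolation inequality of Lemma~\ref{Elliptic regularity for DBC: Lem: Interpolation inequality} to show directly that $\L^\infty(O)$ is of class $J(\frac{d}{p\mu+d})$ between $\L^p(O)$ and $\rC^\mu_D(O)$; this avoids any identification of $\rC^\mu_D(O)$ or the intermediate spaces with Besov scales on the rough open set $O$, which you flag yourself as the delicate piece of your argument. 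Your complex-interpolation/BIP approach is plausible, but you would indeed need to justify the retraction $\rB^{\sigma\mu}_{q,q}(\R^d)\to\rB^{\sigma\mu}_{q,q}(O)$ and the identification of $\rC^\mu_D(O)$-type spaces with Besov spaces with partially vanishing trace on locally uniform $O$; the paper's elementary Lemma~\ref{Elliptic regularity for DBC: Lem: Interpolation inequality} sidesteps all of that.

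There is, however, a genuine gap that goes beyond these technicalities, and it is exactly where your unease about the $\sigma>\nicefrac{1}{2}$ threshold comes from. Elements of $\D((\LL_p^A)^\sigma)$ are functions on the hybrid space $O\cup\Sigma$, i.e.\ pairs $u|_O\oplus u|_\Sigma\in\IL^p$. Everything you do --- restriction $R=|_O$, interpolation, Sobolev embedding --- only controls $u|_O$ and shows $u|_O\in\rC^{\kappa\mu}_D(O)$. The asserted inclusion $\D((\LL_p^A)^\sigma)\subseteq\rC^{\kappa\mu}_D(O)$ is in fact the stronger statement that $u$ is \emph{identified} with the unique continuous extension $\widetilde u$ of $u|_O$, in particular that the a priori unrelated boundary component $u|_\Sigma\in\L^p(\Sigma)$ coincides with $\widetilde u|_\Sigma=\tr(u|_O)$. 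Your proposal never addresses this; the graph-norm approximation you invoke for the vanishing on $D$ does not help, since graph-norm convergence does not track $u|_\Sigma$ against the trace of $u|_O$. The condition $\sigma>\nicefrac{1}{2}$ is not a ``safety margin for the Dirichlet condition'' but is imposed precisely so that one can prove $\D((\LL^A)^\sigma)\subseteq j(V)$ (Lemma~\ref{Elliptic regularity for DBC: Lem: Frac power domain embeds into j(V)} in the paper), which forces $u|_\Sigma=\tr(u|_O)$ and closes this gap. Without that step the proof is incomplete, as you would only establish regularity of one of the two components of $u$.
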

	
	For the proof, we modify an argument in~\cite[Sec.~3]{Haller-Meinlschmidt-Rehberg_Fractional_Powers}. We also need a new interpolation inequality and a compatibility property for domains of fractional powers, the proofs of which we postpone until the end of the section.
	
	\begin{lemma} \label{Elliptic regularity for DBC: Lem: Interpolation inequality}
		Let $p \in [1, \infty]$, $\mu \in (0,1]$ and $\theta \coloneqq \nicefrac{d}{(p \mu + d)}$. Then there is $C > 0$ depending on $d, p$ and geometry such that 
		\begin{equation*}
			\| u \|_{\L^{\infty}(O)} \leq C \| u \|_{\L^p(O)}^{1 - \theta} \| u \|_{\rC^{\mu}(O)}^{\theta} \qquad (u \in \L^p(O) \cap \rC_D^{\mu}(O)).
		\end{equation*}
		In addition, we have for all $\mu \in (0,1]$ and $\theta \in [0,1]$ that 
		\begin{equation*}
			\| u \|_{\rC^{\theta \mu}(O)} \leq 3 \| u \|_{\L^{\infty}(O)}^{1 - \theta} \| u \|_{\rC^{\mu}(O)}^{\theta} \qquad (u \in \rC^{\mu}(O)). 
		\end{equation*}
	\end{lemma}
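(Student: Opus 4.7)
The plan is to prove the two inequalities separately. The second bound is an elementary pointwise manipulation of Hölder quotients that requires no geometric input, while the first is a Gagliardo--Nirenberg-type estimate that relies on the Dirichlet vanishing of $u$ together with the interior measure density of Proposition~\ref{Traces: Prop: Degenerate ITC}; this latter ingredient is available to us through the extension property underlying Assumption~\ref{Elliptic regularity for DBC: Ass: Geometry}.

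For the second bound, abbreviate $M := \| u \|_{\L^{\infty}(O)}$ and $H := \| u \|_{\rC^{\mu}(O)}$. For $x \neq y$ in $O$ I split the quotient as
\begin{equation*}
    \frac{|u(x)-u(y)|}{|x-y|^{\theta \mu}} = \left( \frac{|u(x)-u(y)|}{|x-y|^{\mu}}\right)^{\theta} |u(x)-u(y)|^{1-\theta} \leq [u]_{\rC^{\mu}(O)}^{\theta} (2M)^{1-\theta},
\end{equation*}
which yields $[u]_{\rC^{\theta \mu}(O)} \leq 2 M^{1-\theta} H^{\theta}$. Combined with $M = M^{1-\theta} M^{\theta} \leq M^{1-\theta} H^{\theta}$, justified by $M \leq H$, summation produces the constant $3$.

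For the first bound I may assume $p < \infty$ (the case $p=\infty$ is trivial with $\theta=0$) and $M>0$. By continuity and compactness of $\overline{O}$ pick $x_0 \in \overline{O}$ with $|u(x_0)| = M$. Since the continuous extension of $u$ vanishes on $D$, choosing $\tilde x \in D$ closest to $x_0$ (which exists as $D$ is closed) gives $M \leq H \dist(x_0, D)^{\mu}$, so $\dist(x_0, D) \geq (M/H)^{1/\mu}$. Now set $r := (M/(4H))^{1/\mu}$; since $H \geq M$ we have $r \leq 1 \land \dist(x_0, D)$, and for $y \in B(x_0, r) \cap O$ a Hölder estimate yields $|u(y)| \geq M - H r^{\mu} = \tfrac{3M}{4}$. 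Proposition~\ref{Traces: Prop: Degenerate ITC} applied at $x_0 \in \overline{O} \setminus D$ gives $|B(x_0, r) \cap O| \geq c r^{d}$ with $c$ depending only on $d$ and on geometry, hence
\begin{equation*}
    \| u \|_{\L^p(O)}^p \geq \int_{B(x_0, r) \cap O} |u|^p \, \d x \geq \left( \tfrac{3M}{4}\right)^p c r^{d}.
\end{equation*}
Inserting the value of $r$ and solving for $M$ gives $M \leq C \| u \|_{\L^p(O)}^{p\mu/(p\mu+d)} H^{d/(p\mu+d)}$, which is the claim with $\theta = d/(p\mu + d)$.

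The only genuine difficulty is ensuring that the ball $B(x_0, r)$ fits within the admissible range $r < 1 \land \dist(x_0, D)$ of Proposition~\ref{Traces: Prop: Degenerate ITC} even when $x_0$ lies close to the Dirichlet part. This is precisely where the assumption $u \in \rC^{\mu}_D(O)$ is used: the vanishing of $u$ on $D$ forces $x_0$ to sit at distance at least $(M/H)^{1/\mu}$ from $D$, which is exactly the scale on which $|u|$ stays comparable to $M$. Everything else is bookkeeping once the measure density lemma is in place.
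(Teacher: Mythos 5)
Your proposal is correct, and the first inequality is argued by a variant of the paper's method that is worth comparing. The second inequality is handled by the paper via a citation to \cite[Lem.~3.5]{Haller-Meinlschmidt-Rehberg_Fractional_Powers}; your pointwise splitting of the H\"older quotient and the observation $M \leq H$ reproduce exactly the standard short proof, so there is nothing to distinguish there.

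For the first inequality, the paper fixes a radius $r = (\|u\|_p/\|u\|_{\rC^\mu})^{\theta p/d}$ independent of the base point (after a preliminary reduction to ensure $r < \nicefrac{\delta}{2}$), takes an arbitrary $x \in O$, and splits into the two cases $\overline{B(x,r)} \cap D \neq \varnothing$ (where the Dirichlet vanishing bounds $|u(x)|$ directly) and $\overline{B(x,r)} \cap D = \varnothing$ (where averaging over $O \cap B(x,r)$ and the measure-density estimate of \cite[Prop.~2.9]{Kato_Mixed} are used). You instead work at the extremal point $x_0 \in \overline{O}$ realizing $M = \|u\|_{\L^\infty(O)}$, use the $D$-vanishing to force $\dist(x_0,D) \geq (M/H)^{1/\mu}$, pick the slightly smaller $r = (M/(4H))^{1/\mu}$ so that $|u| \geq \tfrac{3M}{4}$ on $B(x_0,r) \cap O$, and invoke the paper's own Proposition~\ref{Traces: Prop: Degenerate ITC} (legitimate here, since Assumption~\ref{Elliptic regularity for DBC: Ass: Geometry} gives the extension and hence the embedding property) to bound $|O \cap B(x_0,r)|$ from below. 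The underlying ideas — $D$-vanishing controls $u$ near $D$, measure density controls $|O \cap B|$ — are the same, but your choice of the extremal $x_0$ eliminates both the case split and the preliminary reduction, and replaces the external reference \cite[Prop.~2.9]{Kato_Mixed} by the in-paper Proposition~\ref{Traces: Prop: Degenerate ITC}, making the argument more self-contained. A small bookkeeping point you handle correctly but that is worth making explicit: $M > 0$ forces $x_0 \notin D$, so $x_0 \in \overline{O} \setminus D$ and the proposition is applicable; $D \neq \varnothing$ is guaranteed by Assumption~\ref{Elliptic regularity for DBC: Ass: Geometry} since $D$ is an $\ell$-set and $V \neq \W^{1,2}_\varnothing(O)$.
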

	
	\begin{lemma}  \label{Elliptic regularity for DBC: Lem: Frac power domain embeds into j(V)}
		If $\sigma > \nicefrac{1}{2}$, then $\D((\LL^A)^\sigma) \subseteq j(V)$.
	\end{lemma}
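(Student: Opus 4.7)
The plan is to use Balakrishnan's integral representation of negative fractional powers together with the $j$-ellipticity estimate from Section~\ref{Subsec: L2 realization of L}. Since $-\LL^A$ generates a contraction semigroup on $\IL^2$, the shifted operator $L \coloneqq 1 + \LL^A$ is invertible and m-accretive with resolvent bound $\|(t+L)^{-1}\|_{\cL(\IL^2)} \leq (1+t)^{-1}$ for all $t \geq 0$. Standard functional calculus for sectorial operators~\cite{Haase-book} gives $\D(L^\sigma) = \D((\LL^A)^\sigma)$ and, for $\sigma \in (0,1)$, the representation
\begin{equation*}
    L^{-\sigma} f = \frac{\sin(\pi \sigma)}{\pi} \int_0^{\infty} t^{-\sigma} (t+L)^{-1} f \, \d t \qquad (f \in \IL^2),
\end{equation*}
with absolute convergence in $\IL^2$. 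The goal is to upgrade this to an absolutely convergent Bochner integral in $V$, transferred to $j(V)$ via the injective map $j$ (see Lemma~\ref{Traces: Lem: j(V) dense}).

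The core step is a uniform pointwise bound. Fix $t \geq 0$ and $f \in \IL^2$, set $u_t \coloneqq (t+L)^{-1} f \in \D(\LL^A) \subseteq j(V)$, and let $w_t \coloneqq j^{-1}(u_t) \in V$. From $\LL^A u_t = f - (t+1) u_t$ and the form identity defining $\LL^A$ in Section~\ref{Subsec: L2 realization of L} I compute
\begin{equation*}
    \re a(w_t, w_t) = \re \langle \LL^A u_t, u_t \rangle = \re \langle f, u_t \rangle - (t+1) \|u_t\|_{\IL^2}^2.
\end{equation*}
Combining this with the $j$-ellipticity inequality $\lambda \|w_t\|_V^2 \leq \re a(w_t, w_t) + \lambda \|u_t\|_{\IL^2}^2$ and the resolvent estimate $\|u_t\|_{\IL^2} \leq (1+t)^{-1}\|f\|_{\IL^2}$ yields a constant $C$, depending only on $\lambda$, such that
\begin{equation*}
    \|w_t\|_V \leq C (1+t)^{-\nicefrac{1}{2}} \|f\|_{\IL^2}.
\end{equation*}
Since $\int_0^{\infty} t^{-\sigma}(1+t)^{-\nicefrac{1}{2}} \, \d t$ is finite precisely when $\sigma \in (\nicefrac{1}{2}, 1)$, the integral $\int_0^{\infty} t^{-\sigma} w_t \, \d t$ converges absolutely in $V$. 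Applying the continuous map $j \colon V \to \IL^2$, I conclude
\begin{equation*}
    L^{-\sigma} f = \frac{\sin(\pi \sigma)}{\pi} \, j \!\left( \int_0^{\infty} t^{-\sigma} w_t \, \d t \right) \in j(V),
\end{equation*}
so $\D((\LL^A)^\sigma) = L^{-\sigma}(\IL^2) \subseteq j(V)$ for $\sigma \in (\nicefrac{1}{2}, 1)$. The remaining case $\sigma \geq 1$ reduces to the inclusion $\D((\LL^A)^\sigma) \subseteq \D(\LL^A) \subseteq j(V)$ which is built into the very definition of $\LL^A$.

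The main subtlety is the sharpness of the bound $\|w_t\|_V \lesssim (1+t)^{-\nicefrac{1}{2}}$: it comes from the quadratic--linear interplay in the $j$-ellipticity inequality and sits exactly at the threshold required for convergence of the Balakrishnan integral at infinity. Any weaker decay would narrow the admissible range of $\sigma$. The endpoint $\sigma = \nicefrac{1}{2}$ would require an argument in the spirit of Lions' classical $\D(L^{\nicefrac{1}{2}}) = V$ theorem, which is not pursued here and is in any case not needed for the application in Theorem~\ref{Elliptic regularity for DBC: Thm: dom(L_p^{sigma}) embeds into Hölder}.
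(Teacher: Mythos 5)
Your argument is correct, and it takes a genuinely different route from the paper's. The paper writes $(1+\LL^A)^{-\sigma}$ via the semigroup formula $\frac{1}{\Gamma(\sigma)}\int_0^\infty t^{\sigma-1}\e^{-t}\e^{-t\LL^A}f\,\d t$ and obtains the key decay $\|\e^{-t\LL^A}f|_O\|_V \lesssim t^{-\nicefrac{1}{2}}$ from coercivity of $a$ (which is in force under Assumption~\ref{Elliptic regularity for DBC: Ass: Geometry}) together with the analytic-semigroup estimate $\|\LL^A\e^{-t\LL^A}f\|_2 \lesssim t^{-1}\|f\|_2$. You instead use Balakrishnan's resolvent representation and derive the threshold decay $\|w_t\|_V \lesssim (1+t)^{-\nicefrac{1}{2}}$ purely from the $j$-ellipticity inequality and the resolvent bound $\|(t+L)^{-1}\|\leq (1+t)^{-1}$, without invoking coercivity or analyticity of the semigroup; in both cases the decay exponent $\nicefrac{1}{2}$ is exactly what makes the integral converge precisely for $\sigma>\nicefrac{1}{2}$. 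Your version is marginally more elementary and shows the lemma holds without the standing geometric assumptions of Section~\ref{Sec: Elliptic regularity for DBC}, at the cost of a short case distinction (implicit in your ``depending only on $\lambda$'') when bounding $(\lambda-t-1)\|u_t\|^2$ for $\lambda>1$, and a separate (standard) reduction for $\sigma\geq 1$, whereas the paper's integral formula covers all $\sigma>\nicefrac{1}{2}$ at once through the factor $\e^{-t}$.
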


	\begin{proof}[\rm\bf Proof of Theorem~\ref{Elliptic regularity for DBC: Thm: dom(L_p^{sigma}) embeds into Hölder}.]
		We obtain from \cite[Thm.~1.15.2]{Triebel_Int_book} that
		\begin{align*}
			\D((\LL_p^A)^{\sigma}) &\sub (\IL^p, \D(\LL_p^A))_{\sigma, \infty}.
			\intertext{Since $\sigma > (1 - \kappa) \frac{d}{p \mu + d} + \kappa$, \cite[Thm.~1.3.3~(b) \& (e)]{Triebel_Int_book} entails}
			&\sub (\IL^p, \D(\LL_p^A))_{(1- \kappa) \frac{d}{p \mu + d} + \kappa, 1}.
			\intertext{By the reiteration theorem \cite[Thm.~1.10.2]{Triebel_Int_book} the latter coincides with}
			&= ((\IL^p, \D(\LL_p^A))_{\frac{d}{p \mu + d},1}, \D(\LL_p^A))_{\kappa, 1}.
        \end{align*}
        Now, we apply the bounded operator $|_O$ on both sides and conclude 
        \begin{align*}
			\D((\LL_p^A)^{\sigma})|_O &= ((\L^p(O), \D(\LL_p^A)|_O)_{\frac{d}{p \mu +d},1}, \D(\LL_p^A)|_O)_{\kappa, 1}
             \\&\sub ((\L^p(O), \rC_D^{\mu}(O))_{\frac{d}{p \mu +d},1}, \rC_D^{\mu}(O))_{\kappa, 1},
        \end{align*}
        where the second step is just Corollary~\ref{Elliptic regularity for DBC: Cor: dom(L_p) embeds into Hölder}. In the language of \cite[Lem.~1.10.1~(a)]{Triebel_Int_book}, the first statement of Lemma~\ref{Elliptic regularity for DBC: Lem: Interpolation inequality} means  that $\L^{\infty}(O)$ is of class $J(\nicefrac{d}{(p \mu + d)})$ between $\L^p(O)$ and $\rC_D^{\mu}(O)$ and hence contains $(\L^p(O), \rC_D^{\mu}(O))_{\nicefrac{d}{(p \mu + d)}, 1}$ with continuous inclusion. A similar reformulation applies to the second statement of the lemma. We deduce 
		\begin{equation*}
			\D((\LL_p^A)^{\sigma})|_O \sub (\L^{\infty}(O), \rC_D^{\mu}(O))_{\kappa,1} \sub \rC_D^{\kappa \mu}(O).
		\end{equation*}
		Now, let $u \in \D((\LL_p^A)^{\sigma})$. We already know that $u|_O \in \rC^{\kappa \mu}_D(O)$. Let $\widetilde{u} \in \rC^{\kappa \mu}_D(O)$ be its unique continuous extension. We have $\D((\LL_p^A)^{\sigma}) \sub \D((\LL^A)^{\sigma})$, see \cite[Prop.~2.6.5~b)]{Haase-book}, and therefore Lemma~\ref{Elliptic regularity for DBC: Lem: Frac power domain embeds into j(V)} yields $u \in j(V)$. We conclude that $u|_{\Sigma} = \tr(u|_O) = \widetilde{u}|_{\Sigma}$ and hence $u = \widetilde{u} \in \rC^{\kappa \mu}_D(O)$.
	\end{proof}
	
	\begin{proof}[\rm\bf Proof of Lemma~\ref{Elliptic regularity for DBC: Lem: Interpolation inequality}.]
		The second assertion has been proven in \cite[Lem.~3.5]{Haller-Meinlschmidt-Rehberg_Fractional_Powers}. Let us turn to the first part. To this end, we fix $\delta \in (0,1]$ as in the definition of a locally uniform domain near $\partial O \setminus D$, see \cite[Def.~2.3]{Kato_Mixed}. If $\| u \|_{\L^p(O)} \geq (\frac{\delta}{2})^{\nicefrac{d}{\theta p}} \| u \|_{\rC^{\mu}(O)}$, then we simply estimate 
		\begin{equation*}
			\| u \|_{\L^{\infty}(O)} = \| u \|_{\L^{\infty}(O)}^{1 - \theta} \| u \|_{\L^{\infty}(O)}^{\theta} \leq (\tfrac{2}{\delta})^{\frac{d(1 - \theta)}{\theta p}} \| u \|_{\L^p(O)}^{1 - \theta} \| u \|_{\rC^{\mu}(O)}^{\theta}.  
		\end{equation*}
		Hence, we assume from now on that $\| u \|_{\L^p(O)} < (\frac{\delta}{2})^{\nicefrac{d}{\theta p}}  \| u \|_{\rC^{\mu}(O)}$ and set 
		\begin{equation*}
			r \coloneqq (\| u \|_{\L^p(O)} \| u \|_{\rC^{\mu}(O)}^{-1})^{\frac{\theta p}{d}} \in (0,\tfrac{\delta}{2}). 
		\end{equation*} 
		Let $x \in O$. We consider two cases. 
		
		\textbf{(1) $\boldsymbol{\overline{B(x,r)} \cap D \neq \varnothing}$.} We fix $x_D \in \overline{B(x,r)} \cap D$. Since $u$ vanishes in $D$ we get 
		\begin{equation*}
			|u(x)| = |u(x) - u(x_D)| \leq r^{\mu} \| u \|_{\rC^{\mu}(O)} \leq \| u \|_{\L^p(O)}^{1 - \theta} \| u \|_{\rC^{\mu}(O)}^{\theta}. 
		\end{equation*}
		\textbf{(2)  $\boldsymbol{\overline{B(x,r)} \cap D = \varnothing}$.} Then $\overline{B(x,r)} \sub O$ or $\overline{B(x,r)} \cap (\partial O \setminus D) \neq \varnothing$ and we obtain for all $y \in O \cap B(x,r)$ that 
		\begin{equation*}
			|u(x)| \leq |u(x) - u(y)| + |u(y)| \leq r^{\mu} \| u \|_{\rC^{\mu}(O)} + |u(y)|. 
		\end{equation*}
		We average the latter with respect to $y \in O \cap B(x,r)$ and use Hölder's inequality, which leads us to 
		\begin{equation*}
			|u(x)| \leq r^{\mu} \| u \|_{\rC^{\mu}(O)} + |O \cap B(x,r) |^{- \frac{1}{p}} \| u \|_{\L^p(O)}. 
		\end{equation*}
		If $\overline{B(x,r)} \sub O$, then $|O \cap B(x,r)| = |B(0,1)| r^d$. In the other case, $\dist(x, \partial O \setminus D) \leq r < (\nicefrac{\delta}{2} \land 1)$. It follows from \cite[Prop.~2.9]{Kato_Mixed} that there is $C > 0$ depending on $d$ and geometry such that $|O \cap B(x,r)| \geq C r^d$. We conclude that  
		\begin{equation*}
			\| u \|_{\L^{\infty}(O)} \leq (1 + C^{- \frac{1}{p}}) \| u \|_{\L^p(O)}^{1 - \theta}  \| u \|_{\rC^{\mu}(O)}^{\theta}.  \qedhere 
		\end{equation*}
	\end{proof}
	
	\begin{proof}[\rm\bf Proof of Lemma~\ref{Elliptic regularity for DBC: Lem: Frac power domain embeds into j(V)}.]
		By \cite[Prop.~3.1.9~a)]{Haase-book} we have $\D((\LL^A)^{\sigma}) = \D((1 + \LL^A)^{\sigma})$. Since $1 + \LL^A$ is invertible, so are its fractional powers \cite[Prop.~3.1.1~e)]{Haase-book}. We let $u \in \D((1 + \LL^A)^{\sigma})$ and put $f \coloneqq (1 + \LL^A)^{\sigma} u$. We have the following formula \cite[Cor.~3.3.6]{Haase-book}: 
		\begin{equation*}
			u = (1 + \LL^A)^{-\sigma} f = \frac{1}{\Gamma(\sigma)} \int_0^{\infty} t^{\sigma} \e^{-t} f_t^A \, \frac{\d t}{t} = \frac{1}{\Gamma(\sigma)} \int_0^{\infty} t^{\sigma} \e^{-t} j( (f_t^A)|_O )\, \frac{\d t}{t},  
		\end{equation*} 
        where in the last step we have used that $f_t^A \in \D(\LL^A) \sub j(V)$. Since $j \colon V \to \IL^2$ is bounded, the assertion follows once we have checked that 
		\begin{equation*}
			\int_0^{\infty} t^{\sigma} \e^{-t} (f_t^A)|_O \, \frac{\d t}{t}
		\end{equation*}
		converges absolutely in $V$. Due to the exponential decay and the assumption $\sigma > \nicefrac{1}{2}$, it suffices to show that there is $C > 0$ such that $\| f_t^A|_O \|_V \leq C t^{-\nicefrac{1}{2}}$ for all $t > 0$. In order to see this, we use the coercivity of $a$ and the Cauchy--Schwarz inequality and get 
		\begin{equation*}
			 C \| f_t^A|_O \|_V^2 \leq \re a(f_t^A|_O, f_t^A|_O) = \re \langle \LL^A f_t^A, f_t^A \rangle_2 \leq \| \LL^A f_t^A \|_2 \| f_t^A \|_2. 
		\end{equation*}
		Since $(\e^{-t \LL^A})_{t \geq 0}$ is a bounded analytic semigroup in $\IL^2$, the right-hand side is bounded from above by $C t^{-1} \| f \|_2^2$. This completes the proof. 
	\end{proof}
	
	\section{Explicit angle} \label{Sec: Explicit angle}
	
	We close this paper by elaborating a quantitative lower bound for $\theta_p$, depending on $p$ and the data $\Lambda(A)$, $\lambda(A)$ and $\omega(A)$. Under additional symmetry assumptions on~$A$, results of this type already exist, see Remark~\ref{Explicit angle: Rem: Known results}. For $p \in (1, \infty) $ we set
	\begin{equation*}
		\sigma_p \coloneqq \frac{|p-2|}{2 \sqrt{p-1}}
	\end{equation*}
    and note that $\sigma_p = \sigma_{p'}$. We will use the fact that (quantitative) smallness of $\| \im(A) \|_{\infty}$ implies $p$-ellipticity of $A$. 

    \begin{lemma}[{\cite[Lem.~2.16]{P-ellipticity-Moritz-Counterpart}}]
        \label{Explicit lower bound for theta_p: Lem: Criterion}
        Let $p \in (2, \infty)$ and assume that $\sigma_p \| \im(A) \|_{\infty} < \lambda(A)$. Then $A$ is $p$-elliptic.
    \end{lemma}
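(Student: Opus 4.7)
The plan is to find a clever algebraic rearrangement of the $p$-ellipticity integrand that splits it into a full copy of the classical ellipticity form plus an elementary residual. Setting $c := 2/p$, the conjugation identity $\overline{\xi + (1-c)\overline{\xi}} = \overline{\xi} + (1-c)\xi$ together with $\overline{\xi} + (1-c)\xi = c\overline{\xi} + 2(1-c)\alpha$, where $\xi = \alpha + i\beta$ with $\alpha, \beta \in \R^d$ real, gives
\begin{equation*}
F(\xi) := \re\bigl(A\xi \cdot \overline{\xi + (1-c)\overline{\xi}}\bigr) = c\re(A\xi\cdot\overline{\xi}) + 2(1-c)\re(A\xi\cdot\alpha).
\end{equation*}
Writing $A = R + iI$ with real matrices $R = \re A$, $I = \im A$, a direct expansion of $A\xi = (R\alpha - I\beta) + i(R\beta + I\alpha)$ shows $\re(A\xi\cdot\alpha) = R\alpha\cdot\alpha - I\beta\cdot\alpha$.

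Next I invoke ellipticity in two ways. Applied to the complex unit vector $\xi$, it yields $\re(A\xi\cdot\overline{\xi}) \geq \lambda(A)$; applied to the real vector $\alpha$, it yields $R\alpha\cdot\alpha = \re(A\alpha\cdot\overline{\alpha}) \geq \lambda(A)|\alpha|^2$. Writing $u := |\alpha|$, $v := |\beta|$, $\lambda := \lambda(A)$, $N := \|\im A\|_\infty$ and combining with the trivial Cauchy--Schwarz bound $|I\beta\cdot\alpha| \leq Nuv$ gives
\begin{equation*}
F(\xi) \geq c\lambda + 2(1-c)\bigl(\lambda u^2 - Nuv\bigr).
\end{equation*}
A double-angle parametrization $u = \cos\phi$, $v = \sin\phi$ with $\phi \in [0, \pi/2]$ shows that the residual quadratic form $\lambda u^2 - Nuv$ attains its minimum $(\lambda - \sqrt{\lambda^2 + N^2})/2$ on the arc $\{u^2+v^2=1,\, u,v\geq 0\}$. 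Substituting back and simplifying yields the uniform lower bound
\begin{equation*}
F(\xi) \geq \lambda - (1-c)\sqrt{\lambda^2 + N^2}.
\end{equation*}
Squaring the inequality $\lambda > (1-c)\sqrt{\lambda^2 + N^2}$ converts it into $\lambda^2 c(2-c) > (1-c)^2 N^2$, and noting the algebraic identity $\sigma_p = (1-c)/\sqrt{c(2-c)} = (p-2)/(2\sqrt{p-1})$, this is exactly the hypothesis $\sigma_p N < \lambda$. Taking the essential infimum in $x$ then delivers $\Delta_p(A) > 0$.

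The main obstacle is identifying the correct algebraic split. A naive route expands $F$ directly and controls the imaginary cross-term as $|cI\alpha\cdot\beta - (2-c)I\beta\cdot\alpha| \leq 2Nuv$; this only yields the sufficient condition $N < \lambda\sqrt{c(2-c)}$, weaker by a factor $(1-c)$ than the desired $\sigma_p N < \lambda$. The refinement comes from first extracting a full copy of $\re(A\xi\cdot\overline\xi)$ (with the key observation $\overline\xi + \xi = 2\alpha$), so that the residual carries only the real vector $\alpha$ and can be handled by an elementary one-parameter optimization rather than a coordinate-wise Cauchy--Schwarz estimate.
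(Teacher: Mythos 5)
Your proof is correct. A few checks: the identity $\overline{\xi}+(1-c)\xi = c\overline{\xi}+2(1-c)\alpha$ follows from $\overline{\xi}+\xi=2\alpha$; the decomposition $F(\xi)=c\,\re(A\xi\cdot\overline\xi)+2(1-c)\,(R\alpha\cdot\alpha - I\beta\cdot\alpha)$ is exact; both invocations of ellipticity (for the complex unit vector $\xi$ and for the real vector $\alpha$) are legitimate, the latter by quadratic homogeneity; the one-dimensional minimization over the arc $\{u^2+v^2=1,\ u,v\ge 0\}$ does achieve the unconstrained minimum $(\lambda-\sqrt{\lambda^2+N^2})/2$ because the minimizing angle lies in $[0,\pi/2]$ once $N>0$ (and the case $N=0$ is trivial); and the reduction $\lambda>(1-c)\sqrt{\lambda^2+N^2}\Leftrightarrow\lambda^2 c(2-c)>(1-c)^2 N^2\Leftrightarrow\lambda>\sigma_p N$ is valid since all quantities involved are nonnegative and one uses $(1-c)^2+c(2-c)=1$. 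The resulting bound $\Delta_p(A)\ge\lambda(A)-(1-\tfrac{2}{p})\sqrt{\lambda(A)^2+\|\im A\|_\infty^2}>0$ is uniform in $x$ and in the unit sphere, which is exactly $p$-ellipticity.

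Note that the paper itself offers no argument here: it cites \cite[Lem.~2.16]{P-ellipticity-Moritz-Counterpart}. So yours is a self-contained alternative. Your scheme --- peel off a full multiple of the Hermitian ellipticity form $c\,\re(A\xi\cdot\overline\xi)$ so that the residual involves only the real vector $\alpha$, then optimize a scalar quadratic form on a quarter-circle --- is clean and, as you point out, strictly sharper than the naive route of bounding the mixed imaginary terms coordinate-wise by $2Nuv$, which would only give $N<\lambda\sqrt{c(2-c)}$ instead of $\sigma_p N<\lambda$. One small stylistic remark: you might state the final quantitative lower bound for $\Delta_p(A)$ explicitly, since it is strictly stronger than the qualitative assertion of the lemma and costs nothing.
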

 
 Hence, our goal is to find the largest possible $\theta$ such that $\e^{\pm \i \theta} A$ satisfies this smallness assumption. The method cannot be optimal, because the same sesquilinear form $a$ may be represented by a variety of matrices $A$ with different size of $\| \im(A) \|_{\infty}$, see \cite[Ex.~1]{CM_p-elliptic}. On the other hand, it does not require any symmetry for $A$.
	
	\begin{proposition} \label{Explicit lower bound for theta_p: Prop: Main result}
		Let $p \in (1, \infty)$ and assume that $\sigma_p \| \im(A) \|_{\infty} < \lambda(A)$. Then
		\begin{equation*}
			\tan(\theta_p) \geq \frac{\lambda(A) - \sigma_p \| \im(A) \|_{\infty}}{\lambda(A) \tan(\omega(A)) + \sigma_p \| \re(A) \|_{\infty}}.
		\end{equation*}
	\end{proposition}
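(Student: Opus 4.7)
The plan is to apply Lemma~\ref{Explicit lower bound for theta_p: Lem: Criterion} to the rotated coefficient $A_\theta \coloneqq \e^{\i \theta} A$ for $\theta$ in the largest possible range, and symmetrically to $\e^{-\i \theta} A$. By the definition \eqref{eq: Intro: theta_p} of $\theta_p$, it suffices to exhibit an interval $[0,\theta_0)$ on which $A_\theta$ is $p$-elliptic, in which case $\theta_p \geq \theta_0$. For the case $p\in(1,2)$ one reduces to $p'\in(2,\infty)$ using Lemma~\ref{p-ellipticity: Lem: Basics}~\ref{p-ellipticity: Lem: Basics, duality in p}, after noting that $\sigma_p = \sigma_{p'}$.

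First, I would compute the real/imaginary decomposition of the rotation,
\begin{equation*}
	\re(A_\theta) = \cos\theta \, \re(A) - \sin\theta \, \im(A), \quad \im(A_\theta) = \sin\theta \, \re(A) + \cos\theta \, \im(A),
\end{equation*}
which by the triangle inequality immediately yields
\begin{equation*}
	\| \im(A_\theta) \|_\infty \leq \sin\theta \, \| \re(A) \|_\infty + \cos\theta \, \| \im(A) \|_\infty.
\end{equation*}
Next, for the lower bound on $\lambda(A_\theta)$, fix $x \in O$ and a unit vector $\xi$ and set $z \coloneqq A(x)\xi \cdot \overline{\xi}$. By definition of $\lambda(A)$ and $\omega(A)$, one has $\re z \geq \lambda(A)$ and $|\im z| \leq \tan(\omega(A)) \re z$. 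Therefore
\begin{equation*}
	\re(\e^{\i \theta} z) = \cos\theta \, \re z - \sin\theta \, \im z \geq \re z \, \bigl(\cos\theta - \sin\theta \tan(\omega(A))\bigr),
\end{equation*}
which, as long as $\theta \in [0,\nicefrac{\pi}{2}-\omega(A))$ makes the bracket non-negative, gives $\lambda(A_\theta) \geq \lambda(A)(\cos\theta - \sin\theta \tan(\omega(A)))$.

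Finally, plugging both bounds into the hypothesis $\sigma_p \| \im(A_\theta) \|_\infty < \lambda(A_\theta)$ of Lemma~\ref{Explicit lower bound for theta_p: Lem: Criterion} leads to
\begin{equation*}
	\sigma_p\bigl(\sin\theta \| \re(A) \|_\infty + \cos\theta \| \im(A) \|_\infty\bigr) < \lambda(A)\bigl(\cos\theta - \sin\theta \tan(\omega(A))\bigr),
\end{equation*}
and dividing by $\cos\theta$ rearranges exactly to
\begin{equation*}
	\tan\theta < \frac{\lambda(A) - \sigma_p \| \im(A) \|_\infty}{\lambda(A) \tan(\omega(A)) + \sigma_p \| \re(A) \|_\infty}.
\end{equation*}
The numerator on the right is positive by the standing assumption $\sigma_p \| \im(A) \|_\infty < \lambda(A)$, so this sup is strictly positive. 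The same chain of inequalities applies to $\e^{-\i \theta} A$ because the upper bound for $\|\im\|_\infty$ and the lower bound for $\lambda$ depend only on $|\theta|$. This yields the asserted lower bound for $\tan(\theta_p)$. The only delicate point in carrying this out is ensuring the correct sign of the bracket $\cos\theta - \sin\theta \tan(\omega(A))$ so that the estimate on $\lambda(A_\theta)$ remains meaningful; but this is automatic in the relevant range since the claimed $\arctan$ is bounded above by $\nicefrac{\pi}{2}-\omega(A)$, as one checks directly from the formula.
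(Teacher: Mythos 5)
Your proposal is correct and follows essentially the same path as the paper: the same appeal to Lemma~\ref{Explicit lower bound for theta_p: Lem: Criterion}, the same triangle-inequality bound on $\|\im(\e^{\i\theta}A)\|_\infty$, the same lower bound $\lambda(\e^{\i\theta}A)\geq\lambda(A)(\cos\theta-\sin\theta\tan\omega(A))$ (you derive it algebraically from $|\im z|\leq\tan(\omega(A))\re z$ where the paper phrases it as rotating a sector, but these are the same inequality), and the same rearrangement after dividing by $\cos\theta$. The only cosmetic difference is the treatment of $\e^{-\i\theta}A$: you observe directly that both bounds depend only on $|\theta|$ via the two-sided bound $|\im z|\leq\tan(\omega(A))\re z$, whereas the paper routes through $A^*$ and Lemma~\ref{p-ellipticity: Lem: Basics}~\ref{p-ellipticity: Lem: Basics, duality in A}; both are valid.
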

	
	\begin{proof}
		By symmetry and Lemma~\ref{p-ellipticity: Lem: Basics}~\ref{p-ellipticity: Lem: Basics, duality in p} we can assume $p>2$. We fix $\theta \in (0, \nicefrac{\pi}{2}-\omega(A))$. We need to prove $\Delta_p(\e^{\pm \i \theta} A) > 0$ provided that
        \begin{equation} \label{eq: Lower bound for theta_p}
			\tan(\theta) < \frac{\lambda(A) - \sigma_p \| \im(A) \|_{\infty}}{\lambda(A) \tan(\omega(A)) + \sigma_p \| \re(A) \|_{\infty}}.
	\end{equation}
  
  Lemma~\ref{Explicit lower bound for theta_p: Lem: Criterion} says that $\Delta_p(\e^{\i \theta} A)>0$ if
		\begin{equation*}
			\sigma_p \| \im(\e^{\i \theta} A ) \|_{\infty} < \lambda(\e^{\i \theta} A).
		\end{equation*}
		We have the rough estimate
		\begin{equation*}
			\| \im(\e^{\i \theta} A) \| = \|  \sin(\theta) \re(A) + \cos(\theta) \im(A) \|_{\infty} \leq \sin(\theta) \| \re(A) \|_{\infty} + \cos(\theta) \| \im(A) \|_{\infty}
		\end{equation*}
		and, by rotating the set $\{z\in \C \colon |\arg(z)| \leq \omega(A) \, \& \, \re(z) \geq \lambda\}$ and finding the point with smallest real part, we get
		\begin{equation*}
			\lambda(\e^{\i \theta} A) \geq \lambda(A) \frac{\cos(\omega(A) + \theta)}{\cos(\omega(A))} = \lambda(A) \big(\cos(\theta) - \sin(\theta) \tan(\omega(A)) \big).
		\end{equation*}
		So, a sufficient criterion for $\Delta_p(\e^{\i \theta} A)>0$ is
		\begin{equation*}
			\sigma_p \big( \sin(\theta) \| \re(A) \|_{\infty} + \cos(\theta) \| \im(A) \|_{\infty} \big) < \lambda(A) \big( \cos(\theta) - \sin(\theta) \tan(\omega(A)) \big).
		\end{equation*}
		We divide both sides by $\cos(\theta) > 0$ and rearrange terms to get
		\begin{equation*}
			\tan(\theta) \big( \lambda(A) \tan(\omega(A)) + \sigma_p \| \re(A) \|_{\infty} \big) < \lambda(A) - \sigma_p \| \im(A) \|_{\infty},
		\end{equation*}
		which is equivalent to \eqref{eq: Lower bound for theta_p}. 
  
    Since the right-hand side in \eqref{eq: Lower bound for theta_p} stays the same when replacing $A$ by $A^*$, the restriction on $\theta$ in \eqref{eq: Lower bound for theta_p} also implies $\Delta_p(\e^{\i \theta} A^*) > 0$ and hence $\Delta_p(\e^{- \i \theta} A) > 0$, see Lemma~\ref{p-ellipticity: Lem: Basics}~\ref{p-ellipticity: Lem: Basics, duality in A}. This completes the proof.
	\end{proof}
	
	\begin{remark} \label{Explicit angle: Rem: Known results}
		The following results are known from the literature.
		
		\begin{enumerate}
			\item For real-valued $A$, \cite[Thm.~3.4]{Chill_Lp-angle} gives an upper bound for the angle of the numerical range of $\LL_p^A$. This leads to the better bound
			\begin{equation*}
				\tan(\theta_p) \geq \frac{1}{\sqrt{\sigma_p^2 + \frac{p^2}{4 (p-1)} \tan(\omega(A))^2}}.
			\end{equation*}
			The proof uses the fact that the coefficients are real and there is no simple adaptation in the complex case.
			
			\item Cialdea and Maz'ya characterize in \cite[Thm.~9]{CM_optimal angle} the optimal angle of the $\L^p$-dissipativity under Dirichlet boundary conditions by an implicit formula. For symmetric $A$ (that is $A = A^t$ in their terminology) the expression simplifies, see \cite[Thm.~1]{CM_optimal angle}.
			
			\item If $\im(A)$ is symmetric and $|1 - \nicefrac{2}{p}| < \cos(\omega(A))$, Do found in \cite[Thm.~1.1]{Do_p-ellipticity} a lower bound for $\theta_p$ in terms of $\omega(A)$ and $p$:
			\begin{align*}
				\theta_p &\geq \arccos(|1 - \nicefrac{2}{p}|) - \omega(A) \qquad &(\re(A) \; \text{symmetric}), \\
				\tan(\theta_p) &\geq \frac{1 - \tan(\omega(A)) \sigma_p}{\frac{p - \sqrt{p-1}}{\sqrt{p-1}} \tan(\omega(A)) + \sigma_p} \qquad &(\re(A) \; \text{not symmetric}).
			\end{align*}
			This bound does not recover the bound in (i) when $A$ is real.
		\end{enumerate}
	\end{remark}

\end{document}